\documentclass[reqno, 11pt]{amsart}

\setlength{\oddsidemargin}{0mm}
\setlength{\evensidemargin}{0mm} \setlength{\topmargin}{0mm}
\setlength{\textheight}{220mm} \setlength{\textwidth}{155mm}
\usepackage[]{hyperref}

\usepackage{bbm}
\usepackage{url}
\usepackage{stmaryrd}
\usepackage{mathrsfs}
\usepackage{cases}
\usepackage{amsfonts}
\usepackage{amsmath,amssymb,bm}
\usepackage{enumerate}
\usepackage{tikz}
\usepackage{extarrows}

\usepackage{citeref}

\allowdisplaybreaks[4]
\newtheorem{theorem}{Theorem}[section]
\newtheorem{lemma}[theorem]{Lemma}
\newtheorem{proposition}[theorem]{Proposition}

\theoremstyle{definition}
\newtheorem{definition}[theorem]{Definition}

\newtheorem{remark}[theorem]{Remark}
\numberwithin{equation}{section}

\let\a=\alpha \let\al=\alpha
\let\b=\beta

\let\g=\gamma
\let\G=\Gamma 
\let\i=\infty 
\let\fy=\infty


\let\r=\rho


\let\ep=\epsilon
\let\la=\lambda
\let\La=\Lambda

\let\Om=\Omega

\let\th=\theta

\let\wt=\widetilde
\let\va=\varphi

\let\vph=\varphi

\def\bbR{\mathbb{R}}

\def\bbZ{\mathbb{Z}}  \def\zn{\mathbb{Z}^n}
\def\bbN{\mathbb{N}}

\def\scrF{\mathscr{F}}

\def\calC{\mathcal {C}}

\def\calF{\mathcal {F}}
\def\calG{\mathcal {G}}

\def\calK{\mathcal {K}}
\def\calL{\mathcal {L}}
\def\calM{\mathcal {M}}

\def\calS{\mathcal {S}}

\newcommand{\be}{\begin{equation*}} 	\newcommand{\ee}{\end{equation*}}
\newcommand{\ben}{\begin{equation}} 	\newcommand{\een}{\end{equation}}
\newcommand{\bn}{\begin{enumerate}} 	\newcommand{\en}{\end{enumerate}}
\newcommand{\bs}{\backslash}
\newcommand{\lan}{\langle}          \newcommand{\ran}{\rangle}
\def\mp{M^p}    \def\mpd{M^p(\rd)}    
    \def\mqd{M^q(\rd)}
\def\mpq{M^{p,q}}   \def\mpqd{M^{p,q}(\rd)}

          \def\mid{M^{1}(\rd)}
     \def\mfd{M^{\infty}(\rd)}

  \def\lpd{L^p(\rd)}

\def\bka{\Box_k^{\al}}

         \def\mpad{M^p_{\al}(\rd)}
         \def\mqad{M^q_{\al}(\rd)}
    \def\mpqad{M^{p,q}_{\al}(\rd)}

\def\zd{{{\mathbb{Z}}^d}}   \def\zdd{{{\mathbb{Z}}^{2d}}}
\def\zn{{{\mathbb{Z}}^n}}   

\def\rr{{\mathbb R}}
\def\rd{{{\rr}^d}}          \def\rdd{{{\rr}^{2d}}}
          

\begin{document}
\title[Boundedness and compactness kernel theorem for $\alpha$-modulation spaces]{Boundedness and compactness kernel theorem for $\alpha$-modulation spaces}
\author{GUOPING ZHAO}
\address{School of Mathematics and Statistics, Xiamen University of Technology, Xiamen, 361024, P.R.China}
\email{guopingzhaomath@gmail.com}
\author{WEICHAO GUO$^*$}
\address{School of Science, Jimei University, Xiamen, 361021, P.R.China}
\email{weichaoguomath@gmail.com}
\thanks{$^*$The corresponding author}
\subjclass[2010]{42B35,42C15,47G30}
\keywords{Kernel theorem, $\alpha$-modulation space, boundedness, compactness}

\begin{abstract}
  This paper is devoted to establishing the kernel theorems for $\al$-modulation spaces
in terms of boundedness and compactness.
We characterize the boundedness of a linear operator $A$ from an $\al$-modulation space $\mpad$
into another $\al$-modulation space $\mqad$, by the membership of its distributional kernel in mixed $\al$-modulation spaces. 
We also characterize the compactness of $A$ by means of the kernel in a certain closed subspace of mixed $\al$-modulation spaces. 
The proofs are based on the viewpoint that the action of the linear operator on certain function space can be reduced to the action on the suitable atoms of this function space.
\end{abstract}

\maketitle

\section{Introduction}
The famous Schwartz kernel theorem \cite[Th.5.2.1]{Hoermander1983} states that the tempered distributions $K\in\calS'(\rdd)$ can correspond one-to-one with the linear operators 
$A\in \calL(\calS(\rd), \calS'(\rd))$ by
\ben\label{intro-1}
 \langle Af, g \rangle
 =
 \langle K, g\otimes\bar{f} \rangle,\ \ \ \  f,g\in \calS(\rd),
\een
where $K$ is known as the distributional kernel of $A$. Formally, we can write
\be
Af(x)=\int_{\rd}K(x,y)f(y)dy,
\ee
where the integral makes sense for nice distributional kerenl $K$ and test function $f$.
This corresponding relation is one of the most important fundamental tools in the modern functional analysis.
In the field of time-frequency analysis, Feichtinger's kernel theorem was announced in \cite{Feichtinger1980CRHdSdldSSA} and proved in \cite{FeichtingerGroChenig1997JoFA}.
It gives a one-to-one correspondence as  \eqref{intro-1}
between linear operators on $\calL(M^1(\rd), M^{\fy}(\rd))$ and the elements in modulation space $M^{\fy}(\rdd)$.
Note that in Feichtinger's kernel theorem, both the test function space $M^1(\rd)$ and the distribution space $M^{\fy}(\rd)=(M^1(\rd))^*$ are Banach spaces.
Therefore, this theorem has technical superiority over the classical Schwartz kernel theorem in dealing with problems in the field of time-frequency analysis.

Modulation space is a class of important function spaces in the field of time-frequency analysis.
It was invented by Feichtinger \cite{Feichtinger1983} in 1983.
The (unweighted) modulation space $\mpqd$ is defined by measuring the decay and integrability of the short time Fourier transform (STFT) as follows:
\be
M^{p,q}(\rd)=\{f\in \calS'(\rd): V_gf\in L^{p,q}(\rdd) \},
\ee
where $V_gf$ stands for the STFT of $f$ associated with the window function $g\in \calS(\rd)$.
For simplicity, we write $M^p(\rd)=M^{p,p}(\rd)$.  
From the known embedding relation $M^1(\rd)\subset \mpqd\subset \mfd$ and the duality relation $M^{\fy}(\rd)=(M^1(\rd))^*$,
the modulation spaces $M^1(\rd)$ and $M^{\fy}(\rd)$ can be regarded as the test function space and the distribution space of the function spaces $\mpq$ in the range of $1\leq p,q\leq \fy$, respectively.
We refer to Section 2 for more details about modulation spaces. We use $\calM^{p,q}(\rd)$ to denote the $\calS(\rd)$ closure in $M^{p,q}(\rd)$.

Beyond the traditional framework of kernel theorems from the test function space $X$ into the distribution space $X^*$, 
Cordero and Nicola \cite{CorderoNicola2019JFAA} 
proved a generalized kernel theorem associated with the linear operators belonging to $\calL(\mid,M^p(\rd))$ and $\calL(M^p(\rd),\mfd)$ respectively.
One can also see \cite{BalazsGroechenigSpeckbacher2019TotAMSSB} for a more abstract version of kernel theorems on several coorbit spaces.

Recall that the classical Besov space $B^{p,q}(\rd)$ is a class of frequency decomposition spaces with dyadic decomposition.
By using an equivalent norm (see \cite{Triebel1983ZFA}), the modulation space can be compared with the Besov space
and can be considered as 
the Besov type space associated with uniform decomposition on the frequency domain.
As an intermediate decomposition between the uniform and dyadic decompositions,
the $\a$-covering was first introduced by Feichtinger \cite{FeichtingerGroebner1985MN, Feichtinger1987MN}.
Then based on the $\alpha$-covering, Gr\"{o}bner \cite{Grobner1992} introduced
the $\alpha $-modulation spaces $M^{p,q}_{\alpha}(\rd)$ for $\alpha \in \lbrack 0,1)$.

Accordingly, the $\a$-modulation space (see the definition in Section 2), generated by the $\a$-covering, can be considered formally as the intermediate space between modulation and Besov spaces.
More precisely,
the space $\mpqad$
coincides with the modulation space $\mpq$ when $\alpha=0$, and
the Besov space $\ B^{p,q}(\rd)$ can be regarded as the limit case of
$\mpqad$ as $\alpha\rightarrow 1 $ (see \cite{Grobner1992}). For the sake of convenience, we
can view the Besov space as a special $\alpha$-modulation space.

As mentioned above, the $\al$-modulation space is a class of more generalized function spaces that bridges the modulation and Besov spaces.
A natural question is whether the kernel theorem can be established in the framework of $\al$-modulation spaces, and if so, what form this kernel theorem should be.
In order to examine this issue, we first review the main method used in the existing kernel theorems for modulation spaces.

The main idea of the previous results is to establish an equivalent relation by  using the
(continuous or discrete) short-time Fourier transform, between the boundedness of the target operators $A$ from the function spaces $X$ into $Y$ and
that of its corresponding operators $\wt{A}$ from the corresponding function spaces $\wt{X}$ into $\wt{Y}$, realizing the transformation of the desired kernel theorem to the corresponding
``known kernel theorem" on (discrete) Lebesgue spaces.

Obviously, at least on the surface, the above idea is not valid in the framework of $\al$-modulation space, which lacks the corresponding tools such as the short time Fourier transform and its inverse transform.
However, we point out that the essence that allows to establish the corresponding kernel theorem for $A\in\calL(X,Y)$ is actually hidden in the following properties:
\bn
\item Every element in $X$ can be expanded by a series consisting of ``suitable atoms" multiplied by corresponding coefficients;
\item The action on $X$ of $A$ can be characterized by the actions on the atoms;
\item For linear operator $A$ and atom $x\in X$, the $Y$-norm of $Ax$ can be represented as a quantity related to the distributional kernel of $A$.
\en

Based on the above observation,
the boundedness of linear operators on $\al$-modulation spaces can be
characterized in terms of the membership of their distributional kernels to certain mixed $\al$-modulation spaces.
We refer to Section 5 for the definitions of mixed $\al$-modulation space $M^{p_1,p_2,q_1,q_2}_{\al,s,t}(c_i)$, $i=1,2$.

For two $\al$-modulation spaces $M^p_{\al}(\rd)$ and $M^q_{\al}(\rd)$, we use $A\in\calL(M^p_{\al}(\rd), M^q_{\al}(\rd))$ to denote that $A\in \calL(\calS(\rd), \calS'(\rd))$ first defined on  the Schwartz space, satisfying $\|Af\|_{\mqad}\lesssim \|f\|_{\mpad}$ for all $f\in \calS(\rd)$, and then can be extended to a bounded operator from $\calM^p_{\al}(\rd)$ to $M^q_{\al}(\rd)$,
where $\calM^p_{\al}(\rd)$ denotes the $\calS(\rd)$ closure in $\mpad$.
See Section 2 for the precise definition of $\al$-modulation space. Denote $q\wedge 1:=\min\{q,1\}$.
The translation operator is defined by $T_xf(t)=f(x-t)$.
We state our main theorems as follows.

\begin{theorem}\label{thm-sBKa}
	Suppose that $p,q\in (0,\fy]$ satisfying $p\leq q\wedge 1$.
	Let $A\in \calL(\calS(\rd), \calS'(\rd))$ be a linear operator with
	distributional kernel $K_A\in \calS'(\rdd)$.		The following statements are equivalent
	\bn
	\item
	$A\in\calL(M^p_{\al}(\rd), M^q_{\al}(\rd))$;
	\item
	$\{\lan k\ran^{\frac{\al d}{1-\al}(1/p-1)}A(T_{y} \check{\eta_k^{\al}}): k\in \zd, y\in \rd\}$
	is a bounded subset of $M^{q}_{\al}(\rd)$;
	\item
	$K_A\in M^{q,q,\fy,\fy}_{\al,0,\al d(1/p-1)}(c_1)$.
	\en
	Moreover, we have the norm estimate
	\be
	\begin{split}
		\|A\|_{\calL(\mpad,\mqad)}
		\sim
		\sup_{k\in \zd, y\in \rd}\lan k\ran^{\frac{\al d}{1-\al}(1/p-1)}\|A(T_{y} \check{\eta_k^{\al}})\|_{M^q_{\al}}
		\sim 
		\|K_A\|_{M^{q,q,\fy,\fy}_{\al,0,\al d(1/p-1)}(c_1)},
	\end{split}
	\ee
	where the implicit constant is independent of $A$.
	\end{theorem}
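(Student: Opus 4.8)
The plan is to follow the three-step philosophy highlighted in the introduction, with the \emph{atoms} being the normalized building blocks $\phi_{k,y}:=\lan k\ran^{\frac{\al d}{1-\al}(1/p-1)}T_{y}\check{\eta_k^{\al}}$, $k\in\zd$, $y\in\rd$. Since $\eta_k^{\al}$ is supported in a ball of radius $R_k\sim\lan k\ran^{\al/(1-\al)}$, a direct scaling computation gives $\|\check{\eta_k^{\al}}\|_{L^p}\sim R_k^{d(1-1/p)}=\lan k\ran^{-\frac{\al d}{1-\al}(1/p-1)}$, and the bounded overlap of the $\al$-covering together with the translation invariance (up to the symmetric reflection $k\mapsto -k$ of the covering) of $\|\cdot\|_{\mpad}$ yields $\|\phi_{k,y}\|_{\mpad}\sim 1$ uniformly in $k,y$; thus condition (2) asserts exactly that $A$ maps these unit atoms into a bounded subset of $\mqad$. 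Because $p\leq q\wedge 1\leq 1<\fy$, the Schwartz class is dense in $\mpad$, so $\calM^{p}_{\al}(\rd)=\mpad$ and there is no ambiguity in the domain of $A$. I would prove the cycle $(1)\Rightarrow(2)\Rightarrow(1)$ together with the first displayed equivalence, and then obtain $(2)\Leftrightarrow(3)$ by reinterpreting $A(T_y\check{\eta_k^{\al}})$ as an $\al$-decomposition of the kernel in its second variable.

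The implication $(1)\Rightarrow(2)$ is immediate: the atoms lie in $\calS(\rd)$, so $\|A\phi_{k,y}\|_{\mqad}\leq\|A\|_{\calL(\mpad,\mqad)}\|\phi_{k,y}\|_{\mpad}\lesssim\|A\|_{\calL(\mpad,\mqad)}$, which bounds the supremum $C:=\sup_{k,y}\lan k\ran^{\frac{\al d}{1-\al}(1/p-1)}\|A(T_y\check{\eta_k^{\al}})\|_{M^q_{\al}}$ by $\|A\|_{\calL(\mpad,\mqad)}$. For $(2)\Rightarrow(1)$ I would first prove the a priori bound on $\calS(\rd)$ and then extend by density. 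I invoke the atomic decomposition of $\al$-modulation spaces: every $f\in\calS(\rd)$ can be written as $f=\sum_{k,n}c_{k,n}\phi_{k,y_{k,n}}$, where $y_{k,n}$ runs over a lattice of mesh $\sim R_k^{-1}$ adapted to the block $\bka$ and, by the sampling estimate for band-limited functions together with the scaling of $R_k$, the coefficients satisfy $\|(c_{k,n})\|_{\ell^p}\sim\|f\|_{\mpad}$. Writing $r:=q\wedge 1$ and applying $A$ term by term---legitimate because $A\in\calL(\calS(\rd),\calS'(\rd))$ and the atomic series of a Schwartz function converges in $\calS(\rd)$---the $r$-triangle inequality of the quasi-norm $\|\cdot\|_{\mqad}$ yields
\be
\|Af\|_{\mqad}^{r}\leq\sum_{k,n}|c_{k,n}|^{r}\,\|A\phi_{k,y_{k,n}}\|_{\mqad}^{r}\leq C^{r}\sum_{k,n}|c_{k,n}|^{r}.
\ee
The hypothesis $p\leq r=q\wedge 1$ gives the embedding $\ell^{p}\hookrightarrow\ell^{r}$, so $\|(c_{k,n})\|_{\ell^{r}}\leq\|(c_{k,n})\|_{\ell^{p}}\sim\|f\|_{\mpad}$ and hence $\|Af\|_{\mqad}\lesssim C\|f\|_{\mpad}$; density of $\calS(\rd)$ in $\mpad$ then produces $(1)$ and the reverse bound $\|A\|_{\calL(\mpad,\mqad)}\lesssim C$, completing the first norm equivalence.

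Finally, for $(2)\Leftrightarrow(3)$ I would unfold both sides. Using the kernel relation $Af(x)=\int_{\rd}K_A(x,t)f(t)\,dt$ with $f=T_y\check{\eta_k^{\al}}$, one computes
\be
A(T_y\check{\eta_k^{\al}})(x)=\int_{\rd}K_A(x,t)\,\check{\eta_k^{\al}}(y-t)\,dt=\big(K_A(x,\cdot)\ast\check{\eta_k^{\al}}\big)(y),
\ee
which is precisely the $\al$-decomposition operator $\bka$ applied to $K_A$ in its second variable and evaluated at $y$. Consequently $\|A(T_y\check{\eta_k^{\al}})\|_{M^q_{\al}}$ is the $\mqad$-norm in the first variable of the $k$-th second-variable block of $K_A$, and taking the supremum over $y$ together with the weighted supremum over $k$ (weight $\lan k\ran^{\frac{\al d}{1-\al}(1/p-1)}$, corresponding to the index $t=\al d(1/p-1)$) reproduces exactly the definition of $\|K_A\|_{M^{q,q,\fy,\fy}_{\al,0,\al d(1/p-1)}(c_1)}$ from Section 5, whence the second norm equivalence. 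The main obstacle is the $(2)\Rightarrow(1)$ step, and specifically the atomic decomposition with atoms that are genuine translates of $\check{\eta_k^{\al}}$ and coefficients controlled exactly in $\ell^{p}$: this requires a sampling and reconstruction theorem for the band-limited blocks $\bka f$ compatible with the $\al$-covering, and care that the decomposition of a Schwartz function converges in $\calS(\rd)$ so that $A$ may be applied term by term. Once this decomposition is in hand, the remaining estimates are the soft quasi-norm and sequence-space inequalities above.
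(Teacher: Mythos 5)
Your implications $(1)\Rightarrow(2)$ and $(2)\Leftrightarrow(3)$ are fine and essentially coincide with the paper's own arguments: the identification of $A(T_y\check{\eta_k^{\al}})$ with a second-variable $\al$-block of $K_A$ (up to the reflection $k\mapsto -k$ coming from $\check{\eta_k^{\al}}(\xi)=\check{\eta_{-k}^{\al}}(-\xi)$, which washes out under the supremum over $k$) is exactly how the paper matches statement (2) with the $M^{q,q,\fy,\fy}_{\al,0,\al d(1/p-1)}(c_1)$-norm. The genuine gap is in $(2)\Rightarrow(1)$, at precisely the point you flag as the ``main obstacle'' and then invoke as if it were available: an expansion $f=\sum_{k,n}c_{k,n}T_{y_{k,n}}\check{\eta_k^{\al}}$ with $\|(c_{k,n})\|_{l^p}\sim\|f\|_{\mpad}$ obtained from ``the sampling estimate for band-limited functions.'' No such decomposition follows from sampling theory, because Fourier-series reconstruction of a band-limited function (Lemma~\ref{lm-Fourierexp-standard}, Proposition~\ref{pp-Fourierexp-alpha}) requires the reconstructing window $\psi$ to satisfy $\psi\hat{g}=\hat{g}$, i.e.\ $\psi\equiv 1$ on the spectrum. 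The partition function $\eta_k^{\al}$ is not identically $1$ on the spectrum of $\bka f$ (it is a partition-of-unity bump, so $(\eta_k^{\al})^2\neq\eta_k^{\al}$ there), and for a suitably fine mesh $\la$ the sampling series built from the blocks themselves returns
\be
\la^d\sum_{n\in\zd}\bka f(\la n)\,T_{\la n}\check{\eta_k^{\al}}
=\scrF^{-1}\big(\eta_k^{\al}\,\widehat{\bka f}\big)
=(\bka)^2 f\ \neq\ \bka f
\ee
(up to a harmless reflection of variables), since multiplying the periodization of $\widehat{\bka f}$ by $\eta_k^{\al}$ does not reproduce $\widehat{\bka f}$. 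So the series to which you apply $A$ term by term does not reconstruct $f$, and the $\ell^p$ coefficient equivalence you rely on has no proof; this is not a technicality but the crux of the implication.

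The paper circumvents exactly this difficulty, and the device it uses is the idea missing from your proposal. It first proves Theorem~\ref{thm-BKa}, whose atoms are $T_{\b_k k'}\check{\va_k^{\al}}$ with $\va_k^{\al}\equiv 1$ on $B(\lan k\ran^{\frac{\al}{1-\al}}k,C\lan k\ran^{\frac{\al}{1-\al}})$ (condition \eqref{dual-frame-alpha}), precisely so that Proposition~\ref{pp-Fourierexp-alpha} applies to each $\bka f$. Then, to pass from your hypothesis (2) --- which only controls $A$ on translates of $\check{\eta_k^{\al}}$ --- to the hypothesis of Theorem~\ref{thm-BKa}, it sets $\widetilde{\va_k^{\al}}=\sum_{l\in\La_k}\eta_l^{\al}$, where $\La_k$ collects the uniformly finitely many $l$ whose supports meet that ball: since $\sum_{l}\eta_l^{\al}\equiv 1$, the function $\widetilde{\va_k^{\al}}$ is identically $1$ on the ball and satisfies \eqref{dual-frame-alpha}, while $A(T_y\check{\widetilde{\va_k^{\al}}})=\sum_{l\in\La_k}A(T_y\check{\eta_l^{\al}})$ is a uniformly finite sum, so (2) yields the required boundedness (using $\lan l\ran\sim\lan k\ran$ for $l\in\La_k$ and the $q\wedge 1$-triangle inequality), and Theorem~\ref{thm-BKa} gives (1). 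Alternatively, your route can be repaired by a quadrature argument: translates of $\check{\eta_k^{\al}}$ do serve as atoms, but the coefficients must be samples of the \emph{fattened} block $\scrF^{-1}(\va_k^{\al}\hat{f})$ rather than of $\bka f$, because $\eta_k^{\al}\cdot(\va_k^{\al}\hat{f})=\eta_k^{\al}\hat{f}=\widehat{\bka f}$. Either way, an extra device of this kind must be supplied before your $(2)\Rightarrow(1)$ step stands; the soft $q\wedge1$-triangle-inequality and $l^p\hookrightarrow l^{q\wedge1}$ estimates that follow it are correct and are the same as in the paper.
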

We have the corresponding dual conclusion for $A\in\calL(M^p_{\al}(\rd), M^{\fy}_{\al}(\rd))$.
\begin{theorem}\label{thm-sBKa-dual}
	Suppose that $p\in [1, \i]$.
	Let $A\in \calL(\calS(\rd), \calS'(\rd))$ be a linear operator with
	distributional kernel $K_A\in \calS'(\rdd)$.	
We have
 \be
A\in\calL(M^p_{\al}(\rd), M^{\fy}_{\al}(\rd))\Longleftrightarrow K_A\in M^{p',p',\fy,\fy}_{\al,0,0}(c_2)
 \ee
 Moreover, we have the norm estimate
 \be
    \begin{split}
    \|A\|_{\calL(M^p_{\al}(\rd), M^{\i}_{\al}(\rd))}
    \sim 
    \|K_A\|_{M^{p',p',\fy,\fy}_{\al,0,0}(c_2)}.
    \end{split}
    \ee
    where the implicit constant is independent of $A$.
\end{theorem}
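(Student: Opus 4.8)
The plan is to deduce this dual statement from Theorem~\ref{thm-sBKa} by passing to the transpose operator, rather than repeating the atomic analysis. The starting point is the duality $M^{\fy}_{\al}(\rd)=(\calM^1_{\al}(\rd))^*$, which gives, for an operator $A$ initially defined on $\calS(\rd)$,
\be
\|A\|_{\calL(\mpad,\,M^{\fy}_{\al}(\rd))}
=\sup_{f,g\in\calS(\rd)}\frac{|\langle Af,g\rangle|}{\|f\|_{\mpad}\,\|g\|_{M^1_{\al}}}
=\|A^*\|_{\calL(M^1_{\al}(\rd),\,(\calM^p_{\al}(\rd))^*)},
\ee
where $A^*$ is the transpose. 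Since $(\calM^p_{\al}(\rd))^*=M^{p'}_{\al}(\rd)$ for every $p\in[1,\fy]$ (reading $p'=\fy$ when $p=1$ and $p'=1$ when $p=\fy$, the Schwartz closures $\calM^1_{\al}$, $\calM^{\fy}_{\al}$ entering at the endpoints), this identifies
\be
A\in\calL(\mpad,M^{\fy}_{\al}(\rd))\Longleftrightarrow A^*\in\calL(M^1_{\al}(\rd),M^{p'}_{\al}(\rd)),
\ee
with comparable norms.

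Granting this, I would feed $A^*$ into Theorem~\ref{thm-sBKa} with the source/target exponents $(p,q)$ there replaced by $(1,p')$. The admissibility requirement $1\le p'\wedge 1$ holds throughout the range $p\in[1,\fy]$, because $p'\ge 1$ forces $p'\wedge 1=1$; this is precisely what opens up the full interval $[1,\fy]$ here. Moreover the input weight degenerates, as $\al d(1/1-1)=0$, so Theorem~\ref{thm-sBKa} yields
\be
A^*\in\calL(M^1_{\al}(\rd),M^{p'}_{\al}(\rd))\Longleftrightarrow K_{A^*}\in M^{p',p',\fy,\fy}_{\al,0,0}(c_1),
\ee
again with equivalence of norms. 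At this point I would record the routine kernel bookkeeping that the transpose has distributional kernel $K_{A^*}(u,x)=\overline{K_A(x,u)}$, a direct consequence of $\langle Af,g\rangle=\langle K_A,g\otimes\bar f\rangle$.

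The final step is to transfer membership of $K_{A^*}$ back to $K_A$, and this is exactly where the two labels $(c_1)$ and $(c_2)$ of Section~5 do their work. The coordinate flip $(u,x)\mapsto(x,u)$ interchanges the ``output'' and ``input'' variables of the kernel, and because every ingredient of the mixed norm --- the $\al$-localizations $\Box_k^{\al}$, the Lebesgue norms over the two space variables, and the sequence norms over the two $\al$-coverings --- is invariant under complex conjugation, the flip carries the $(c_1)$-ordering onto the $(c_2)$-ordering isometrically, so that
\be
\|K_{A^*}\|_{M^{p',p',\fy,\fy}_{\al,0,0}(c_1)}=\|K_A\|_{M^{p',p',\fy,\fy}_{\al,0,0}(c_2)}.
\ee
Chaining the three equivalences then yields both the claimed characterization and the norm equivalence.

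The part I expect to demand the most care, and hence the main obstacle, is the duality identification at the two endpoints $p=1$ and $p=\fy$. For $1<p<\fy$ the passage $A\mapsto A^*$ and the identity $(\calM^p_{\al})^*=M^{p'}_{\al}$ are standard, but at the endpoints one must keep careful track of which of $M^1_{\al}$ and $M^{\fy}_{\al}$ is a genuine dual and which is only a predual, controlling the pairing $\langle Af,g\rangle$ first on $\calS(\rd)\times\calS(\rd)$ and only then extending by density, exactly as the definition of $\calL(M^p_{\al},M^q_{\al})$ in Section~2 requires. A secondary delicate point is that the flip identity relating $(c_1)$ and $(c_2)$ must be checked to be a genuine isometry rather than merely an equivalence of norms, so that the clean constant-free statement is preserved.
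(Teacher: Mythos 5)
Your proposal is correct and is essentially the paper's own argument: the paper likewise introduces the conjugate adjoint $\wt{A}$ (your $A^*$) defined by $\lan \wt{A}g,f\ran=\overline{\lan Af,g\ran}$, reduces via Proposition \ref{pp-dual-bd} (resting on the duality $(\calM^p_{\al}(\rd))^*=M^{p'}_{\al}(\rd)$ of Proposition \ref{pp-dual-alph}) to $\wt{A}\in\calL(M^1_{\al}(\rd),M^{p'}_{\al}(\rd))$, applies Theorem \ref{thm-sBKa} with exponents $(1,p')$, and then transfers $K_{\wt{A}}\in M^{p',p',\fy,\fy}_{\al,0,0}(c_1)$ to $K_A\in M^{p',p',\fy,\fy}_{\al,0,0}(c_2)$. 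The flip step you flag as delicate is settled in the paper by the computation $\Box_{k,l}^{\al}K_{\wt{A}}(x,y)=\overline{\Box_{-l,-k}^{\al}K_A(y,x)}$, where the conjugation-induced index reflection $(k,l)\mapsto(-l,-k)$ is absorbed by the symmetry of the covering, exactly as you anticipate.
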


\begin{remark}
Note that the case $A\in\calL(M^p_{\al}(\rd), M^{\i}_{\al}(\rd))$ with $p\leq 1$ has been dealt with in Theorem \ref{thm-sBKa}. Combining this with Theorem \ref{thm-sBKa-dual},
we actually established the kernel theorem of $A\in\calL(M^p_{\al}(\rd), M^{\i}_{\al}(\rd))$ for all $p\in (0,\fy]$.
\end{remark}

Again, by making use the strategy of transforming the action of the linear operator on the function space into its action on the atoms, the compactness of linear operators on $\al$-modulation spaces can be
characterized in terms of the membership of their distributional kernels to certain closed subspace of mixed $\al$-modulation spaces.
See Section 5 for the definition of $\tilde{M}^{q,q,\fy,\fy}_{\al,0,\al d(1/p-1)}(c_1)$.
\begin{theorem}\label{thm-sCKa}
	Suppose that $p,q\in (0,\fy)$ satisfying $p\leq q\wedge 1$.
	Let $A\in \calL(\calS(\rd), \calS'(\rd))$ be a linear operator with
	distributional kernel $K_A\in \calS'(\rdd)$.	The following statements are equivalent
	\bn
	\item
	$A\in\calK(M^p_{\al}(\rd), M^q_{\al}(\rd))$;
	\item
	$\{\lan k\ran^{\frac{\al d}{1-\al}(1/p-1)}A(T_{y} \check{\eta_k^{\al}}): k\in \zd, y\in \rd\}$
	is a totally bounded subset of $M^{q}_{\al}(\rd)$;
	\item
	$K_A\in \tilde{M}^{q,q,\fy,\fy}_{\al,0,\al d(1/p-1)}(c_1)$.
	\en
	\end{theorem}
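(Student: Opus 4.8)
The plan is to follow the three-step architecture of Theorem~\ref{thm-sBKa}, promoting every ``bounded'' statement to its ``totally bounded'' (relatively compact) analogue and replacing the mixed space $M^{q,q,\fy,\fy}_{\al,0,\al d(1/p-1)}(c_1)$ by its little counterpart $\tilde{M}^{q,q,\fy,\fy}_{\al,0,\al d(1/p-1)}(c_1)$. The underlying philosophy is unchanged: the action of $A$ is recovered from its action on the atoms $T_y\check{\eta_k^\al}$, and the relevant quantities are read off from the distributional kernel $K_A$. Since $p,q<\fy$ forces $\calM^p_\al(\rd)=\mpad$, I may work with the full unit ball of $\mpad$. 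I would establish $(1)\Rightarrow(2)$ directly, $(2)\Rightarrow(1)$ by truncation, and $(2)\Leftrightarrow(3)$ by unwinding the definition of the little space. For $(1)\Rightarrow(2)$: the weight $\lan k\ran^{\frac{\al d}{1-\al}(1/p-1)}$ is precisely the normalization making the family $\{\lan k\ran^{\frac{\al d}{1-\al}(1/p-1)}T_y\check{\eta_k^\al}:k\in\zd,\ y\in\rd\}$ uniformly bounded in $\mpad$ (a fact already implicit in Theorem~\ref{thm-sBKa}); a compact $A$ sends this bounded family to a totally bounded subset of $\mqad$, which is exactly (2).

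For $(2)\Rightarrow(1)$ I would use the atomic decomposition of $\mpad$: every $f$ in the unit ball is a superposition of the atoms $T_y\check{\eta_k^\al}$ with coefficients controlled in the attached sequence space, whence $Af$ is the corresponding superposition of the images $A(T_y\check{\eta_k^\al})$. Fixing $\varepsilon>0$, I would split the expansion into a truncated part over finitely many frequency indices $|k|\le N$ together with a bounded range of $y$, and a tail. The truncated part maps the unit ball into the balanced convex hull, up to a bounded factor, of a totally bounded family, hence into a relatively compact set, while the assumed total boundedness in (2) is exactly what makes the tail contribution uniformly small. Combining these shows that $A$ carries the unit ball of $\mpad$ into a totally bounded subset of $\mqad$, i.e.\ $A$ is compact.

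The main obstacle is the control of tails and hulls in the quasi-Banach regime $q<1$. There $\mqad$ is no longer a Banach space, so the classical Mazur fact that the closed convex hull of a compact set is compact is unavailable, and the passage from ``totally bounded atom images'' to ``totally bounded image of the unit ball'' cannot proceed by ordinary convexity. I would replace convexity by the $(q\wedge1)$-convexity of the quasi-norm, estimating the coefficient tails in the $\ell^{q\wedge1}$ scale, and I would have to extract from the total boundedness in (2) a genuinely uniform decay of $\lan k\ran^{\frac{\al d}{1-\al}(1/p-1)}\|A(T_y\check{\eta_k^\al})\|_{\mqa}$ as $|k|,|y|\to\fy$. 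Turning the qualitative statement ``the normalized atom images form a totally bounded set'' into this quantitative uniform smallness is the technically delicate point, and the defining vanishing condition of $\tilde{M}^{q,q,\fy,\fy}_{\al,0,\al d(1/p-1)}(c_1)$ is tailored to supply exactly what is needed.

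Finally, $(2)\Leftrightarrow(3)$ is a definition-chase built on Theorem~\ref{thm-sBKa}. That theorem identifies $\|K_A\|_{M^{q,q,\fy,\fy}_{\al,0,\al d(1/p-1)}(c_1)}$ with $\sup_{k,y}\lan k\ran^{\frac{\al d}{1-\al}(1/p-1)}\|A(T_y\check{\eta_k^\al})\|_{\mqa}$; the little space $\tilde{M}^{q,q,\fy,\fy}_{\al,0,\al d(1/p-1)}(c_1)$ is the closed subspace of $M^{q,q,\fy,\fy}_{\al,0,\al d(1/p-1)}(c_1)$ singled out by the corresponding ``decay to zero'' condition. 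I would combine a Riesz--Kolmogorov-type characterization of relatively compact subsets of $\mqad$ with this vanishing condition to see that $K_A\in\tilde{M}^{q,q,\fy,\fy}_{\al,0,\al d(1/p-1)}(c_1)$ holds if and only if the normalized atom-image family is totally bounded, which is statement (2).
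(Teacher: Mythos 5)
Your $(1)\Rightarrow(2)$ step and your plan for $(2)\Leftrightarrow(3)$ (a Riesz--Kolmogorov-type characterization of totally bounded subsets, which is the paper's Theorem \ref{thm-tbsa}) agree with the paper. The genuine gap is in $(2)\Rightarrow(1)$: the ``uniform decay'' of $\lan k\ran^{\frac{\al d}{1-\al}(1/p-1)}\|A(T_y\check{\eta_k^{\al}})\|_{M^q_{\al}}$ as $|k|,|y|\to\fy$ that your truncation argument needs is not merely delicate to extract from (2); it is \emph{false}. Take $p=1\leq q<\fy$ and $Af=\hat{f}(0)\,h$ with $h\in\calS(\rd)\setminus\{0\}$. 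This operator is bounded on $M^1_{\al}(\rd)$ since $|\hat{f}(0)|\leq\sum_{k}\|\Box_k^{\al}f\|_{L^1}=\|f\|_{M^1_{\al}}$, and it is rank one, hence compact. Since $\widehat{T_y\check{\eta_k^{\al}}}(0)=\eta_k^{\al}(0)$, the atom images are $A(T_y\check{\eta_k^{\al}})=\eta_k^{\al}(0)h$: a totally bounded set (scalar multiples of one vector), yet $\|A(T_y\check{\eta_0^{\al}})\|_{M^q_{\al}}=\eta_0^{\al}(0)\|h\|_{M^q_{\al}}>0$ for \emph{every} $y$, with no decay in $y$ whatsoever. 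Note also that the vanishing condition defining $\tilde{M}^{q,q,\fy,\fy}_{\al,0,\al d(1/p-1)}(c_1)$ is a supremum over $(k,y)$ of a tail in the image-side indices $(l,l')$; it supplies no decay in $(k,y)$ either, so statement (3) cannot rescue this step. The alternative way to control your tail --- uniform smallness of coefficient tails --- fails too, since the unit ball of $\ell^p$ does not have uniformly small tails. Thus the truncation-in-the-index strategy cannot close, and this is not specifically a quasi-Banach obstruction: it fails already for $q\geq 1$.

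What the paper uses instead, and what is missing from your outline, is Proposition \ref{pp-tbs}: if $\{\psi_j\}_j$ is totally bounded in $\mqad$ and the coefficient sequences range over a bounded subset of $\ell^{q\wedge 1}$, then all superpositions $\sum_j\la_j\psi_j$ form a totally bounded set. Its proof partitions the \emph{index set} according to a finite $\ep$-net of $\{\psi_j\}$ --- grouping atoms by proximity in $\mqad$, not by the size of their indices --- so that, up to an error of size $\ep$ controlled by the $(q\wedge1)$-triangle inequality, every superposition collapses onto a bounded subset of the finite-dimensional span of the net; no decay in the index is ever used. This is exactly the quasi-Banach substitute for Mazur's theorem, and with it $(2)\Rightarrow(1)$ follows from the expansion $Af=\sum_{k,k'}\b_k^{d}\Box_k^{\al}f(\b_kk')\,A(T_{\b_kk'}\check{\va_k^{\al}})$ of Proposition \ref{pp-exp-alpha}, whose coefficients are bounded in $\ell^{p}\subset\ell^{q\wedge1}$ uniformly over the unit ball. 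A secondary point you gloss over: this expansion uses the atoms $T_{\b_kk'}\check{\va_k^{\al}}$, where $\va_k^{\al}\equiv 1$ on a neighbourhood of $\mathrm{supp}\,\eta_k^{\al}$ (condition \eqref{dual-frame-alpha}); there is no sampling expansion in terms of the atoms $T_y\check{\eta_k^{\al}}$ appearing in statement (2). The paper bridges this mismatch by forming $\widetilde{\va_k^{\al}}=\sum_{l\in\La_k}\eta_l^{\al}$, a uniformly finite sum of the $\eta_l^{\al}$ satisfying \eqref{dual-frame-alpha}, observing that (2) forces $\{\lan k\ran^{\frac{\al d}{1-\al}(1/p-1)}A(T_y\check{\widetilde{\va_k^{\al}}})\}$ to be totally bounded as well, and only then invoking Theorem \ref{thm-CKa}; your proposal silently identifies the two families of atoms.
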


 \begin{remark}
     As mentioned above, the proofs of our kernel theorems are based on the expansion of the elements in $\al$-modulation spaces.
     However, such an expansion is not unique, and this also leads to the diversity of the descriptions of the theorems. 
     We put the theorems (Theorems \ref{thm-BKa}, \ref{thm-BKa-dual} and \ref{thm-CKa}) that is more closed related to the general expansion in Sections 3 and 4, and put their concise form (Theorems \ref{thm-sBKa}, \ref{thm-sBKa-dual} and \ref{thm-sCKa}) here.
     Note that although the descriptions of these theorems may be different, in essence they are equivalent and all are
     the characterizations of the bounded and compact linear operators on the $\al$-modulation spaces.
     From another point of view, the properties of the linear operator are characterized by the mixed $\al$-modulation spaces,
     and these mixed spaces have several equivalent norms just like the case of $\al$-modulation spaces. 
     See Section 5 for more details.
 \end{remark}

 \begin{remark}
     Recall that the Besov space can be regarded as the limiting case of $\al$-modulation space as $\al\rightarrow 1$.
Using the similar argument, one can also establish the corresponding boundedness and compactness kernel theorems for Besov spaces.
The more general case such as $A\in \calL(M^{p}_{s,\al_1}(\rd),M^{q}_{t,\al_2}(\rr^n))$ is also expected to be characterized.
 \end{remark}

The rest of this paper is organized as follows.
In Section 2, we collect some basic concepts and
properties used in this paper.
Section 3 is devoted to the boundedness kernel theorem for $\al$-modulation spaces, i.e., Theorems \ref{thm-BKa} and \ref{thm-BKa-dual}.
To achieve our goal, we establish some propositions which are used to decompose functions into specific series.
We also establish some auxiliary conclusions of independent interest for the duality argument used in Theorem \ref{thm-BKa-dual}.
The compactness kernel theorem for $\al$-modulation spaces (Theorem \ref{thm-CKa}) will be proved in Section 4.
For this purpose, we establish an expansion for the elements in $\al$-modulation spaces.
We also give some useful criteria for the totally bounded subsets in function spaces.
In Section 5, we discuss the equivalent forms of our main theorems, and also provide the proofs of 
their concise forms, i.e., Theorems \ref{thm-sBKa}, \ref{thm-sBKa-dual} and \ref{thm-sCKa}.
We also revisit the case of modulation space in this section.

\section{Preliminary}
First, we recall some notations. 
Let $d$ be the dimension of the Euclidean space.
Let $C$ be a positive
constant that may depend on $d,p,q,\al$. We use $A\lesssim B$ to denote the statement that $A\leq CB$, and use
$A\sim B$ to denote the statement $A\lesssim B\lesssim A$.
The notation $\mathscr{L}$ is used to denote some large positive number which may be changed
corresponding to the exact environment.
The Schwartz function space is denoted by $\calS(\rd)$, the space of tempered distributions by $\calS'(\rd)$.
We use the brackets $\lan f,g\ran$ to denote the extension to $\calS'(\rd)\times \calS(\rd)$ from the usual inner product on $L^2(\rd)$.
The Fourier transform $\mathscr {F}f$ and the inverse Fourier transform $\mathscr {F}^{-1}f$  of $f\in \calS(\rd)$ is defined by
$$
\mathscr {F}f(\xi)=\hat{f}(\xi)=\int_{\mathbb{R}^{d}}f(x)e^{-2\pi ix\cdot \xi}dx, \ \ \ 
\mathscr {F}^{-1}f(x)=\check{f}(x)=\int_{\mathbb{R}^{d}}f(\xi)e^{2\pi ix\cdot \xi}d\xi.
$$

Next, we introduce the partition of unity associated with $\al\in [0,1)$.
Let $\rho_0$ be a smooth radial bump function satisfying $\rho_0(\xi)=1$ for $|\xi|\leq 1/4$, and $\text{supp}\rho_0=B(0,1/2)$.
For any $k\in\mathbb{Z}^d$, we set
\ben\label{pre-1}
\rho_k^{\alpha}(\xi)=\rho\left( \frac{\xi-\langle
	k\rangle^{\frac{\alpha}{1-\alpha}}k}{\langle
	k\rangle^{\frac{\alpha}{1-\alpha}} }\right),\ \ \ \ \r(\xi)=\r_0(\frac{\xi}{2C}).
\een
There exists a large constant $C$ such that
$
\sum_{l\in\zd}\rho_l^{\alpha}(\xi)\gtrsim 1
$
for all $\xi\in \rd$.
Denote by
\ben\label{pre-2}
	\eta_k^{\alpha}(\xi)=\rho_k^{\alpha}(\xi)\bigg(\sum_{l\in\zd}\rho_l^{\alpha}(\xi)\bigg)^{-1}.
\een
In our definition, we find that $\text{supp}\eta_k^{\al}=B(\langle k\rangle^{\frac{\alpha}{1-\alpha}}k, C\lan k\ran^{\frac{\al}{1-\al}})$ for all $k\in \zd$.
The sequence $\{\eta_{k}^{\alpha}\}_{k\in\mathbb{Z}^{d}}$ constitutes a smooth partition of unity of $\mathbb{R}^{d}$.
The corresponding frequency decomposition operators are defined by
\begin{equation}
	\Box_{k}^{\alpha}:= \scrF^{-1}\eta_{k}^{\alpha}\scrF, \, k\in \mathbb{Z}^{d}.
\end{equation}
Let $0< p,q \leq \infty$, $s\in \rr$, $\alpha \in [0,1)$. The $\alpha$-modulation space associated with above decomposition is defined by
\begin{equation}
	M^{p,q}_{s,\al}(\rd)
	=
	\bigg\{
	f\in \calS(\rd):
	\|f\|_{M^{p,q}_{s,\al}(\rd)}
	=\bigg( 
          \sum_{k\in \zd}\|\Box_k^{\alpha} f\|_{L^p}^{q} \lan k \ran^{\frac{sq}{1-\a}}
     \bigg)^{1/q}<\infty
	\bigg\},
\end{equation}
with the usual modifications when $q=\infty$. For simplicity, we write
$M^{p,q}_{\al}(\rd)=M^{p,q}_{0,\al}(\rd)$, $M^{p}_{s,\al}(\rd)=M^{p,p}_{s,\al}(\rd)$ and 
$M^{p}_{\al}(\rd)=M^{p}_{0,\al}(\rd)$.
Note that the modulation space coincides with $\a$-modulation space when $\a=0$.
\begin{remark}
	In the definition of $\a$-modulation spaces, the sequence of $\{\eta_{k}^{\alpha}\}_{k\in\mathbb{Z}^{d}}$
	can be replaced by a large class of suitable sequences associated with $\al$ satisfying that
 \ben\label{smooth-partition-alpha}
\begin{cases}
|\eta _{k}^{\alpha }(\xi )|\gtrsim 1,~\text{if}~|\xi -\langle k\rangle ^{\frac{%
\alpha }{1-\alpha }}k|<C_1\langle k\rangle ^{\frac{\alpha }{1-\alpha }}; \\
\text{supp}\eta _{k}^{\alpha }\subset \{\xi :|\xi -\langle k\rangle ^{%
\frac{\alpha }{1-\alpha }}k|<C_2\langle k\rangle ^{\frac{\alpha }{1-\alpha }%
}\}; \\
\sum_{k\in \mathbb{Z}^{d}}\eta _{k}^{\alpha }(\xi )\equiv 1,\forall \xi \in
\mathbb{R}^{d}; \\
|\partial ^{\gamma }\eta _{k}^{\alpha }(\xi )|\leq C_{\gamma}\langle
k\rangle ^{-\frac{\alpha |\gamma |}{1-\alpha }},\forall \xi \in \mathbb{R}%
^{d},\gamma \in \mathbb{N}^{d},%
\end{cases}%
\een
 for suitable $C_2>C_1>0$.
 We refer to \cite{HanWang2014JMSJ} for more details.
	Here, the exact smooth partition is chosen for the convenience of our proof.
\end{remark}

Suppose $p\in (0, \infty]$.
Let $\{a_k\}_{k\in \zd}$ denote a sequence of complex
numbers. Set $\Vert \{a_k\}\Vert _{l^p(\zd)}=\left( \sum_{k\in \zd}|a_k|^{p}\right) ^{\frac{1}{p}}$ with usual modification for $p=\fy$.
We use $l^{p}(\zd)$ to denote the set of all sequences $\{a_{k}\}_{k\in \zd}$ such that $\Vert \{a_k\}\Vert _{l^p(\zd)}<\infty$.

For a bounded subset $\Om$ of $\rd$, we use $\calS'_{\Om}(\rd)$ to denote the collection of all distribution $f\in \calS'(\rd)$ such that $\text{supp}\hat{f}\subset \Om$.
Correspondingly, we use the notation $\calS_{\Om}=\calS\cap \calS'_{\Om}$ and $L^p_{\Om}=L^p\cap \calS'_{\Om}$.
Let $B(\xi_0,R)$ denotes the open ball centered at $\xi_0$ with radius $R$, where $R>0$.
\begin{lemma}\cite[Theorem 1.4.1(3)]{Triebel1983}\label{lm-ebd-Lp}
 Let $0<p\leq
q\leq \infty $ and assume $f\in L^p_{B(\xi_0,R)}(\rd)$. We have
\begin{equation}
\Vert f\Vert _{L^{q}(\rd)}\leq CR^{d(1/p-1/q)}\Vert
f\Vert _{L^p(\rd)},
\end{equation}%
where $C$ is independent of $f$ and $R$.
\end{lemma}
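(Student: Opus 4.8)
The plan is to reduce the general band-limited inequality to the model case $\xi_0=0$, $R=1$ by exploiting the invariance of the Fourier support under modulation and dilation, and then to settle the model case through a reproducing formula, together with a self-improving (Nikolskii-type) argument in the hard regime $p<1$. First I would remove the center $\xi_0$: setting $g(x)=e^{-2\pi i\xi_0\cdot x}f(x)$ we have $\widehat{g}(\xi)=\widehat{f}(\xi+\xi_0)$, so $\mathrm{supp}\,\widehat{g}\subset B(0,R)$, while $|g|=|f|$ pointwise leaves every $L^r$-norm unchanged. Next I would remove the radius by dilation: with $h(x)=g(x/R)$ one computes $\widehat{h}(\xi)=R^d\widehat{g}(R\xi)$, hence $\mathrm{supp}\,\widehat{h}\subset B(0,1)$ and $\|h\|_{L^r}=R^{d/r}\|g\|_{L^r}$ for every $0<r\le\infty$. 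Therefore, once the model inequality $\|h\|_{L^q}\le C\|h\|_{L^p}$ is established for $h\in L^p_{B(0,1)}(\rd)$ with $C$ independent of $h$, rescaling gives $R^{d/q}\|g\|_{L^q}\le CR^{d/p}\|g\|_{L^p}$, i.e. $\|f\|_{L^q}\le CR^{d(1/p-1/q)}\|f\|_{L^p}$ after undoing the modulation. This pins down the stated dependence on $R$ and its independence of $f$.

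For the model case I would fix $\varphi\in\calS(\rd)$ with $\widehat{\varphi}\in C_c^\infty(\rd)$ and $\widehat{\varphi}\equiv 1$ on $B(0,1)$. Since $\mathrm{supp}\,\widehat{h}\subset B(0,1)$, we have $\widehat{h}=\widehat{\varphi}\,\widehat{h}$, which is the reproducing formula $h=h*\varphi$. The crucial reduction is to the endpoint $\|h\|_{L^\infty}\lesssim\|h\|_{L^p}$, valid for all $0<p\le\infty$; indeed, for $0<p\le q<\infty$ the elementary interpolation inequality $\|h\|_{L^q}\le\|h\|_{L^\infty}^{1-p/q}\|h\|_{L^p}^{p/q}$ (obtained from $\int|h|^q=\int|h|^{q-p}|h|^{p}\le\|h\|_{L^\infty}^{q-p}\|h\|_{L^p}^{p}$) combined with the endpoint yields $\|h\|_{L^q}\lesssim\|h\|_{L^p}$, while the case $q=\infty$ is the endpoint itself. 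When $p\ge 1$ the endpoint is immediate from the reproducing formula and H\"older's inequality, $|h(x)|=|(h*\varphi)(x)|\le\|h\|_{L^p}\|\varphi\|_{L^{p'}}$.

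The main obstacle is the endpoint in the quasi-Banach regime $0<p<1$, where Young's and H\"older's inequalities fail. Here I would run a self-improving argument inside the reproducing formula: writing $|h(x-y)|=|h(x-y)|^{1-p}|h(x-y)|^{p}\le\|h\|_{L^\infty}^{1-p}|h(x-y)|^{p}$ gives $|h(x)|\le\|h\|_{L^\infty}^{1-p}\|\varphi\|_{L^\infty}\int_{\rd}|h(x-y)|^{p}\,dy=\|h\|_{L^\infty}^{1-p}\|\varphi\|_{L^\infty}\|h\|_{L^p}^{p}$. Taking the supremum over $x$ and dividing by $\|h\|_{L^\infty}^{1-p}$ produces $\|h\|_{L^\infty}\le\|\varphi\|_{L^\infty}^{1/p}\|h\|_{L^p}$, with a constant depending only on $p,d$. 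The one delicate point is the a priori finiteness of $\|h\|_{L^\infty}$ that legitimizes the division; since $\widehat{h}$ has compact support, $h$ is smooth of polynomial growth by the Paley--Wiener--Schwartz theorem, and the required finiteness can be secured either by localizing the above estimate or by first proving the inequality for Schwartz band-limited functions (where all norms are manifestly finite) with a constant depending only on $p,q,d$ and then passing to general $h\in L^p_{B(0,1)}(\rd)$ by a standard band-limited approximation. Tracking the constants through the modulation and dilation reductions then delivers the claimed estimate.
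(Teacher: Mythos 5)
Your proof is correct, but it is worth noting that the paper itself offers no proof of this lemma: it is imported wholesale from Triebel \cite[Theorem 1.4.1(3)]{Triebel1983}, where it arises from the Plancherel--Polya--Nikolskii machinery built on Peetre-type maximal functions. What you give is a correct, self-contained proof along the other standard route, and every step checks out: the modulation and dilation reductions are exact (the bookkeeping $\|h\|_{L^r}=R^{d/r}\|g\|_{L^r}$ produces precisely the factor $R^{d(1/p-1/q)}$, with the constant depending only on $p,q,d$ through $\varphi$), the interpolation $\|h\|_{L^q}\le\|h\|_{L^\infty}^{1-p/q}\|h\|_{L^p}^{p/q}$ is elementary, H\"older handles $p\ge 1$, and the self-improvement $\|h\|_{L^\infty}\le\|h\|_{L^\infty}^{1-p}\|\varphi\|_{L^\infty}\|h\|_{L^p}^{p}$ is the standard way to beat the failure of Young's inequality for $p<1$. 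You also correctly isolate the one genuine danger, namely that dividing by $\|h\|_{L^\infty}^{1-p}$ requires the a priori finiteness of $\|h\|_{L^\infty}$, which Paley--Wiener--Schwartz alone does not supply, and both of your proposed repairs do close it. For the record: the clean implementation of your ``localization'' is Peetre's weight trick --- apply the convolution estimate to $H_\epsilon(x)=|h(x)|(1+\epsilon|x|)^{-a}$ with $a$ exceeding the polynomial growth order, use $(1+\epsilon|x-y|)^{a}\le(1+\epsilon|x|)^{a}(1+|y|)^{a}$ and absorb $(1+|y|)^{a}|\varphi(y)|$ into the constant, so that $\|H_\epsilon\|_{L^\infty}<\infty$ legitimizes the division uniformly in $\epsilon$, then let $\epsilon\to 0$; and your approximation alternative is literally the device this paper uses in Proposition \ref{pp-Fourierexp-standard} (multiply by $\rho_N=\rho(\cdot/N)$ with $\hat{\rho}$ compactly supported, conclude by Fatou), with the minor detail that $\text{supp}\,\hat{h_N}$ then lies in the slightly larger ball $B(0,1+1/N)$, so $\hat{\varphi}$ should be taken $\equiv 1$ on, say, $B(0,2)$. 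The trade-off versus the cited source: Triebel's maximal-function proof yields, in the same stroke, derivative estimates and Fourier multiplier theorems, none of which are needed here; your argument is shorter, elementary, and delivers exactly the inequality the paper uses.
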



\section{Boundness kernel theorem for $\alpha$-modulation spaces}
In this section we establish the boundness kernel theorem for linear operators between $\al$-modulation spaces.
The basic idea comes from transforming the action of linear operator on the function space into the action on the atoms.
We first establish some expansions with unconditional convergence in function spaces. 
Let $N\in \mathbb{N}$, the function space $\calC^N$ is defined by
\be
\calC^N:=\{g\in C^N: |\partial^{\g}g|\lesssim 1\ \text{for all}\ |\g|\leq N\}.
\ee
with the norm $\|g\|_{\calC^N}:=\sum_{|\g|\leq N}\|\partial^{\g}g\|_{L^{\fy}}$.

\begin{lemma}\label{lm-Fourierexp-standard}
    Let $p\in (0,\fy]$, $r\in (0,1)$.
	Let $\psi$ be a smooth function supported on $B(0,1/2)$, satisfying $\psi\equiv 1$ on $B(0,r/2)$.
	Let $\la\in (0,1]$.
	For any $f\in \calS_{B(0,r/2)}(\rd)$, we have the expansion
	\be
	f(x)=\la^d\sum_{k\in \zd}f(\la k)T_{\la k}\check{\psi}(x)
	\ee
	with unconditional convergence in $\calS(\rd)$. Moreover, for any subset $\G\subset \zd$
    and $N>\frac{d}{p\wedge 1}$,
    we have the estimate
    \ben\label{lm-Fourierexp-standard-cd2}
    \la^d\|\sum_{k\in \G}f(\la k)T_{\la k}\check{\psi}(x)\|_{L^p(\rd)}\lesssim 
    \la^{d/p}\|\psi(\frac{\cdot}{\la})\|_{\calC^N}\|\{f(\la k)\}_{k\in \G}\|_{l^p(\zd)},
    \een
    where the implicit constant is independent of $f$, $\psi$ and $\la$.
 For every fixed $p\in (0,\fy]$, we have the equivalent relation
	\ben\label{lm-Fourierexp-standard-cd1}
	\|f\|_{L^p(\rd)}\sim_r \la^{d/p}\|\{f(\la k)\}_{k\in \zd}\|_{l^p(\zd)},\ \ \ f\in \calS_{B(0,r/2)}(\rd),
	\een
 where the implicit constant is independent of $f$ and $\la$.
\end{lemma}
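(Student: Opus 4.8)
The plan is to derive everything from a single periodization (Fourier series) identity and then to push it through the whole quasi-Banach range $0<p\le\infty$ by a rescaling to unit scale; that rescaling is the device which keeps every constant independent of $\lambda$.

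\emph{The expansion.} Since $\text{supp}\,\hat f\subset B(0,r/2)$ and $\psi\equiv1$ there, I have $\hat f=\psi\hat f$. Because $\lambda\le1$, the cube $Q_\lambda=[-\tfrac1{2\lambda},\tfrac1{2\lambda}]^d$ has side $1/\lambda\ge1>r$ and contains $\text{supp}\,\hat f$ strictly, so expanding $\hat f$ as a Fourier series on $Q_\lambda$ produces no aliasing. The Fourier coefficients come out to be $\lambda^d f(\pm\lambda k)$; choosing $\psi$ radial makes $\check\psi$ even, so after reindexing the reflection built into the convention $T_xf(t)=f(x-t)$ is immaterial and multiplying the series by $\psi$ and applying $\scrF^{-1}$ yields $f=\lambda^d\sum_{k\in\zd}f(\lambda k)T_{\lambda k}\check\psi$. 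Unconditional convergence in $\calS(\rd)$ is then automatic: $f\in\calS$ forces $|f(\lambda k)|\lesssim_M(1+|\lambda k|)^{-M}$ for every $M$, while each Schwartz seminorm of $T_{\lambda k}\check\psi$ grows only polynomially in $|\lambda k|$, so the series is absolutely convergent in every seminorm.

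\emph{The estimate \eqref{lm-Fourierexp-standard-cd2}.} I rescale $x=\lambda z$ to turn the scale-$\lambda$ lattice $\lambda\zd$ into $\zd$. Setting $\Psi_\lambda:=\scrF^{-1}[\psi(\cdot/\lambda)]$, the left-hand side becomes $\lambda^{d/p}\|\sum_{k\in\Gamma}f(\lambda k)\Psi_\lambda(\cdot-k)\|_{L^p(\rd)}$. Since $\psi(\cdot/\lambda)$ is supported in the fixed ball $B(0,1/2)$, integrating by parts gives $|z|^N|\Psi_\lambda(z)|\lesssim\|\psi(\cdot/\lambda)\|_{\calC^N}$, hence $|\Psi_\lambda(z)|\lesssim\|\psi(\cdot/\lambda)\|_{\calC^N}(1+|z|)^{-N}$, an honest decay in $z$ with amplitude $\|\psi(\cdot/\lambda)\|_{\calC^N}$. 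It then remains to prove the summation estimate $\|\sum_k a_k\Psi_\lambda(\cdot-k)\|_{L^p}\lesssim\|\psi(\cdot/\lambda)\|_{\calC^N}\|\{a_k\}\|_{l^p(\zd)}$ for $N>d/(p\wedge1)$: for $p\le1$ by the $p$-triangle inequality together with $\|\Psi_\lambda\|_{L^p}\lesssim\|\psi(\cdot/\lambda)\|_{\calC^N}$ (finite exactly when $Np>d$); for $p\ge1$ by dominating pointwise by a discrete convolution with the $\ell^1$ kernel $(1+|\cdot|)^{-N}$, applying Young's inequality, and integrating over unit cubes.

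\emph{The equivalence \eqref{lm-Fourierexp-standard-cd1}.} The key observation is that $g:=f(\lambda\cdot)$ has $\hat g$ supported in $B(0,\lambda r/2)\subset B(0,r/2)$ because $\lambda\le1$, so $g\in\calS_{B(0,r/2)}(\rd)$ and its integer samples are $g(k)=f(\lambda k)$. Both directions thus reduce to the case $\lambda=1$. The bound $\|f\|_{L^p}\lesssim\lambda^{d/p}\|\{f(\lambda k)\}\|_{l^p}$ follows from $\|g\|_{L^p}=\lambda^{-d/p}\|f\|_{L^p}$ and \eqref{lm-Fourierexp-standard-cd2} applied to $g$ at $\lambda=1$, where the constant $\|\psi\|_{\calC^N}$ is fixed, depending only on $r$ and $N$. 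The reverse bound $\lambda^{d/p}\|\{f(\lambda k)\}\|_{l^p}\lesssim\|f\|_{L^p}$ reduces to the fixed-scale sampling inequality $\|\{g(k)\}\|_{l^p}\lesssim\|g\|_{L^p}$ for functions band-limited to $B(0,r/2)$, a Plancherel--P\'olya/Nikolskii inequality obtained by localizing Lemma \ref{lm-ebd-Lp} to unit cubes and summing with finite overlap.

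The main obstacle throughout is the quasi-Banach range $0<p<1$, where Minkowski's and Young's inequalities are unavailable and where naively applying $p$-subadditivity to the heavily overlapping scale-$\lambda$ translates loses powers of $\lambda$; indeed the coefficient sequence $\{f(\lambda k)\}$ is not arbitrary but band-limited, and the crude bound ignores this. The two rescalings above are exactly what sidestep the difficulty: passing to unit-separated translates in \eqref{lm-Fourierexp-standard-cd2} makes the $p$-triangle inequality order-sharp, while the substitution $f\mapsto f(\lambda\cdot)$ in \eqref{lm-Fourierexp-standard-cd1} removes all $\lambda$-dependence, leaving only the fixed-scale sampling inequality to be invoked.
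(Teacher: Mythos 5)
Your proposal is correct and follows essentially the same route as the paper's proof: a periodization/Fourier-series identity for the expansion (with the same no-aliasing check), rescaling to the unit lattice together with the kernel decay $|\scrF^{-1}[\psi(\cdot/\la)](z)|\lesssim\|\psi(\cdot/\la)\|_{\calC^N}\langle z\rangle^{-N}$ and a discrete-convolution bound for \eqref{lm-Fourierexp-standard-cd2}, and the dilation $f\mapsto f(\la\cdot)$ to reduce \eqref{lm-Fourierexp-standard-cd1} to the unit scale. The one point of divergence is the reverse sampling inequality $\|\{g(k)\}_{k\in\zd}\|_{l^p(\zd)}\lesssim\|g\|_{L^p}$ at $\la=1$, which you outsource to the classical Plancherel--P\'olya/Nikolskii inequality (legitimate, though your parenthetical sketch of ``localizing Lemma \ref{lm-ebd-Lp} to unit cubes'' is not literally viable, since restricting to a cube destroys band-limitation); the paper instead proves it directly by writing $g(k)=\int g(y)\check{\psi}(k-y)\,dy$ and applying Lemma \ref{lm-ebd-Lp} to the globally band-limited product $g\,\check{\psi}(k-\cdot)$ when $p\le 1$, and H\"older's inequality when $p>1$.
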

\begin{proof}
	Using the expansion of Fourier series, we write
	\be
	\hat{f}(\xi/\la)=\la^d\sum_{k\in \zd}f(-\la k)e^{2\pi ik\cdot \xi}\psi(\xi/\la)=\la^d\sum_{k\in \zd}f(\la k)e^{-2\pi ik\cdot \xi}\psi(\xi/\la),
	\ee
	which implies that
	\be
	\hat{f}(\xi)=\la^d\sum_{k\in \zd}f(\la k)e^{-2\pi i\la k\cdot \xi}\psi(\xi),
	\ee
	where the series converges uniformly for all $\xi\in \rd$, and unconditional convergences in $\calS(\rd)$ by the rapidly decreasing of $f$.
	Applying inverse Fourier transform, we have the desired expansion in the sense of $\calS(\rd)$:
	\be
	f(x)=\la^d\sum_{k\in \zd}f(\la k)T_{\la k}\check{\psi}(x).
	\ee
	Next, we turn to the estimate of \eqref{lm-Fourierexp-standard-cd2}. 
    Let $\psi_{\la}(\xi)=\psi(\xi/\la)$. The inequality \eqref{lm-Fourierexp-standard-cd2} is equivalent to
    \be
    \|\sum_{k\in \G}f(\la k)T_{k}\check{\psi_\la}(x)\|_{L^p(\rd)}
    \lesssim 
    \|\psi_{\la}\|_{\calC^N}    \|\{f(\la k)\}_{k\in \G}\|_{l^p(\zd)}.
    \ee
    Note that
    \be
    |\check{\psi_\la}(x)|\lesssim \|\psi_\la\|_{\calC^N}\lan x\ran^{-N}.
    \ee
    We have
    \be
    \begin{split}
        |\sum_{k\in \G}f(\la k)T_{k}\check{\psi_\la}(x)|
        \lesssim
        \|\psi_\la\|_{\calC^N}|\sum_{k\in \G}|f(\la k)|\lan x-k\ran^{-N}|.
    \end{split}
    \ee
    For $N>\frac{d}{p\wedge 1}$, we conclude that
    \be
    \begin{split}
        \|\sum_{k\in \G}f(\la k)T_{k}\check{\psi_\la}(x)\|_{L^p(\rd)}
        \lesssim &
        \|\psi_\la\|_{\calC^N}\|\{\sum_{k\in \G}|f(\la k)|\lan l-k\ran^{-N}\}_{l\in \zd}\|_{l^p(\zd)}
        \\
        \lesssim &
	\|\psi_\la\|_{\calC^N}\|\{f(\la k)\}_{k\in \G}\|_{l^p(\zd)}\|\{\lan k\ran^{-N}\}_k\|_{l^{p\wedge 1}(\zd)}
        \\
        \lesssim &
        \|\psi_\la\|_{\calC^N}\|\{f(\la k)\}_{k\in \G}\|_{l^p(\zd)}.
    \end{split}
    \ee
	where we use Young's inequality $l^p(\zd)\ast l^{p\wedge 1}(\zd)\subset l^p(\zd)$.

 Finally, we come to prove the equivalent relation \eqref{lm-Fourierexp-standard-cd1}. 
 Using the expansion of $f$ with $\la=1$, we write
	\be
	|f(x)|\lesssim \sum_{k\in \zd}|f(k)T_k\check{\psi}(x)|
	\lesssim 
	\|\psi\|_{\calC^N}\sum_{k\in \zd}|f(k)|\cdot \lan x-k\ran^{-N}
	\lesssim
	\|\psi\|_{\calC^N}\sum_{k\in \zd}|f(k)|\cdot \lan l-k\ran^{-N}
	\ee
    for all $x\in Q_l:=l+[-\frac{1}{2}, \frac{1}{2}]^d$.
	Then, for $N>\frac{d}{p\wedge 1}$, we have
	\be
	\|f\|_{L^p}
	\lesssim 
	\|\psi\|_{\calC^N}\big\|\big\{\sum_{k\in \zd}|f(k)|\cdot \lan l-k\ran^{-N}\big\}_{l\in \zd}\big\|_{l^p(\zd)}
	\lesssim 
	\|\psi\|_{\calC^N}\|\{f(k)\}_{k\in \zd}\|_{l^p(\zd)}.
	\ee
	
	On the other hand, write
	\be
	f(k)=\scrF^{-1}(\hat{f}\psi)(k)=\int_{\rd}f(y)\check{\psi}(k-y)dy.
	\ee
	For $p\leq 1$, we use the embedding $L^p_{\Om}\subset L^1_{\Om}$ to obtain that
	\be
	|f(k)|\leq \|f(y)\check{\psi}(k-y)\|_{L^1}\lesssim \|f(y)\check{\psi}(k-y)\|_{L^p}
	\lesssim \|\psi\|_{\calC^N}\|f(y)\lan k-y\ran^{-N}\|_{L^p}.
	\ee
	For $p> 1$, we use the H\"{o}lder inequality to obtain that
	\be
    \begin{split}
        |f(k)|\leq \|f(y)\check{\psi}(k-y)\|_{L^1}
	\lesssim &
	\|\psi\|_{\calC^{N+M}}\|f(y)\lan k-y\ran^{-(N+M)}\|_{L^1}
     \\
	\lesssim &
	\|\psi\|_{\calC^{N+M}}\|f(y)\lan k-y\ran^{-N}\|_{L^p}.
    \end{split}
	\ee
	From the above two estimates, for sufficiently large $N$ and $M$ we find
	\be
	\begin{split}
		\|\{f(k)\}_{k\in \zd}\|_{l^p(\zd)}
		\lesssim
		\|\psi\|_{\calC^{N+M}}\|\{\|f(y)\lan k-y\ran^{-N}\|_{L^p}\}_{k\in \zd}\|_{l^p(\zd)}
		\lesssim \|\psi\|_{\calC^{N+M}}\|f\|_{L^p}.
	\end{split}
	\ee	
 Note that the function $\psi$ can be chosen only depends on $r$. Thus, the 
 implicit constant only depends on $r$.
	Now, we have verified the estimate
	\be
	\|f\|_{L^p(\rd)}\sim \|\{f(k)\}_{k\in \zd}\|_{l^p(\zd)},\ \ \ f\in \calS_{B(0,r/2)}(\rd).
	\ee
	The desired estimate \eqref{lm-Fourierexp-standard-cd1} follows by a dilation argument:
 \be
\|f\|_{\lpd}=\la^{d/p}\|f(\la \cdot)\|_{\lpd}\sim \la^{d/p}\|\{f(\la k)\}_{k\in \zd}\|_{l^p(\zd)},
 \ee
 where we use the fact that $f(\la \cdot)\subset \calS_{B(0,r/2)}(\rd)$ for all $\la\in (0,1]$.
\end{proof}

\begin{proposition}\label{pp-Fourierexp-standard}
	Let $p\in (0,\fy]$, $r\in (0,1)$.
	Let $\psi$ be a smooth function supported on $B(0,1/2)$, satisfying $\psi\equiv 1$ on $B(0,r/2)$.
	Let $\la\in (0,1]$.
	For any $f\in L^p_{B(0,r/2)}(\rd)$, we have the expansion
	\ben\label{pp-Fourierexp-standard-cd0}
	f(x)=\la^d\sum_{k\in \zd}f(\la k)T_{\la k}\check{\psi}(x)
	\een
	with unconditional convergence in $L^p(\rd)$ for $p<\fy$, and weak-star convergence in $L^{\fy}(\rd)$. 
 Moreover, for any subset $\G\subset \zd$
    and $N>\frac{d}{p\wedge 1}$,
    we have the estimate
    \ben\label{pp-Fourierexp-standard-cd2}
    \la^d\|\sum_{k\in \G}f(\la k)T_{\la k}\check{\psi}(x)\|_{L^p(\rd)}\lesssim 
    \la^{d/p}\|\psi(\frac{\cdot}{\la})\|_{\calC^N}\|\{f(\la k)\}_{k\in \G}\|_{l^p(\zd)},
    \een
    where the implicit constant is independent of $f$, $\psi$ and $\la$.
	For every fixed $p\in (0,\fy]$, we have the equivalent relation
	\ben\label{pp-Fourierexp-standard-cd1}
	\|f\|_{L^p(\rd)}\sim_r  \la^{d/p}\|\{f(\la k)\}_{k\in \zd}\|_{l^p(\zd)},\ \ \ f\in L^p_{B(0,\frac{r}{2})}(\rd),
	\een
  where the implicit constant is independent of $f$ and $\la$.
	Conversely, if $g\in \calS'_{B(0,\frac{r}{2})}(\rd)$ satisfies $\{g(\la k)\}_{k\in \zd}\in l^p(\zd)$ for some $\la \in (0,1]$, we have $g \in L^p_{B(0,\frac{r}{2})}(\rd)$.
\end{proposition}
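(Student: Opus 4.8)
The plan is to upgrade Lemma \ref{lm-Fourierexp-standard} from $\calS_{B(0,r/2)}$ to $L^p_{B(0,r/2)}$ by separating the two purely pointwise estimates from the convergence statement. First I observe that every $f\in L^p_{B(0,r/2)}$ has a compactly supported Fourier transform, hence is the restriction of a smooth function, so the samples $f(\la k)$ are well defined; for $p<1$ this uses in addition the embedding $L^p_{B(0,R)}\subset L^\fy_{B(0,R)}$ furnished by Lemma \ref{lm-ebd-Lp}. I would then note that the proofs of the synthesis estimate \eqref{lm-Fourierexp-standard-cd2} and of the sampling direction ``$\gtrsim$'' of \eqref{lm-Fourierexp-standard-cd1} in Lemma \ref{lm-Fourierexp-standard} use only the pointwise bound on $\check\psi(\cdot/\la)$, Young's inequality, and the membership $f\in L^p$; none of them uses that $f$ is Schwartz. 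Hence both \eqref{pp-Fourierexp-standard-cd2} and the estimate $\la^{d/p}\|\{f(\la k)\}_{k\in\zd}\|_{l^p}\lesssim\|f\|_{L^p}$ carry over verbatim to $f\in L^p_{B(0,r/2)}$.

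Next I would prove convergence of the series and the reproducing property. For $p<\fy$, combining \eqref{pp-Fourierexp-standard-cd2} with the sampling bound shows that the partial sums $S_\G f:=\la^d\sum_{k\in\G}f(\la k)T_{\la k}\check\psi$ are Cauchy in $L^p$ as $\G\uparrow\zd$, since for $\G\subset\G'$ the increment $S_{\G'}f-S_\G f$ is controlled by the $l^p$-tail $\|\{f(\la k)\}_{k\in\G'\setminus\G}\|_{l^p}\to0$; by completeness of $L^p$ let $F$ be the limit, which proves unconditional convergence. The crux is to identify $F=f$. For this I would pair $S_\G f$ with an arbitrary $h\in\calS(\rd)$: since $T_y$ is self-adjoint, $\langle S_\G f,h\rangle=\la^d\sum_{k\in\G}f(\la k)(\check\psi\ast h)(\la k)$. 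The function $\Phi:=f\cdot(\check\psi\ast h)$ is Schwartz (the band-limited factor $f$ is smooth with polynomially bounded derivatives, and $\check\psi\ast h\in\calS$), and $\hat\Phi$ is supported in $B(0,(1+r)/2)\subset B(0,1)$. Because $\la\le1$, Poisson summation collapses to the single frequency $m=0$, giving the exact quadrature identity $\la^d\sum_{k\in\zd}\Phi(\la k)=\int_{\rd}\Phi=\int_{\rd}f(\check\psi\ast h)$; and since $\psi\equiv1$ on $\mathrm{supp}\,\hat f\subset B(0,r/2)$, the last integral equals $\langle f,h\rangle$. Thus $\langle S_\G f,h\rangle\to\langle f,h\rangle$ for every $h\in\calS$, forcing $F=f$ in $\calS'$ and hence \eqref{pp-Fourierexp-standard-cd0} in $L^p$. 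The norm equivalence \eqref{pp-Fourierexp-standard-cd1}, with constant independent of $\la$, then follows exactly as in Lemma \ref{lm-Fourierexp-standard}: one first treats $\la=1$ (where the now-established identity $f=S_{\zd}f$ supplies the remaining ``$\lesssim$'' direction with an $r$-dependent constant) and transfers to $\la\in(0,1]$ through the dilation $f\mapsto f(\la\cdot)\in L^p_{B(0,r/2)}$.

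For $p=\fy$ the space $\calS_{B(0,r/2)}$ is no longer dense, so I would argue by weak-star convergence. The synthesis estimate gives $\sup_\G\|S_\G f\|_{L^\fy}\lesssim\|\{f(\la k)\}\|_{l^\fy}\le\|f\|_{L^\fy}$ (with constant depending on the fixed $\la,\psi$), while the quadrature identity above yields $\langle S_\G f,h\rangle\to\langle f,h\rangle$ for all $h\in\calS$. Since $\calS$ is dense in $L^1$ and the net $\{S_\G f\}$ is uniformly bounded in $L^\fy=(L^1)^*$, this upgrades to testing against every $h\in L^1$, which is exactly weak-star convergence in $L^\fy$.

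Finally, the converse is the synthesis direction made rigorous. Given $g\in\calS'_{B(0,r/2)}$, which is automatically a smooth function, with $\{g(\la k)\}\in l^p$, the estimate \eqref{pp-Fourierexp-standard-cd2} (valid for any function with well-defined samples) shows that $S_\G g$ is $L^p$-Cauchy for $p<\fy$, and uniformly $L^\fy$-bounded for $p=\fy$, converging to some $F$. The quadrature identity again gives $S_\G g\to g$ in $\calS'$, the defining series being absolutely convergent because $\{g(\la k)\}\in l^p$ is paired with the rapidly decreasing samples of $\check\psi\ast h$. Identifying the $\calS'$-limit with the $L^p$ (respectively weak-star $L^\fy$) limit then yields $g=F\in L^p_{B(0,r/2)}$. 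I expect the main obstacle to be precisely this identification of the limit, i.e.\ the exact reproducing/quadrature identity obtained from Poisson summation, which is the one place where band-limitedness and the constraint $\la\le1$ are used essentially; the secondary technical point is the passage from $L^p$-convergence to $\calS'$-convergence when $p<1$, which I would handle through the Nikolskii-type inequality of Lemma \ref{lm-ebd-Lp}.
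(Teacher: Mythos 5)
Your proposal is correct, but it takes a genuinely different route from the paper's. The paper never re-opens the proof of Lemma \ref{lm-Fourierexp-standard}: it regularizes, setting $f_N=\rho_N f$ with $\rho_N=\rho(\cdot/N)$, $\rho(0)=1$, $\mathrm{supp}\,\hat\rho\subset B(0,1)$, and exploits the slack $f\in L^p_{B(0,r/2-\ep)}$ to keep $\mathrm{supp}\,\widehat{f_N}\subset B(0,r/2)$; it then applies the Lemma to each Schwartz function $f_N$ and recovers the norm equivalence, the tail estimate (hence unconditional convergence), and the converse direction for $f$ itself by Fatou's lemma and dominated convergence as $N\to\fy$. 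You instead extend the Lemma's two estimates directly to band-limited $L^p$ functions (correctly observing that their proofs use only the decay of $\check\psi$ and Young's inequality, not Schwartz-ness), deduce that the partial sums form a Cauchy net, and then identify the limit with $f$ by pairing against $h\in\calS(\rd)$ and invoking an exact Poisson-summation quadrature identity for $\Phi=f\cdot(\check\psi\ast h)$, whose Fourier support lies in $B(0,(1+r)/2)$ so that only the $m=0$ term survives when $\la\le 1$. What the paper's route buys is that the Lemma is used as a black box, so no further Fourier-analytic input is needed beyond the cutoff bookkeeping; what your route buys is a self-contained identification of the limit in which the reproducing property, the converse statement, and the $p=\fy$ weak-star convergence are all handled by one duality mechanism (the paper disposes of $p=\fy$ with ``a similar argument with suitable modification''). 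If you write this up, make explicit the one step you treat as automatic: for non-Schwartz $f$, the sampling bound's starting identity $f(\la k)=\int_{\rd} f(y)\check{\psi}(\la k-y)\,dy$ requires the distributional identity $\psi\hat f=\hat f$ (disjointness of supports) together with boundedness of $f$ from Lemma \ref{lm-ebd-Lp}, so that the convolution integral converges absolutely and agrees pointwise with the smooth representative of $f$; with that, and your use of Lemma \ref{lm-ebd-Lp} to pass from $L^p$-convergence to uniform convergence when $p<1$, the argument is complete.
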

\begin{proof}
    For $f\in L^p_{B(0,r/2)}(\rd)$, we have $f\in L^p_{B(0,r/2-\ep)}(\rd)$ for sufficiently small $\ep>0$.
	Choose a function $\r\in \calS(\rd)$ such that $\r(0)=1$ and $\text{supp}\hat{\r}\subset B(0,1)$. Let $\r_N(x)=\r(x/N)$.
	Denote $f_N=\r_Nf$. We have $f_N\in \calS(\rd)$ and
	$\text{supp}\hat{f_N}\subset B(0,r/2-\ep)+B(0,1/N)\subset B(0,r/2)$ for sufficiently large $N$. Using Lemma \ref{lm-Fourierexp-standard} for $f_N$, we obtain the estimate
	\be
	\|f_N\|_{L^p(\rd)}\sim_r  \la^{d/p}\|\{f_N(\la k)\}_{k\in \zd}\|_{l^p(\zd)}.
	\ee
	The desired estimate \eqref{pp-Fourierexp-standard-cd1} follows by
	\be
	\begin{split}
	\la^{d/p}\|\{f(\la k)\}_{k\in \zd}\|_{l^p(\zd)}
	\lesssim & 
	\liminf_{N\rightarrow \fy}\la^{d/p}\|\{f_N(\la k)\}_{k\in \zd}\|_{l^p(\zd)}
	\\
	\sim_r &
	\liminf_{N\rightarrow \fy}\|f_N\|_{L^p(\rd)}
	 \lesssim \|f\|_{L^p(\rd)},
	\end{split}
	\ee
	and that
		\be
	\begin{split}
     \|f\|_{L^p(\rd)}
     \leq\liminf_{N\rightarrow \fy}\|f_N\|_{L^p(\rd)}
     \sim \liminf_{N\rightarrow \fy}\la^{d/p}\|\{f_N(\la k)\}_{k\in \zd}\|_{l^p(\zd)}
     \lesssim \la^{d/p}\|\{f(\la k)\}_{k\in \zd}\|_{l^p(\zd)},
	\end{split}
	\ee
    where the implicit constants in the above three estimates are all independent of $f$ and $\la$.
	Moreover, using \eqref{lm-Fourierexp-standard-cd2} in Lemma \ref{lm-Fourierexp-standard}, 
	for any finite subset $E\subset \zd$
	we obtain that for $p\in(0, \i]$
	\be
	\begin{split}
	\|f_N(x)-\la^d\sum_{k\in E}f_N(\la k)T_{\la k}\check{\psi}(x)\|_{L^p(\rd)}
	= &
	\|\la^d\sum_{k\in E^c}f_N(\la k)T_{\la k}\check{\psi}(x)\|_{L^p(\rd)}
	\\
	\lesssim &
	\|\{f_N(\la k)\}_{k\in E^c}\|_{l^p(\zd)}\leq \|\r\|_{L^{\fy}}\|\{f(\la k)\}_{k\in E^c}\|_{l^p(\zd)}.
	\end{split}
	\ee
	Letting $N\rightarrow \fy$ and applying the Lebesgue dominated convergence theorem, we conclude that for $p\in(0, \i)$
		\be
	\begin{split}
		\|f(x)-\la^d\sum_{k\in E}f(\la k)T_{\la k}\check{\psi}(x)\|_{L^p(\rd)}
		\lesssim 
		\|\r\|_{L^{\fy}}\|\{f(\la k)\}_{k\in E^c}\|_{l^p(\zd)}.
	\end{split}
	\ee
	Hence, \eqref{pp-Fourierexp-standard-cd0} is valid with unconditional convergence in $L^p(\rd)$ for $p<\fy$.
	The case $p=\fy$ can be verified by a similar argument with suitable modification.

    Note that the inequality \eqref{pp-Fourierexp-standard-cd2} can be deduced by the same method as in \eqref{lm-Fourierexp-standard-cd2}.
    
	Finally, we turn to the inverse direction. For $g\in \calS'_{B(0,\frac{r}{2})}$, we write $g_N=\r_Ng$.
	Similarly, we have $g_N\in \calS(\rd)$ and $\text{supp}\hat{g_N}\subset B(0,\frac{r}{2})$ for sufficiently large $N$.
	We also have
	\be
	\|g_N\|_{L^p(\rd)}\sim \la^{d/p}\|\{g_N(\la k)\}_{k\in \zd}\|_{l^p(\zd)}\leq \la^{d/p}\|\r\|_{L^{\fy}}\|\{g(\la k)\}_{k\in \zd}\|_{l^p(\zd)}.
	\ee
	The desired conclusion follows by letting $N\rightarrow \fy$ and using the Fatou lemma
	\be
	\|g\|_{L^p(\rd)} \lesssim \liminf_{N\rightarrow \fy}\|g_N\|_{L^p(\rd)}\lesssim \la^{d/p}\|\{g(\la k)\}_{k\in \zd}\|_{l^p(\zd)}.
	\ee
\end{proof}

\begin{proposition}\label{pp-Fourierexp-alpha}
	Let $p\in (0,\fy]$, $r>1$. Let $\psi$ be a smooth function supported on $B(0,1/2)$, satisfying $\psi\equiv 1$ on $B(0,\frac{1}{2r})$.
	Let $\la\in (0,1]$.
	For any $f\in L^p_{B(\xi_0,R)}(\rd)$, we have the expansion
	\ben\label{pp-Fourierexp-alpha-cd0}
	f(x)
	=
	\Big(\frac{\la}{2rR}\Big)^{d}
	 \sum_{k\in \zd}
	  f\Big(\frac{\la k}{2rR}\Big) T_{\frac{\la k}{2rR}} \scrF^{-1}(\psi_{\xi_0,2rR})(x),\ \ \ \  
	 \psi_{\xi_0,2rR}(\xi)=\psi\Big(\frac{\xi-\xi_0}{2rR}\Big),
	\een
	with unconditional convergence in $L^p(\rd)$ for $p<\fy$, and weak-star convergence in $L^{\fy}(\rd)$. 
	Moreover, for any subset $\G\subset \zd$
    and $N>\frac{d}{p\wedge 1}$,
    we have the estimate
    \ben\label{pp-Fourierexp-alpha-cd2}
    \Big(\frac{\la}{rR}\Big)^{d}\|\sum_{k\in \G}
	  f\Big(\frac{\la k}{2rR}\Big) T_{\frac{\la k}{2rR}} \scrF^{-1}(\psi_{\xi_0,2rR})(x)\|_{L^p(\rd)}\lesssim 
    \|\psi(\frac{\cdot}{\la})\|_{\calC^N}\Big(\frac{\la}{rR}\Big)^{d/p} 
	  \Big\|\Big\{f\Big(\frac{\la k}{2rR}\Big)\Big\}_{k\in \G} \Big\|_{l^p(\zd)},
    \een
    where the implicit constant is independent of $f$, $\psi$ and $\la$.
    For every fixed $p\in (0,\fy]$, we have the equivalent relation
	\ben\label{pp-Fourierexp-alpha-cd1}
	\|f\|_{L^p(\rd)}
	\sim_r
	  \Big(\frac{\la}{rR}\Big)^{d/p} 
	  \Big\|\Big\{f\Big(\frac{\la k}{2rR}\Big)\Big\}_{k\in \zd} \Big\|_{l^p(\zd)},
	\ \ \ f\in L^p_{B(\xi_0,R)}(\rd),
	\een
    where the implicit constant is independent of $f$, $\la$, $\xi_0$, $r$ and $R$.
    If $f\in \calS_{B(\xi_0,R)}(\rd)$ the expansion \eqref{pp-Fourierexp-alpha-cd0} unconditional converges in the topology of $\calS(\rd)$.
	
	Conversely, if $g\in \calS'_{B(\xi_0,R)}(\rd)$ satisfies $\{g(\frac{\la k}{2rR})\}_{k\in \zd}\in l^p(\zd)$ for some $\la \in (0,1]$, we have $g \in L^p_{B(\xi_0,R)}(\rd)$.
\end{proposition}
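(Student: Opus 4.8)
The plan is to reduce the entire statement to Proposition~\ref{pp-Fourierexp-standard} (and, for the assertion about $f\in\calS_{B(\xi_0,R)}$, to Lemma~\ref{lm-Fourierexp-standard}) by conjugating with a modulation and a dilation that move the off-centered frequency ball $B(\xi_0,R)$ onto a ball centered at the origin. Writing $c=2rR$, I would set $g(x)=e^{-2\pi i x\cdot\xi_0}f(x)$, so that $\hat g(\xi)=\hat f(\xi+\xi_0)$ is supported in $B(0,R)$, and then $F(x)=g(x/c)$, whose Fourier transform is supported in $B(0,R/c)=B(0,\tfrac{1}{2r})$. Since $r>1$ we have $\tfrac{1}{2r}<\tfrac12$, and choosing the parameter of Proposition~\ref{pp-Fourierexp-standard} to be $r'=1/r\in(0,1)$ makes its hypothesis ``$\psi\equiv 1$ on $B(0,\tfrac{1}{2r})=B(0,r'/2)$'' agree \emph{exactly} with the $\psi$ prescribed here. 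Thus $F\in L^p_{B(0,r'/2)}(\rd)$ and Proposition~\ref{pp-Fourierexp-standard} applies to $F$ verbatim.

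Applying the centered expansion to $F$ gives $F(x)=\la^d\sum_{k}F(\la k)\,T_{\la k}\check\psi(x)$, and it remains to push this identity back through the dilation and modulation. The one computation that does the work is the scaling--modulation identity for the inverse Fourier transform,
\be
\scrF^{-1}(\psi_{\xi_0,2rR})(x)=(2rR)^{d}e^{2\pi i x\cdot\xi_0}\check\psi(2rRx),
\ee
obtained by the substitution $\eta=(\xi-\xi_0)/(2rR)$. Combining this with the two elementary relations $F(\la k)=e^{-2\pi i(\la k/c)\cdot\xi_0}f(\la k/c)$ and $f(z)=e^{2\pi i z\cdot\xi_0}F(cz)$, every phase factor $e^{\pm2\pi i(\cdots)\cdot\xi_0}$ cancels identically, leaving precisely the claimed formula \eqref{pp-Fourierexp-alpha-cd0}, with sample points $\la k/c=\la k/(2rR)$ and prefactor $(\la/c)^d=(\la/(2rR))^d$. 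I would carry out this cancellation explicitly, since it is the only place where a stray exponential could survive and spoil the clean atom $T_{\frac{\la k}{2rR}}\scrF^{-1}(\psi_{\xi_0,2rR})$.

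The quantitative statements then follow by bookkeeping the same two scalings. Modulation has unit modulus, so $|F(\la k)|=|f(\la k/c)|$ and hence the sampled $l^p$ norms coincide; the dilation contributes only the explicit factor $\|F\|_{L^p}=c^{d/p}\|f\|_{L^p}$. Substituting these into \eqref{pp-Fourierexp-standard-cd1} and \eqref{pp-Fourierexp-standard-cd2} produces \eqref{pp-Fourierexp-alpha-cd1} and \eqref{pp-Fourierexp-alpha-cd2}; the mismatches between $2rR$ and $rR$ in the stated prefactors are absolute powers of $2$ and are absorbed into the implicit constants. Because modulation enters $\xi_0$ only through a unimodular factor and the dilation constant is tracked exactly, the implicit constants are uniform in $\xi_0$, $R$ and $\la$, as required; the dependence on the fixed auxiliary parameter $r$ enters only through the single choice $r'=1/r$ in Proposition~\ref{pp-Fourierexp-standard}. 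The three convergence modes (unconditional in $L^p$ for $p<\fy$, weak-$\star$ in $L^{\fy}$, and, when $f\in\calS_{B(\xi_0,R)}(\rd)$, in the topology of $\calS(\rd)$ via Lemma~\ref{lm-Fourierexp-standard}) transfer because modulation and dilation are topological isomorphisms of $L^p(\rd)$ and of $\calS(\rd)$. The converse is the same reduction read backwards: from $g\in\calS'_{B(\xi_0,R)}(\rd)$ one forms $G(x)=e^{-2\pi i(x/c)\cdot\xi_0}g(x/c)$, which has $\hat G$ supported in $B(0,\tfrac{1}{2r})$ and $|G(\la k)|=|g(\la k/(2rR))|$, so $\{G(\la k)\}\in l^p(\zd)$; the converse half of Proposition~\ref{pp-Fourierexp-standard} gives $G\in L^p_{B(0,1/(2r))}(\rd)$, and undoing the transform yields $g\in L^p_{B(\xi_0,R)}(\rd)$. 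Here the pointwise samples are meaningful because $\hat g$ is compactly supported, so $g$ is (the restriction of) an entire function of exponential type.

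The main obstacle I anticipate is clerical rather than conceptual: keeping the modulation--dilation conjugation perfectly consistent so that (i) the phases cancel to give the exact atom $T_{\frac{\la k}{2rR}}\scrF^{-1}(\psi_{\xi_0,2rR})$ rather than a phase-shifted variant, and (ii) the implicit constants come out genuinely independent of the moving parameters $\xi_0$ and $R$ --- the independence from $\xi_0$ being exactly the reason one conjugates by a modulation rather than a mere translation. Everything else is a direct transcription of Proposition~\ref{pp-Fourierexp-standard}.
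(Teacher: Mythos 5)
Your proposal is correct and follows essentially the same route as the paper: the paper's auxiliary function $\hat h(\xi)=\hat f(2rR\xi+\xi_0)$ is exactly your modulation--dilation conjugate $F$ (up to the harmless factor $(2rR)^d$), and both arguments then transfer the expansion, the estimates \eqref{pp-Fourierexp-standard-cd2}--\eqref{pp-Fourierexp-standard-cd1}, the convergence modes, and the converse from Proposition \ref{pp-Fourierexp-standard} (and Lemma \ref{lm-Fourierexp-standard} for the Schwartz case), with the phases recombining into the atom $T_{\frac{\la k}{2rR}}\scrF^{-1}(\psi_{\xi_0,2rR})$ just as in the paper's computation.
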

\begin{proof}
	Let $\hat{h}(\xi)=\hat{f}(2rR\xi+\xi_0)$.
	We have $h\in L^p_{B(0,\frac{1}{2r})}$. Using Proposition \ref{pp-Fourierexp-standard}, we have the expansion
	\be
    h(x)=\la^d\sum_{k\in \zd}h(\la k)T_{\la k}\check{\psi}(x).
    \ee
    By a direct calculation, we obtain
    \be
    h(x)=(2rR)^{-d}e^{\frac{-2\pi ix\cdot \xi_0}{2rR}}f(\frac{x}{2rR})
    \ee
    and 
    \be
    e^{\frac{-2\pi ix\cdot \xi_0}{2rR}}f(\frac{x}{2rR})=\la^d\sum_{k\in \zd}e^{\frac{-2\pi i\la k\cdot \xi_0}{2rR}}f(\frac{\la k}{2rR})T_{\la k}\check{\psi}(x).
    \ee
    Replacing $x$ by $2rRx$, we conclude 
    \be
    e^{-2\pi ix\cdot \xi_0}f(x)=\la^d\sum_{k\in \zd}e^{\frac{-2\pi i\la k\cdot \xi_0}{2rR}}f(\frac{\la k}{2rR})\check{\psi}(2rRx-\la k),
    \ee
    which implies the desired expansion by
    \be
    \begin{split}
    	f(x)
    	= &
    	\la^d\sum_{k\in \zd}e^{\frac{-2\pi i\la k\cdot \xi_0}{2rR}}f(\frac{\la k}{2rR})e^{2\pi ix\cdot \xi_0}\check{\psi}(2rRx-\la k)
    	\\
    	= &
    	\Big(\frac{\la}{2rR}\Big)^{d}
    	\sum_{k\in \zd}f(\frac{\la k}{2rR})T_{\frac{\la k}{2rR}}\big( (2rR)^de^{2\pi ix\cdot \xi_0}\check{\psi}(2rRx)\big)
    	\\
    	= &
    	\Big(\frac{\la}{2rR}\Big)^{d}
    	\sum_{k\in \zd}f(\frac{\la k}{2rR})T_{\frac{\la k}{2rR}}\scrF^{-1}(\psi_{\xi_0,2rR})(x).
    \end{split}
    \ee
    On the other hand, the desired norm estimate \eqref{pp-Fourierexp-alpha-cd1} follows by 
    \be
    \|h\|_{L^p(\rd)}\sim_r  \la^{d/p}\|\{h(\la k)\}_{k\in \zd}\|_{l^p(\zd)},
    \ee
    \be
    \|h\|_{L^p(\rd)}=(2rR)^{-d}\|f(\frac{x}{2rR})\|_{L^p(\rd)}\sim (rR)^{d(1/p-1)}\|f\|_{L^p(\rd)}
    \ee
    and
    \be
    \|\{h(\la k)\}_{k\in\zd}\|_{l^p(\zd)}
    \sim (rR)^{-d}\Big\|\Big\{f(\frac{\la k}{2rR})\Big\}_{k\in\zd}\Big\|_{l^p(\zd)}.
    \ee
    Using a dilation argument,  the inequality \eqref{pp-Fourierexp-alpha-cd2} is equivalent to
    \be
    \|\sum_{k\in \G}
	  f\Big(\frac{\la k}{2rR}\Big) T_{k} \scrF^{-1}(\psi(\frac{\cdot}{\la}-\frac{\xi_0}{2rR})))(x)\|_{L^p(\rd)}\lesssim 
    \|\psi(\frac{\cdot}{\la})\|_{\calC^N}
	  \Big\|\Big\{f\Big(\frac{\la k}{2rR}\Big)\Big\}_{k\in \zd} \Big\|_{l^p(\zd)},
    \ee 
    which can be deduced by the same method as in \eqref{pp-Fourierexp-standard-cd2}.

    The remaining results follow by a similar dilation argument.
\end{proof}

Now, we are in a position to give the boundedness kernel theorem.
Suppose that $r>1$.
Let $\{\va_{k}^{\al}\}_{k\in \zd}$ be a Schwartz function sequence satisfying that
\begin{equation}\label{dual-frame-alpha}
\begin{cases}
\va_k^{\al}(\xi)=1\ on\  B(\lan k\ran^{\frac{\al}{1-\al}}k, C\lan k\ran^{\frac{\al}{1-\al}}); \\
\text{supp}\va_k^{\al}\subset B(\lan k\ran^{\frac{\al}{1-\al}}k, rC\lan k\ran^{\frac{\al}{1-\al}}); \\
|\partial ^{\gamma }\va _{k}^{\alpha }(\xi )|\leq C_{\gamma}\langle
k\rangle ^{-\frac{\alpha |\gamma |}{1-\alpha }},\forall \xi \in \mathbb{R}%
^{d},\gamma \in \mathbb{N}^{d},
\end{cases}%
\end{equation}%
where $C$ is the constant in \eqref{pre-1}. 
In the subsequent part of this article, we will use the notation $\va_k^{\al}$ and $r$ with the meaning mentioned above. 

\begin{theorem}[Boundedness kernel theorem for $\alpha$-modulation spaces]\label{thm-BKa}
	Suppose that $p,q\in (0,\fy]$ satisfying $p\leq q\wedge 1$.
	Let $A\in \calL(\calS(\rd), \calS'(\rd))$ be a linear operator with
	distributional kernel $K_A\in \calS'(\rdd)$.	
	For any $\la\in (0, 1]$, denote $\b_k=\frac{\la}{2rC\lan k\ran^{\frac{\al}{1-\al}}}$ for all $k\in \zd$.
	The following statements are equivalent.
	\bn
	\item
	$A\in\calL(M^p_{\al}(\rd), M^q_{\al}(\rd))$;
	\item
	$\{\lan k\ran^{\frac{\al d}{1-\al}(1/p-1)}A(T_{\b_k k'} \check{\va_k^{\al}}): k,k' \in \zd\}$
	is a bounded subset of $M^{q}_{\al}(\rd)$;
	\item
	$\sup\limits_{k,k' \in \zd}\lan k\ran^{\frac{\al d}{1-\al}(1/p-1)}
		\big\|\big\{\lan l\ran^{\frac{-\al d}{(1-\al)q}}\lan K_A(z',z),  \overline{\check{\eta_l^{\al}}(\b_l l'-z')\otimes\check{\va_k^{\al}}(z-\b_k k')}  \ran\big\}_{l, l'\in \zd}\big\|_{l^q(\zdd)}<\fy$.
	\item 
	$\big\{
	  \big\{
	    \lan k\ran^{\frac{\al d}{1-\al}(1/p-1)}
	    \lan l\ran^{\frac{-\al d}{(1-\al)q}}\lan K_A(z',z),  \overline{\check{\eta_l^{\al}}(\b_l l'-z')\otimes\check{\va_k^{\al}}(z-\b_k k')}  \ran
	  \big\}_{l, l'\in \zd}: 
	  k,k' \in \zd
	 \big\}$
	is a bounded subset of $l^q(\zdd)$.
	\en
    Moreover, if one of the above statements holds, we have the norm estimate
    \ben\label{thm-BKa-norm}
    \begin{split}
    &\|A\|_{\calL(\mpad,\mqad)}
    \sim 
    \sup_{k,k'\in\zd}\lan k\ran^{\frac{\al d}{1-\al}(1/p-1)}
  \|A(T_{\b_k k'}\check{\va_k^{\al}})\|_{M^q_{\al}(\rd)}
  \\
  \sim &
  \sup\limits_{k,k' \in \zd}\lan k\ran^{\frac{\al d}{1-\al}(1/p-1)}
		\big\|\big\{\b_l^{d/q}\lan K_A(z',z),  \overline{\check{\eta_l^{\al}}(\b_l l'-z')\otimes\check{\va_k^{\al}}(z-\b_k k')}  \ran\big\}_{l, l'\in \zd}\big\|_{l^q(\zdd)}.
    \end{split}
    \een
    where the implicit constant is independent of $\la$.
	\end{theorem}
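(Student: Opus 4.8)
The plan is to realize the three principles from the introduction: first expand an arbitrary $f\in\mpad$ into the atoms $T_{\b_k k'}\check{\va_k^{\al}}$, then transfer the action of $A$ onto these atoms, and finally read off the $M^q_\al$-size of the atom-images from the kernel pairings. The starting point is that $\Box_k^{\al}f\in L^p_{B(\lan k\ran^{\frac{\al}{1-\al}}k,\,C\lan k\ran^{\frac{\al}{1-\al}})}(\rd)$, so Proposition \ref{pp-Fourierexp-alpha} applies band by band. Since $\b_k=\la/(2rC\lan k\ran^{\frac{\al}{1-\al}})$ is exactly the sampling rate $\la/(2rR_k)$ for $R_k=C\lan k\ran^{\frac{\al}{1-\al}}$, and the window $\scrF^{-1}(\psi_{\xi_0,2rR_k})$ produced there may be taken equal to $\check{\va_k^{\al}}$ (its support, value, and derivative bounds match \eqref{dual-frame-alpha} exactly), I obtain $\Box_k^{\al}f=\b_k^d\sum_{k'}\Box_k^{\al}f(\b_k k')\,T_{\b_k k'}\check{\va_k^{\al}}$ together with the $\la$-uniform equivalence $\|f\|_{\mpad}\sim\|\{\b_k^{d/p}\Box_k^{\al}f(\b_k k')\}_{k,k'}\|_{l^p(\zdd)}$, obtained by summing \eqref{pp-Fourierexp-alpha-cd1} over the finitely overlapping $\al$-covering. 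For $f\in\calS(\rd)$ the double series converges in $\calS(\rd)$, which legitimizes applying $A$ term by term.

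For (1)$\Rightarrow$(2) I apply $A$ to a single atom: by the frequency localization of $\check{\va_k^{\al}}$ (its spectrum meets only $O(1)$ of the $\eta_l^{\al}$), the scaling $\|\check{\va_k^{\al}}\|_{L^p}\sim\lan k\ran^{\frac{\al d}{1-\al}(1-1/p)}$ from Lemma \ref{lm-ebd-Lp}, and the translation invariance of the $M^p_\al$-norm, one gets $\|T_{\b_k k'}\check{\va_k^{\al}}\|_{\mpad}\sim\lan k\ran^{-\frac{\al d}{1-\al}(1/p-1)}$ uniformly in $k'$ and $\la$, whence $\lan k\ran^{\frac{\al d}{1-\al}(1/p-1)}\|A(T_{\b_k k'}\check{\va_k^{\al}})\|_{\mqad}\lesssim\|A\|$. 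For (2)$\Rightarrow$(1) I expand $f$ as above, apply $A$, and use the $(q\wedge1)$-subadditivity of $\|\cdot\|_{\mqad}$ against the hypothesis $\|A(T_{\b_k k'}\check{\va_k^{\al}})\|_{\mqad}\le M\lan k\ran^{-\frac{\al d}{1-\al}(1/p-1)}$; the weight $\lan k\ran^{-\frac{\al d}{1-\al}(1/p-1)}$ cancels the power of $\b_k$ carried by the coefficients, and $p\le q\wedge1$ gives $l^p\hookrightarrow l^{q\wedge1}$, yielding $\|Af\|_{\mqad}\lesssim M\|f\|_{\mpad}$. Density of $\calS(\rd)$ in $\calM^p_\al(\rd)$ then extends the bound.

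For (2)$\Leftrightarrow$(3)$\Leftrightarrow$(4) I compute $\|A(T_{\b_k k'}\check{\va_k^{\al}})\|_{\mqad}^q=\sum_l\|\Box_l^{\al}A(T_{\b_k k'}\check{\va_k^{\al}})\|_{L^q}^q$ and expand each band by the sampling equivalence \eqref{pp-Fourierexp-alpha-cd1}; since $\Box_l^{\al}A(\cdot)$ has compact spectrum it is smooth (Paley--Wiener), so its samples exist, and the converse part of Proposition \ref{pp-Fourierexp-alpha} keeps the equivalence valid even when $A(\cdot)$ is a priori only in $\calS'(\rd)$. Writing $\Box_l^{\al}F=F\ast\check{\eta_l^{\al}}$ and invoking the kernel identity $\lan AF,g\ran=\lan K_A,g\otimes\bar F\ran$ identifies the sample $\Box_l^{\al}A(T_{\b_k k'}\check{\va_k^{\al}})(\b_l l')$ with the pairing $\lan K_A(z',z),\overline{\check{\eta_l^{\al}}(\b_l l'-z')\otimes\check{\va_k^{\al}}(z-\b_k k')}\ran$ (a routine conjugation/reflection check using $T_xf(t)=f(x-t)$). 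This gives $\|A(T_{\b_k k'}\check{\va_k^{\al}})\|_{\mqad}\sim\|\{\b_l^{d/q}\lan K_A,\cdots\ran\}_{l,l'}\|_{l^q(\zdd)}$; multiplying by $\lan k\ran^{\frac{\al d}{1-\al}(1/p-1)}$, taking the supremum over $k,k'$, and pulling the global factor $\b_l^{d/q}=(\la/(2rC))^{d/q}\lan l\ran^{-\frac{\al d}{(1-\al)q}}$ out of the $l^q$-sum converts (2) into (3), while (4) is merely (3) rephrased as boundedness of a subset of $l^q(\zdd)$. Chaining these equivalences yields \eqref{thm-BKa-norm}.

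The main obstacle is the $\la$-independence of the implicit constant. The termwise estimate in (2)$\Rightarrow$(1) is lossy for small $\la$: because $\b_k$ oversamples the band of radius $rR_k$ by the factor $\la$, the crude synthesis bound carries an extra factor $(\la/(2rC))^{d(1-1/p)}$ that blows up as $\la\to0$. I would remove this by two steps. First, run the synthesis at the fixed rate $\la=1$, where that factor becomes a structural constant depending only on $r,C,d,p$, independent of the free parameter $\la$. Second, compare the discrete supremum $M_\la:=\sup_{k,k'}\lan k\ran^{\frac{\al d}{1-\al}(1/p-1)}\|A(T_{\b_k k'}\check{\va_k^{\al}})\|_{\mqad}$ with the continuous one $\sup_{k,y}\lan k\ran^{\frac{\al d}{1-\al}(1/p-1)}\|A(T_y\check{\va_k^{\al}})\|_{\mqad}$: the $M^q_\al$-valued map $y\mapsto A(T_y\check{\va_k^{\al}})$ is band-limited in $y$ to $B(\lan k\ran^{\frac{\al}{1-\al}}k,rR_k)$, and $\b_k=\la/(2rR_k)$ does not exceed the critical spacing $1/(2rR_k)$ for any $\la\in(0,1]$, so a Plancherel--P\'olya type sampling inequality shows the two suprema are comparable uniformly in $\la$. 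Combining these makes $\|A\|_{\calL(\mpad,\mqad)}\sim M_\la$ with a $\la$-free constant; the remaining bookkeeping (finite overlap of the covering and the behavior of $\|\cdot\|_{\mqad}$ as a $(q\wedge1)$-quasinorm) is routine.
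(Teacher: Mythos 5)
Most of your proposal coincides with the paper's own proof: the band-by-band expansion of $\Box_k^{\al}f$ via Proposition \ref{pp-Fourierexp-alpha} with the window $\check{\va_k^{\al}}$, the implication $(1)\Rightarrow(2)$ through the uniform $\mpad$-bound on the atoms, the synthesis argument with the $(q\wedge1)$-quasi-triangle inequality and $l^p\hookrightarrow l^{q\wedge 1}$ for $(2)\Rightarrow(1)$, and the identification of the samples $\Box_l^{\al}(A(T_{\b_k k'}\check{\va_k^{\al}}))(\b_l l')$ with the kernel pairings for $(2)\Leftrightarrow(3)\Leftrightarrow(4)$ are all exactly the paper's steps. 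The one place where you deviate --- the $\la$-uniformity of the constants, which you correctly single out as the main obstacle --- is where your argument has a genuine gap.

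Your Plancherel--P\'olya step fails at the claimed level of uniformity. The map $y\mapsto A(T_y\check{\va_k^{\al}})$ is band-limited in $y$ to $\mathrm{supp}\,\va_k^{\al}$, which \eqref{dual-frame-alpha} only constrains to lie inside $B(\lan k\ran^{\frac{\al}{1-\al}}k,\,rC\lan k\ran^{\frac{\al}{1-\al}})$; relative to this band the lattice $\b_k\zd$ has spacing exactly $\la$ times the critical (Nyquist) spacing $\frac{1}{2rC\lan k\ran^{\al/(1-\al)}}$. So at $\la=1$ you are sampling \emph{at} critical density, where sup-norm sampling inequalities are false (the function $\sin(\pi x)$, band-limited to $[-\tfrac12,\tfrac12]$, vanishes on $\zz$), and for $\la<1$ the constant necessarily blows up as $\la\to 1^-$; "does not exceed the critical spacing" is precisely not enough --- one needs strict oversampling with a quantitative margin. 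Note the contrast with how the paper uses sampling: it only ever samples functions whose spectrum lies in the \emph{small} ball $B(\lan k\ran^{\frac{\al}{1-\al}}k, C\lan k\ran^{\frac{\al}{1-\al}})$ (namely $\Box_k^{\al}f$ and $\Box_l^{\al}(A(T_y\check{\va_k^{\al}}))$), while the window is supported in the $r$-times larger ball; that built-in margin $r>1$ is what makes the constants in Proposition \ref{pp-Fourierexp-alpha} uniform, and it is absent in your application. (A secondary issue: you would need the sampling inequality for $M^q_{\al}$-valued functions with $q$ possibly $<1$, and the naive term-wise quasi-norm estimate on the oversampled expansion reintroduces exactly the loss factor $\la^{d(1-\frac{1}{q\wedge 1})}$ you are trying to remove.) The paper's fix is elementary and avoids all of this: pick $N\in\bbN$ with $2^N\la\in[1/2,1]$; since $2^N\zd\subset\zd$, the supremum of $\|A(T_{\b_k 2^N k'}\check{\va_k^{\al}})\|_{\mqad}$ over the sub-lattice is dominated by the supremum in hypothesis $(2)$, and running your synthesis argument at the rate $2^N\la$ makes the loss factor $(2^N\la)^{d(1-\frac{1}{q\wedge1})}\sim 1$. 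Your alternative of running the synthesis "at the fixed rate $\la=1$" cannot be combined directly with hypothesis $(2)$, because the $\la=1$ lattice is not a sub-lattice of $\b_k\zd$ unless $1/\la\in\bbN$ --- which is exactly why you were forced into the continuous supremum and the problematic Plancherel--P\'olya comparison.
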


\begin{proof}
	First, the relation $(1)\Longrightarrow (2)$ follows by $A\in \calL(\mpad,\mqad)$
	and the fact that
	\be
	\{\lan k\ran^{\frac{\al d}{1-\al}(1/p-1)}T_y\check{\va_k^{\al}}\}_{k\in \zd, y\in \rd}
	\ee
	is a bounded subset of $\mpad$.
    More precisely, we have the estimate
    \ben\label{thm-BKa-pf1}
    \begin{split}
        \lan k\ran^{\frac{\al d}{1-\al}(1/p-1)} \|A(T_{\b_k k'}\check{\va_k^{\al}})\|_{\mqad}
        \lesssim &
        \|A\|_{\calL(\mpad,\mqad)}\lan k\ran^{\frac{\al d}{1-\al}(1/p-1)} \|T_{\b_k k'}\check{\va_k^{\al}}\|_{\mpad}
        \\
        \lesssim &
        \|A\|_{\calL(\mpad,\mqad)}\lan k\ran^{\frac{\al d}{1-\al}(1/p-1)} \|\check{\va_k^{\al}}\|_{L^p}
        \\
        \lesssim &
        \|A\|_{\calL(\mpad,\mqad)}.
    \end{split}
    \een
	
	Next, we turn to the relation $(2)\Longrightarrow (1)$.
	For a fixed function $f\in \calS(\rd)$, we have $\text{supp}\scrF(\Box_k^{\al}f)\subset B(\lan k\ran^{\frac{\al}{1-\al}}k, C\lan k\ran^{\frac{\al}{1-\al}})$.
	Write $\b_k=\frac{\la}{2rC\lan k\ran^{\frac{\al}{1-\al}}}$.
	Using Proposition \ref{pp-Fourierexp-alpha} with 
   $\xi_0=\lan k\ran^{\frac{\al}{1-\al}}k$, $R=C\lan k\ran^{\frac{\al}{1-\al}}$, 
   $\psi(\xi)=\va_k^{\al}(2rC\lan k\ran^{\frac{\al}{1-\al}}\xi+\lan k\ran^{\frac{\al}{1-\al}}k)$
   and
   $\la\in(0,1]$, we have the expansion in $\calS(\rd)$:
	\ben\label{thm-Kernel-alpha-BD-pf-1}
	\Box_k^{\al}f(x)=\b_k^d\sum_{k'\in \zd}\Box_k^{\al}f(\b_k k')T_{\b_k k'}\check{\va_k^{\al}}(x),
	\een
	and the norm estimate
	\be
	\|\Box_k^{\al}f\|_{L^p(\rd)}
	\sim_r
	\b_k^{d/p}\|\{\Box_k^{\al}f(\b_k k')\}_{k'\in \zd}\|_{l^p(\zd)}.
	\ee
	Recall that $A\in \calL(\calS(\rd),\calS'(\rd))$.
	Applying the operator $A$ on both sides of the expansion \eqref{thm-Kernel-alpha-BD-pf-1}, we obtain
	\be
	A(\Box_k^{\al}f)=\b_k^d\sum_{k'\in \zd}\Box_k^{\al}f(\b_k k')A(T_{\b_k k'}\check{\va_k^{\al}}),
	\ee
	where the series converges in the topology of $\calS'(\rd)$.
	Taking the $M^q_{\al}(\rd)$ norm and using the (quasi-)triangle inequality, we further conclude that
	\be
	\begin{split}
		\|A(\Box_k^{\al}f)\|_{M^q_{\al}(\rd)}
		\lesssim &
        \b_k^d
		\big\|\{\Box_k^{\al}f(\b_k k')
		\|A(T_{\b_k k'}\check{\va_k^{\al}})\|_{M^q_{\al}(\rd)}\}_{k'}\big\|_{l^{q\wedge 1}}
		\\
		\lesssim &
        \sup_{k'}\|A(T_{\b_k k'}\check{\va_k^{\al}})\|_{M^q_{\al}(\rd)}
		\b_k^{d(1-\frac{1}{q\wedge 1})}
        \b_k^{\frac{d}{q\wedge 1}}
		\|\{\Box_k^{\al}f(\b_k k')
		\}_{k'}\|_{l^{q\wedge 1}}
		\\
        \sim_r &
         \sup_{k'}\|A(T_{\b_k k'}\check{\va_k^{\al}})\|_{M^q_{\al}(\rd)}
		\b_k^{d(1-\frac{1}{q\wedge 1})}
        \|\Box_k^{\al}f\|_{L^{q\wedge 1}}
        \\
		\lesssim &
         \sup_{k'}\|A(T_{\b_k k'}\check{\va_k^{\al}})\|_{M^q_{\al}(\rd)}
		\b_k^{d(1-\frac{1}{q\wedge 1})}\lan k\ran^{\frac{\al d}{1-\al}(\frac{1}{p}-\frac{1}{q\wedge 1})}
        \|\Box_k^{\al}f\|_{L^{p}}
		\\
		\sim &
		\la^{d(1-\frac{1}{q\wedge 1})}\lan k\ran^{\frac{\al d}{1-\al}(1/p-1)}
  \sup_{k'}\|A(T_{\b_k k'}\check{\va_k^{\al}})\|_{M^q_{\al}(\rd)}
        \|\Box_k^{\al}f\|_{L^{p}}.
	\end{split}
	\ee
    For small $\la$, there exits a constant $N\in \bbN$ such that $2^N\la\in [1/2,1]$.
    By repeating the above estimates associated with $2^N\la$ instead of $\la$, we obtain an improved estimate:
    \be
    \begin{split}
		\|A(\Box_k^{\al}f)\|_{M^q_{\al}(\rd)}
		\lesssim &
        (2^N\la)^{d(1-\frac{1}{q\wedge 1})}\lan k\ran^{\frac{\al d}{1-\al}(1/p-1)}
  \sup_{k'}\|A(T_{\b_k 2^Nk'}\check{\va_k^{\al}})\|_{M^q_{\al}(\rd)}
        \|\Box_k^{\al}f\|_{L^{p}}
        \\
        		\lesssim &
        \lan k\ran^{\frac{\al d}{1-\al}(1/p-1)}
  \sup_{k'}\|A(T_{\b_k k'}\check{\va_k^{\al}})\|_{M^q_{\al}(\rd)}
        \|\Box_k^{\al}f\|_{L^{p}}.
    \end{split}
    \ee
	Recalling that $f\in \calS(\rd)$ and $A\in \calL(\calS(\rd),\calS'(\rd))$, we obtain that
	\be
	f=\sum_{k\in \zd}\Box_k^{\al}f, \text{ and } 
	Af=\sum_{k\in \zd}A(\Box_k^{\al}f),
	\ee
	which converge unconditionally in $\calS(\rd)$ and $\calS'(\rd)$, respectively.
	The desired conclusion follows by
	\ben\label{thm-BKa-pf2}
	\begin{split}
		\|Af\|_{M^q_{\al}(\rd)}
		= &
		\big\|\sum_{k\in \zd}A(\Box_k^{\al}f)\big\|_{M^q_{\al}(\rd)}
		\lesssim 
		\big\|\big\{\|A(\Box_k^{\al}f)\|_{M^q_{\al}(\rd)}\big\}_k\big\|_{l^{q\wedge 1}}
  \\
		\lesssim &
		\sup_{k',k}\big(\lan k\ran^{\frac{\al d}{1-\al}(1/p-1)}
  \|A(T_{\b_k k'}\check{\va_k^{\al}})\|_{M^q_{\al}(\rd)}\big)
  \big\|\big\{\|\Box_k^{\al}f\|_{L^p(\rd)}\big\}_k\big\|_{l^{p}}
  \\
  \sim & 
  \sup_{k',k}\big(\lan k\ran^{\frac{\al d}{1-\al}(1/p-1)}
  \|A(T_{\b_k k'}\check{\va_k^{\al}})\|_{M^q_{\al}(\rd)}\big)
  \|f\|_{M^p_{\al}(\rd)}.
	\end{split}
	\een
	
	Next, we turn to the proof of $(2)\Longrightarrow (3)$.
	Recall that
	\be
	\text{supp}(\scrF(\Box_l^{\al}(A(T_y\check{\va_k^{\al}}))))\subset B(\lan l\ran^{\frac{\al}{1-\al}}l, C\lan l\ran^{\frac{\al}{1-\al}}).
	\ee
	Using Proposition \ref{pp-Fourierexp-alpha} for each $\Box_l^{\al}(A(T_y\check{\va_k^{\al}}))$, we obtain
	\ben\label{thm-BKa-pf3}
	\begin{split}
		\|A(T_y\check{\va_k^{\al}})\|_{M^q_{\al}(\rd)}
		= &
		\bigg(\sum_{l\in \zd}\|\Box_l^{\al}(A(T_y\check{\va_k^{\al}}))\|_{L^q(\rd)}^q\bigg)^{1/q}
		\\
		\sim &
		\bigg(\sum_{l,l'\in \zd}|\b_l^{d/q}\Box_l^{\al}(A(T_y\check{\va_k^{\al}}))(\b_l l')|^q\bigg)^{1/q},
	\end{split}
	\een
	where we use the fact that $\b_l=\frac{\la}{2rC\lan l\ran^{\frac{\al}{1-\al}}}$ for some $\la \in (0,1]$.
	From this, the desired conclusion follows by the fact
	\ben\label{thm-BKa-pf4}
	\begin{split}
		\Box_l^{\al}(A(T_y\check{\va_k^{\al}}))(x)
        &
        =\check{\eta_l^{\al}}\ast(A(T_y\check{\va_k^{\al}}))(x)
		=\lan A(T_y\check{\va_k^{\al}}), \overline{\check{\eta_l^{\al}}(x-\cdot)} \ran
		\\ & 
		=\lan K_A(z',z), \overline{\check{\eta_l^{\al}}(x-z')\otimes \check{\va_k^{\al}}(z-y)} \ran,
	\end{split}
    \een
    where $K_A$ is the distributional kernel of $A$. Note that the relation $(3)\Longrightarrow (2)$ also follows by Proposition \ref{pp-Fourierexp-alpha} (the inverse part) with a similar argument. It is obvious that $(3)$ and $(4)$ are equivalent.
    Finally, the norm estimate follows by \eqref{thm-BKa-pf1}, \eqref{thm-BKa-pf2} and \eqref{thm-BKa-pf3}.
    So we complete the proof.
\end{proof}

Next, we prepare some propositions for the duality argument.
\begin{proposition}[Duality of boundedness]\label{pp-dual-bd}
    Let $T, \widetilde{T} \in \calL(\calS(\rd),\calS'(\rd))$ satisfy 
    \be
    \overline{\lan Tf, g\ran}=\lan \widetilde{T}g, f\ran,\ \ \  \forall \ f,g\in \calS(\rd).
    \ee
    Suppose that $1\leq p,q\leq \fy$.  Then the following two statements are equivalent
    \bn
    \item $T\in \calL(\mpad, \mqad)$,
    \\
    \item $\widetilde{T}\in \calL(M^{q'}_{\al}(\rd), M^{p'}_{\al}(\rd))$.
    \en
    Moreover, if one of the above statements holds, 
    we have 
    \be
    \|T\|_{\calL(\mpad, \mqad)}\sim \|\widetilde{T}\|_{\calL(M^{q'}_{\al}(\rd), M^{p'}_{\al}(\rd))}.
    \ee
\end{proposition}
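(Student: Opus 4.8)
The plan is to realize $\widetilde{T}$ as the conjugate-linear adjoint of $T$ and then to invoke the duality theory of $\al$-modulation spaces. The input I would record first is the duality relation: for $1\leq p\leq\fy$ the dual of the Schwartz closure $\calM^p_\al(\rd)$ is $M^{p'}_\al(\rd)$, the pairing being the extension of the $L^2$ inner product $\lan\cdot,\cdot\ran$, and $\calS(\rd)$ is dense in $\calM^p_\al(\rd)$ by definition. Applying this with $p$ replaced by $q'$ identifies the target space as $M^q_\al(\rd)=(\calM^{q'}_\al(\rd))^*$, so that for every $h\in M^q_\al(\rd)$
\be
\|h\|_{M^q_\al(\rd)}=\sup_{g\in\calS,\,\|g\|_{M^{q'}_\al}\leq 1}|\lan h,g\ran|,
\ee
the supremum being computed over the dense subset $\calS(\rd)$ of the predual.

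With this in hand I would compute the operator norm of $T$ by testing on Schwartz functions in both slots. Since $\calS(\rd)$ is dense in the unit ball of $\calM^p_\al(\rd)$ and $T$ is continuous, while the target norm is computed against $\calS(\rd)\subset\calM^{q'}_\al(\rd)$ as above, I obtain
\be
\|T\|_{\calL(\mpad,\mqad)}=\sup_{\substack{f\in\calS,\,\|f\|_{M^p_\al}\leq 1\\ g\in\calS,\,\|g\|_{M^{q'}_\al}\leq 1}}|\lan Tf,g\ran|.
\ee
The defining relation $\overline{\lan Tf,g\ran}=\lan\widetilde{T}g,f\ran$ permits replacing $|\lan Tf,g\ran|$ by $|\lan\widetilde{T}g,f\ran|$ inside this supremum, turning it into exactly the analogous double supremum for $\widetilde{T}$; by the same duality characterization this equals $\|\widetilde{T}\|_{\calL(M^{q'}_\al(\rd),M^{p'}_\al(\rd))}$. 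This delivers simultaneously the equivalence $(1)\Leftrightarrow(2)$ and the (in fact exact) norm identity. The two implications are genuinely symmetric: exchanging $T\leftrightarrow\widetilde{T}$, $p\leftrightarrow q'$, $f\leftrightarrow g$ and conjugating leaves the hypothesis invariant, so a single computation suffices.

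The main technical point I anticipate is the endpoint regime $p=\fy$ or $q=\fy$, where $\calM^\fy_\al(\rd)$ is a proper closed subspace of $M^\fy_\al(\rd)$ and $\calS(\rd)$ fails to be norm-dense in the full space. The resolution is that the argument only ever uses density of $\calS(\rd)$ in the Schwartz closures $\calM^p_\al$ and $\calM^{q'}_\al$, together with the realization of the full spaces $M^q_\al$ and $M^{p'}_\al$ as their duals; hence every norm in sight is expressed as a pairing against Schwartz functions, which is precisely the range on which the adjoint relation is assumed. Finally I would note that $\widetilde{T}$, given a priori only as an element of $\calL(\calS(\rd),\calS'(\rd))$, automatically extends to a bounded operator on $\calM^{q'}_\al(\rd)$ once the above supremum is shown to be finite, so that statement $(2)$ holds in the sense required by the definition of $\calL(M^{q'}_\al(\rd),M^{p'}_\al(\rd))$.
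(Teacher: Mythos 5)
Your proof is correct and follows essentially the same route as the paper's: both arguments hinge on the duality $(\calM^p_{\al}(\rd))^*=M^{p'}_{\al}(\rd)$ (Proposition \ref{pp-dual-alph}), used first to bound the pairing $|\lan Tf,g\ran|$ against Schwartz functions and then, in the representation direction, to conclude that each $\widetilde{T}g$ actually lies in $M^{p'}_{\al}(\rd)$; your symmetric double-supremum formulation is just a repackaging of the paper's one-direction estimate followed by the symmetry of the hypothesis. The only caveat is that since the paper's duality is an isomorphism with equivalent rather than equal norms, your parenthetical claim of an \emph{exact} norm identity is not justified and should be weakened to $\sim$, which is all the proposition asserts.
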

\begin{proof}
    If the statement (1) holds, for any $f,g\in \calS(\rd)$ we have
    \be
    \begin{split}
        |\lan \widetilde{T}g, f\ran|
        = &
        |\lan Tf, g\ran|
        \\
        \lesssim &
        \|Tf\|_{\mqad}\|g\|_{M^{q'}_{\al}(\rd)}
        \\
        \lesssim &
        \|T\|_{\calL(\mpad, \mqad)}\|f\|_{\mpad}\|g\|_{M^{q'}_{\al}(\rd)}.
    \end{split}
    \ee
    Using the duality relation $(\calM^p_{\al}(\rd))^{\ast}=M^{p'}_{\al}(\rd)$ (see Proposition \ref{pp-dual-alph}), we find $\widetilde{T}g\in M^{p'}_{\al}(\rd)$ satisfying 
    \be
    \|\widetilde{T}g\|_{M^{p'}_{\al}(\rd)}\lesssim \|T\|_{\calL(\mpad, \mqad)}\|g\|_{M^{q'}_{\al}(\rd)}, \ \ \ \forall\ g\in \calS(\rd).
    \ee
    From this, we obtain the statement with 
    \be
    \|\widetilde{T}\|_{\calL(M^{q'}_{\al}(\rd), M^{p'}_{\al}(\rd))}\lesssim \|T\|_{\calL(\mpad, \mqad)}.
    \ee
    Using the same method, one can verify $(2)\Longrightarrow (1)$ and obtain the estimate 
    \be
    \|T\|_{\calL(\mpad, \mqad)}\lesssim \|\widetilde{T}\|_{\calL(M^{q'}_{\al}(\rd), M^{p'}_{\al}(\rd))}.
    \ee
\end{proof}

\begin{proposition}[Duality of $\al$-modulation space]\label{pp-dual-alph}
Let $1\leq p\leq \fy$, we have the duality relation
\ben\label{pp-dual-alph-c1}
(\calM^p_{\al}(\rd))^*=M^{p'}_{\al}(\rd).
\een
More precisely, there is a bounded map $\rho: M^{p'}_{\al}(\rd)\longrightarrow (\calM^{p}_{\al}(\rd))^*$, defined by
  $\rho: h\in M^{p'}_{\al}(\rd) \mapsto \rho(h)$ with
  \be
  (\rho(h))(f): =\overline{\lan h, f\ran} ,\ \ f\in \calS(\rd).
  \ee
  Conversely, there exists another bounded map $\widetilde{\rho}: (\calM^p_{\al}(\rd))^*\longrightarrow M^{p'}_{\al}(\rd)$
  where the image of $T\in (\calM^p_{\al}(\rd))^*$ under $\widetilde{\rho}$ is defined as the (unique) element $h\in M^{p'}_{\al}(\rd)$ such that
  \be
  T(f)=\overline{\lan h, f\ran},\ \ \ \ f\in \calS(\rd).
  \ee
  Moreover, we have $\rho\circ \widetilde{\rho}=I_{(\calM^{p}_{\al}(\rd))^*}$,\ \ $\widetilde{\rho}\circ \rho=I_{M^{p'}_{\al}(\rd)}$.
  So the equality \eqref{pp-dual-alph-c1} is valid in the sense of isomorphism.
\end{proposition}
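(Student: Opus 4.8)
The plan is to prove that $\rho$ and $\widetilde{\rho}$ are mutually inverse bounded maps by establishing, in order, (i) boundedness of $\rho$, (ii) injectivity of the assignment $h\mapsto\lan h,\cdot\ran|_{\calS(\rd)}$, and (iii) surjectivity, i.e. that every $T\in(\calM^p_{\al}(\rd))^*$ equals $\rho(h)$ for some $h\in M^{p'}_{\al}(\rd)$ with $\|h\|_{M^{p'}_{\al}}\lesssim\|T\|$. Throughout I will use two structural features of the $\al$-covering coming from \eqref{smooth-partition-alpha}: the finite-overlap property $\#\{l:\mathrm{supp}\,\eta_l^{\al}\cap\mathrm{supp}\,\eta_k^{\al}\neq\emptyset\}\lesssim 1$ uniformly in $k$, and the uniform $L^p$-boundedness ($1\leq p\leq\fy$) of $\Box_k^{\al}$ and of any fattened companion multiplier $\widetilde{\Box}_k^{\al}$, which follows from the uniform bound $\|\check{\eta_k^{\al}}\|_{L^1}\lesssim 1$ and Young's inequality. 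I will also use that $\Box_k^{\al}$ is self-adjoint for $\lan\cdot,\cdot\ran$ (its symbol is real) and the standard embeddings $\calS(\rd)\hookrightarrow\mpad\hookrightarrow\calS'(\rd)$, together with the density of $\calS(\rd)$ in $\calM^p_{\al}(\rd)$ built into the definition of the latter.

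For (i), writing $\lan h,f\ran=\sum_l\lan h,\Box_l^{\al}f\ran=\sum_l\lan\widetilde{\Box}_l^{\al}h,\Box_l^{\al}f\ran$ (with $\widetilde{\Box}_l^{\al}$ equal to the identity on the frequency support of $\Box_l^{\al}f$), H\"older on $L^{p'}\times L^p$ followed by the finite-overlap bound $\|\widetilde{\Box}_l^{\al}h\|_{L^{p'}}\lesssim\sum_{m\sim l}\|\Box_m^{\al}h\|_{L^{p'}}$ and the discrete H\"older inequality give $|\lan h,f\ran|\lesssim\|h\|_{M^{p'}_{\al}}\|f\|_{\mpad}$. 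This shows $\rho(h)\in(\calM^p_{\al}(\rd))^*$ with $\|\rho(h)\|\lesssim\|h\|_{M^{p'}_{\al}}$; the same finite-overlap and uniform multiplier bounds also yield the synthesis estimate $\|\sum_k\Box_k^{\al}g_k\|_{\mpad}\lesssim\big\|\{\|g_k\|_{L^p}\}_k\big\|_{l^p(\zd)}$ that I will need in (iii). Statement (ii) is immediate: if $\lan h,f\ran=0$ for all $f\in\calS(\rd)$ then $h=0$ in $\calS'(\rd)$, which gives both the uniqueness of the representative and $\widetilde{\rho}\circ\rho=I_{M^{p'}_{\al}}$.

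The core is (iii). Given $T\in(\calM^p_{\al}(\rd))^*$, the bound $|T(f)|\leq\|T\|\|f\|_{\mpad}\lesssim\|T\|\,p_N(f)$ for a Schwartz seminorm $p_N$ shows that $f\mapsto\overline{T(f)}$ is a continuous conjugate-linear functional on $\calS(\rd)$, hence a tempered distribution $h$ with $\lan h,f\ran=\overline{T(f)}$ for $f\in\calS(\rd)$. It remains to prove $h\in M^{p'}_{\al}(\rd)$ with control by $\|T\|$. For each fixed $k$, $\Box_k^{\al}h$ has compact frequency support and satisfies $\lan\Box_k^{\al}h,\phi\ran=\lan h,\Box_k^{\al}\phi\ran=\overline{T(\Box_k^{\al}\phi)}$ for $\phi\in\calS(\rd)$, so $|\lan\Box_k^{\al}h,\phi\ran|\lesssim\|T\|\,\|\phi\|_{L^p}$; for $1\leq p<\fy$ this places $\Box_k^{\al}h$ in $L^{p'}(\rd)$ by $L^p$-duality, while for $p=\fy$ it places $\Box_k^{\al}h$ in $L^1(\rd)$ via $C_0$-duality and the Riesz representation theorem. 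One then estimates $\|h\|_{M^{p'}_{\al}}$ by duality on finite index sets $E\subset\zd$: choosing near-optimal Schwartz functions $g_k$ with $\big\|\{\|g_k\|_{L^p}\}_{k\in E}\big\|_{l^p}\leq 1$ and using self-adjointness, $\sum_{k\in E}\lan\Box_k^{\al}h,g_k\ran=\big\lan h,\sum_{k\in E}\Box_k^{\al}g_k\big\ran=\overline{T\big(\sum_{k\in E}\Box_k^{\al}g_k\big)}$, which is $\lesssim\|T\|\,\big\|\sum_{k\in E}\Box_k^{\al}g_k\big\|_{\mpad}\lesssim\|T\|$ by the synthesis estimate from (i). Taking the supremum over $E$ and admissible $\{g_k\}$ gives $\|h\|_{M^{p'}_{\al}}\lesssim\|T\|$, so $\widetilde{\rho}(T):=h$ is well defined and bounded. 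Finally $\rho(h)$ and $T$ agree on $\calS(\rd)$, since $\rho(h)(f)=\overline{\lan h,f\ran}=T(f)$, and hence on all of $\calM^p_{\al}(\rd)$ by density, yielding $\rho\circ\widetilde{\rho}=I_{(\calM^p_{\al})^*}$ and the norm equivalence $\|h\|_{M^{p'}_{\al}}\sim\|\rho(h)\|_{(\calM^p_{\al})^*}$.

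I expect the main obstacle to be the endpoint $p=\fy$ (so $p'=1$). There the predual test space is $L^{\fy}(\rd)$, in which $\calS(\rd)$ is not dense, so the optimal test functions realizing $\|\Box_k^{\al}h\|_{L^1}$ cannot be approximated within $\calS(\rd)$; this is precisely why one must first prove $\Box_k^{\al}h\in L^1(\rd)$ for each $k$ through the $C_0$-duality and Riesz representation before the summed duality applies, and why working with the $\calS$-closure $\calM^{\fy}_{\al}(\rd)$ rather than $M^{\fy}_{\al}(\rd)$ is essential. The remaining cases $1\leq p<\fy$ are smoother because $\calS(\rd)$ is dense in the predual $L^p(\rd)$, so the finite-$E$ duality and the synthesis estimate combine directly without the preliminary representation step.
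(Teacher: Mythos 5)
Your proposal is correct in substance and is genuinely more self-contained than the paper's proof: the paper disposes of $1\leq p<\fy$ by citing \cite[Theorem 2.1]{HanWang2014JMSJ} and only proves the endpoint case $(\calM^{\fy}_{\al}(\rd))^*=M^1_{\al}(\rd)$, whereas you run a single argument for all $p$. The two proofs share the same skeleton: boundedness of $\rho$ via finite overlap of the $\al$-covering plus H\"older, extraction of $h\in\calS'(\rd)$ from $T$ through the embedding $\calS(\rd)\hookrightarrow M^{\fy}_{\al}(\rd)$, and a duality estimate against finite sums $\sum_{k\in E}\Box_k^{\al}g_k$ (your ``synthesis estimate'' is the paper's bound $\|\sum_{k\in\G}e^{i\th_k}\Box_k^{\al}f_k\|_{M^{\fy}_{\al}}\lesssim\sup_{k\in\G}\|f_k\|_{L^{\fy}}$, and your ``near-optimal $g_k$'' is the paper's phase trick with the constants $e^{i\th_k}$). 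Where you genuinely diverge is the endpoint mechanism for producing $\Box_k^{\al}h\in L^1$ with summable norms: the paper never invokes a representation theorem, but instead constructs the explicit test functions $f_{k,\ep,N}=\big(\mathrm{sgn}(\Box_k^{\al}h\ast\psi_{\ep})\chi_{B(0,N)}\big)\ast\overline{\widetilde{\psi_{\ep}}}$, obtains $\sum_{k\in\G}\int_{B(0,N)}|\Box_k^{\al}h\ast\psi_{\ep}(x)|dx\lesssim\|T\|_{(\calM^{\fy}_{\al})^*}$, and concludes by Fatou and exhaustion in $\ep$, $N$, $\G$; you instead establish per-block membership $\Box_k^{\al}h\in L^{p'}$ abstractly and then sum over finite index sets. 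Your route is shorter and uniform in $p$; the paper's is more elementary (no Riesz representation machinery) and yields the $L^1$ bound and its summability in one stroke.

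One step of yours needs repair as written. The claim that $|\lan\Box_k^{\al}h,\phi\ran|\lesssim\|T\|\,\|\phi\|_{L^{\fy}}$ ``places $\Box_k^{\al}h$ in $L^1(\rd)$ via $C_0$-duality and the Riesz representation theorem'' is not literally true: that bound together with Riesz representation only makes $\Box_k^{\al}h$ a finite complex Radon measure, and $\delta_0$ satisfies the same bound without being in $L^1$. To conclude $L^1$ membership you must actually use the compact frequency support you mention at the start of that sentence: by Paley--Wiener--Schwartz, $\Box_k^{\al}h$ is a smooth function of polynomial growth, so the measure furnished by Riesz is absolutely continuous with this smooth density, and its total variation equals $\|\Box_k^{\al}h\|_{L^1}\lesssim\|T\|$. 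With that link made explicit (plus the routine remark, needed in your finite-set duality for $p=\fy$, that for $u\in L^1$ the norm $\|u\|_{L^1}$ is recovered by pairing against Schwartz functions in the unit ball of $L^{\fy}$, via dominated convergence), your argument closes correctly; this missing step is precisely the difficulty that the paper's mollified-sign construction is designed to circumvent.
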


\begin{proof}
    The case $p\in [1,\fy)$ has been proved in \cite[Theorem 2.1]{HanWang2014JMSJ}. We only need to deal with the case $p=\fy$.
    In this case, $p'=1$, the duality relation should be read as $(\calM^{\fy}_{\al})^*=M^1_{\al}$. 
    For any $h\in M^{1}_{\al}$ and $f\in \calS$, we have 
    \be
     \begin{split}
         |(\rho(h))(f)|
         = &
         |\lan h, f\ran|
         =
         |\sum_{k}\lan h, \Box^{\al}_kf\ran|
         \\
         = &
         |\sum_{k}\sum_{|l|\leq N}\lan \Box^{\al}_{k+l}h, \Box^{\al}_kf\ran|
         \\
         \lesssim &
         \sum_{|l|\leq N}\sum_{k}\|\Box^{\al}_{k+l}h\|_{L^1}\|\Box^{\al}_kf\|_{L^{\fy}}\lesssim \|h\|_{M^{1}_{\al}}\|f\|_{M^{\fy}_{\al}}.
     \end{split}
    \ee
    Note that, the mapping $\rho(h)$ can be extended from Schwartz function to $\calM^{\fy}_{\al}$,  maintaining the following estimate
    \be
    |(\rho(h))(f)|\lesssim \|h\|_{M^{1}_{\al}}\|f\|_{M^{\fy}_{\al}},\ \ \ \ \forall f\in \calM^{\fy}_{\al}.
    \ee
    From this and the fact that $\rho(h)$ is a linear functional on $\calM^{\fy}_{\al}$, we find that $\rho(h)\in (\calM^{\fy}_{\al})^*$ with $\|\r(h)\|_{(\calM^{\fy}_{\al})^*}\lesssim \|h\|_{M^{1}_{\al}(\rd)}$.

    One the other hand, for $T\in (\calM^{\fy}_{\al})^*$, we have
  \be
  |T(f)|\leq \|T\|_{(\calM^{\fy}_{\al})^*}\|f\|_{M^{\fy}_{\al}},\ \ \forall f\in \calS.
  \ee
  Recall that $\calS$ is continuously embedded into $M^{\fy}_{\al}$.
  Thus, the map $T$ restricted on $\calS$ induces a bounded linear functional on $\calS$.
  There exits a tempered distribution, denoted by $h=\widetilde{\r}(T)$, such that
  \be
  T(f)=
  \overline{\langle h, f\rangle}
  ,\ \ \forall f\in \calS(\rd).
  \ee
  Next, we verify that $h\in M^{1}_{\al}$ with $\|h\|_{M^{1}_{\al}}\lesssim \|T\|_{(\calM^{\fy}_{\al})^*}$.
  Let $\G$ be a finite subset of $\zd$, 
   $\{f_k\}_{k\in \G}$ be a sequence of Schwartz function.
  Note that
  \be
   f:=\sum_{k\in \G}\Box_k^{\al}f_k\in \calS,
  \ee
  where the decomposition function $\rho_0$ here is real-valued. We have the estimate
  \be
   |\langle h, f\rangle|=|T(f)|\lesssim \|T\|_{(\calM^{\fy}_{\al})^*}\|f\|_{M^{\fy}_{\al}}
   \lesssim 
   \|T\|_{(\calM^{\fy}_{\al})^*}\sup_{k\in \G}\|f_k\|_{L^{\fy}}.
  \ee
  Write
  \be
  \langle h, f\rangle=\sum_{k\in \G}\lan h, \Box_k^{\al}f_k\ran=\sum_{k\in \G}\lan\Box_k^{\al} h, f_k\ran.
  \ee
  By multiplying suitable constants $e^{i\th_k}$ to $f_k$, we conclude that
  \ben\label{pp-dual-alph-1}
  \begin{split}
   &\sum_{k\in \G}|\lan\Box_k^{\al}h, f_k\ran|
  = 
  |\sum_{k\in \G}\lan\Box_k^{\al}h, e^{i\th_k}f_k\ran|
  \\
  = &
  |\lan h, \sum_{k\in \G}e^{i\th_k}\Box_k^{\al}f_k\ran|
  \lesssim
  \|T\|_{(\calM^{\fy}_{\al})^*}\|\sum_{k\in \G}e^{i\th_k}\Box_k^{\al}f_k\|_{M^{\fy}_{\al}}
   \lesssim 
   \|T\|_{(\calM^{\fy}_{\al})^*}\sup_{k\in \G}\|f_k\|_{L^{\fy}}.
  \end{split}
  \een
  Let $\psi_{\ep}(x)=\ep^{-d}\psi(x/\ep)$ for $\ep\in (0,1)$, where
  $\psi$ is a $C_c^{\fy}$ function satisfying $\int_{\rd}\psi(x)dx=1$.
  Let
  \be
  f_{k,\ep,N}=\big(\text{sgn}(\Box_k^{\al}h\ast \psi_{\ep})\chi_{B(0,N)}\big)\ast \overline{\widetilde{\psi_{\ep}}},
  \ee
  where $\widetilde{\psi_{\ep}}(x)=\psi_{\ep}(-x)$, and
  \be
  \text{sgn}(\Box_k^{\al}h\ast \psi_{\ep})(x)=
  \begin{cases}
      \frac{\Box_k^{\al}h\ast \psi_{\ep}(x)}{|\Box_k^{\al}h\ast \psi_{\ep}(x)|},\ \ &\text{if}\ \Box_k^{\al}h\ast \psi_{\ep}(x)\neq 0,
      \\
      0,  &\text{if}\ \Box_k^{\al}h\ast \psi_{\ep}(x)= 0.
  \end{cases}
  \ee
  Note that $f_{k,\ep,N}\in \calS$ and
  \be
   \lan\Box_k^{\al} h, f_{k,\ep,N}\ran=\lan \Box_k^{\al}h\ast \psi_{\ep}, \text{sgn}(\Box_k^{\al}h\ast \psi_{\ep})\chi_{B(0,N)} \ran 
   =
   \int_{B(0,N)}|\Box_k^{\al}h\ast \psi_{\ep}(x)|dx.
  \ee
   Putting this estimate into \eqref{pp-dual-alph-1}, we obtain that
   \be
   \begin{split}
       \sum_{k\in \G}\int_{B(0,N)}|\Box_k^{\al}h\ast \psi_{\ep}(x)|dx
       = &
       \sum_{k\in \G}|\lan\Box_k^{\al}h, f_{k,\ep,N}\ran|
       \\
       \lesssim &
   \|T\|_{(\calM^{\fy}_{\al})^*}\sup_{k\in \G}\|f_{k,\ep,N}\|_{L^{\fy}}\lesssim \|T\|_{(\calM^{\fy}_{\al})^*},
   \end{split}
   \ee
   where we use the fact that $\|f_{k,\ep,N}\|_{L^{\fy}}\lesssim 1$ uniformly for all $k$, $\ep$, and $N$.
   Letting $\ep\rightarrow 0^+$ and using the Fatou lemma, we find that
   \be
   \begin{split}
   \sum_{k\in \G}\int_{B(0,N)}|\Box_k^{\al}h(x)|dx
   = &
       \sum_{k\in \G}\int_{B(0,N)}\varliminf\limits_{\ep\rightarrow 0^+}|\Box_k^{\al}h\ast \psi_{\ep}(x)|dx
       \\
   \leq &
   \sum_{k\in \G}\varliminf\limits_{\ep\rightarrow 0^+}\int_{B(0,N)}|\Box_k^{\al}h\ast \psi_{\ep}(x)|dx
   \lesssim \|T\|_{(\calM^{\fy}_{\al})^*}.
   \end{split}
   \ee
   By the arbitrary of $\G$ and $N$, we further obtain
   \be
    \sum_{k\in \zd}\int_{\rd}|\Box_k^{\al}h(x)|dx\lesssim \|T\|_{(\calM^{\fy}_{\al})^*},
   \ee
   which imiplies the desired conclusion
   \be
   \|\widetilde{\r}(T)\|_{M^1_{\al}}
   =
   \|h\|_{M^1_{\al}}\lesssim \|T\|_{(\calM^{\fy}_{\al})^*}.
   \ee
   The final step is to verify $\rho\circ \widetilde{\rho}=I_{(\calM^{\fy}_{\al}(\rd))^*}$ and $\widetilde{\rho}\circ \rho=I_{M^1_{\al}(\rd)}$.
   For any $T\in (\calM^{\fy}_{\al})^*$, we have
   \be
   \begin{split}
       (\rho\circ \widetilde{\rho})(T)(\va)
       =
       \overline{\lan \widetilde{\rho}T, \va\ran}=T(\va),\ \ \ \forall\ \va\in \calS.
   \end{split}
   \ee
   From this and the density of $\calS$ in $\calM^{\fy}_{\al}$, we obtain $\rho\circ \widetilde{\rho}=I_{(\calM^{\fy}_{\al}(\rd))^*}$.

   For any $h\in M^1_{\al}$, we have
   \be
   \begin{split}
       \lan(\widetilde{\rho}\circ \rho)(h), \va\ran
       =
       \overline{\rho(h)(\va)}=\lan h,\va\ran,\ \ \ \forall\ \va\in \calS.
   \end{split}
   \ee
   From this, we obtain $\widetilde{\rho}\circ \rho=I_{M^1_{\al}(\rd)}$
   Now, we have completed the proof of this proposition.
\end{proof}

By a duality argument, we have the following conclusion for $A\in\calL(M^p_{\al}(\rd), M^{\i}_{\al}(\rd))$.
\begin{theorem}\label{thm-BKa-dual}
	Suppose that $p\in [1, \i]$.
	Let $A\in \calL(\calS(\rd), \calS'(\rd))$ be a linear operator with
	distributional kernel $K_A\in \calS'(\rdd)$.	
	For any $\la\in (0, 1]$, denote $\b_k=\frac{\la}{2rC\lan k\ran^{\frac{\al}{1-\al}}}$ for all $k\in \zd$.
	The following statements are equivalent.
	\bn
	\item
	$A\in\calL(M^p_{\al}(\rd), M^{\i}_{\al}(\rd))$;
	\item
	$\sup\limits_{k,k' \in \zd}
	\big\|\big\{\lan l\ran^{\frac{-\al d}{(1-\al)p'}}\lan K_A(z',z),  
	\check{\va_k^{\al}}(z'-\b_k k')\otimes\check{\eta_l^{\al}}(\b_l l'-z)  \ran\big\}_{l, l'\in \zd}\big\|_{l^{p'}(\zdd)}<\fy$.
	\en
 Moreover, if one of the above statements holds, we have the norm estimate
 \ben\label{thm-BKa-dual-norm}
    \begin{split}
    &\|A\|_{\calL(M^p_{\al}(\rd), M^{\i}_{\al}(\rd))}
    \\
    \sim &
    \sup\limits_{k,k' \in \zd}
	\big\|\b_l^{d/p'}\lan K_A(z',z),  
	\check{\va_k^{\al}}(z'-\b_k k')\otimes\check{\eta_l^{\al}}(\b_l l'-z)  \ran\big\}_{l, l'\in \zd}\big\|_{l^{p'}(\zdd)}.
    \end{split}
    \een
    where the implicit constant is independent of $\la$.
\end{theorem}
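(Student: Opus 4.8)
The plan is to deduce this theorem from the already established boundedness kernel theorem (Theorem \ref{thm-BKa}) by a duality argument. The obstruction to applying Theorem \ref{thm-BKa} directly is that its hypothesis forces $p\le q\wedge 1$, which for the target $M^{\fy}_{\al}(\rd)$ (i.e. $q=\fy$, $q\wedge 1=1$) would only permit $p\le 1$; for the range $p\in[1,\fy]$ at hand this fails except at $p=1$. The remedy is to pass to the transpose: I would first introduce $\widetilde A\in\calL(\calS(\rd),\calS'(\rd))$ determined by $\overline{\lan Af,g\ran}=\lan\widetilde A g,f\ran$ for all $f,g\in\calS(\rd)$, and record that its distributional kernel is $K_{\widetilde A}(z',z)=\overline{K_A(z,z')}$. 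This is immediate from the kernel identity $\lan Af,g\ran=\lan K_A,g\otimes\bar f\ran$ of \eqref{intro-1} together with the sesquilinearity of the bracket $\lan\cdot,\cdot\ran$ and a relabelling of the two $\rd$-variables. The gain is that the pair of exponents governing $\widetilde A$ will satisfy the hypothesis of Theorem \ref{thm-BKa}.

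The first key step is Proposition \ref{pp-dual-bd} (duality of boundedness), applied with source $M^p_{\al}(\rd)$ and target $M^{\fy}_{\al}(\rd)$, which gives
\be
A\in\calL(M^p_{\al}(\rd),M^{\fy}_{\al}(\rd))\Longleftrightarrow \widetilde A\in\calL(M^1_{\al}(\rd),M^{p'}_{\al}(\rd)),
\ee
together with the norm comparison $\|A\|_{\calL(M^p_{\al},M^{\fy}_{\al})}\sim\|\widetilde A\|_{\calL(M^1_{\al},M^{p'}_{\al})}$; here I use the duality $(\calM^p_{\al}(\rd))^*=M^{p'}_{\al}(\rd)$ of Proposition \ref{pp-dual-alph}, which now also covers $p=\fy$, so that the edge cases $p=1$ and $p=\fy$ are both admissible. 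Next I would apply Theorem \ref{thm-BKa} to $\widetilde A$ with the exponent pair $(\tilde p,\tilde q)=(1,p')$. Since $p\in[1,\fy]$ forces $p'\in[1,\fy]$ and hence $p'\wedge 1=1$, the required inequality $\tilde p\le\tilde q\wedge 1$ collapses to $1\le 1$ and holds. The equivalence $(1)\Leftrightarrow(3)$ of that theorem, read for $\widetilde A$, then characterizes $\widetilde A\in\calL(M^1_{\al},M^{p'}_{\al})$ through the finiteness of the supremum over $k,k'$ of the $l^{p'}$-norms of the kernel pairings against $\overline{\check{\eta_l^{\al}}(\b_l l'-z')\otimes\check{\va_k^{\al}}(z-\b_k k')}$, with the prefactor $\lan k\ran^{\frac{\al d}{1-\al}(1/\tilde p-1)}=\lan k\ran^{0}=1$ vanishing precisely because $\tilde p=1$.

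The remaining step is to convert this characterization for $\widetilde A$ back into one for $K_A$. Substituting $K_{\widetilde A}(z',z)=\overline{K_A(z,z')}$ into
\be
\lan K_{\widetilde A}(z',z),\overline{\check{\eta_l^{\al}}(\b_l l'-z')\otimes\check{\va_k^{\al}}(z-\b_k k')}\ran
\ee
and pulling the conjugation outside (harmless, as only the modulus enters the $l^{p'}$-norm), a relabelling of the two vector variables turns this quantity into a pairing of $K_A(z',z)$ against $\check{\va_k^{\al}}(z'-\b_k k')\otimes\check{\eta_l^{\al}}(\b_l l'-z)$, which is exactly the expression in statement $(2)$; the overline present in Theorem \ref{thm-BKa} is cancelled by the conjugation arising from the duality, explaining why no overline survives in $(2)$. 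Combining this identification with the two norm equivalences (from Proposition \ref{pp-dual-bd} and from Theorem \ref{thm-BKa} applied to $\widetilde A$, where the factors $\b_l^{d/p'}$ and $\lan l\ran^{-\frac{\al d}{(1-\al)p'}}$ coincide up to the $\la$-independent constant $(\la/2rC)^{d/p'}$) yields both the stated equivalence and the estimate \eqref{thm-BKa-dual-norm}. \emph{The main obstacle} I anticipate is purely bookkeeping: matching the conjugations and the swap of the two $\rd$-variables so that the $\eta$- and $\va$-factors land on the correct arguments of $K_A$, and checking that the normalizing weights transform consistently under duality. No analytic input beyond Theorem \ref{thm-BKa} and the two duality propositions should be needed.
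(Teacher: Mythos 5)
Your proposal follows essentially the same route as the paper's proof: define the transpose $\wt{A}$ via $\lan \wt{A}g,f\ran=\overline{\lan Af,g\ran}$, combine Proposition \ref{pp-dual-bd} (with Proposition \ref{pp-dual-alph} covering the endpoint cases) with Theorem \ref{thm-BKa} at the exponent pair $(1,p')$, and unwind the kernel pairing --- the paper performs this last step directly from the definition of $\wt{A}$ by computing $\Box_l^{\al}(\wt{A}(T_y\check{\va_k^{\al}}))(x)$, rather than through your explicit identity $K_{\wt{A}}(z',z)=\overline{K_A(z,z')}$, but the computation is identical. One terminological slip: the constant $(\la/2rC)^{d/p'}$ relating $\b_l^{d/p'}$ to $\lan l\ran^{-\al d/((1-\al)p')}$ is of course $\la$-dependent, which is harmless for the fixed-$\la$ equivalence of the two statements but means the $\la$-uniform norm estimate \eqref{thm-BKa-dual-norm} should be read off from the $\b_l^{d/q}$-weighted form \eqref{thm-BKa-norm} of Theorem \ref{thm-BKa}, exactly as the paper does.
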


\begin{proof}
    Recall $A\in \calL(\calS, \calS')$.
    Define $\widetilde{A}\in \calL(\calS, \calS')$ by $\langle \wt{A}g,f\ran:= \overline{\lan Af, g\ran}$
    for all $f,g \in \calS$.
    By Proposition \ref{pp-dual-bd} and Theorem \ref{thm-BKa}, we obtain that $(1)$ is equivalent to $\wt{A}\in \calL(M^1_{\al}, M^{p'}_{\al})$,
    satisfying the following estimate
    \ben\label{thm-BKa-dual-pf1}
    \|A\|_{\calL(M^p_{\al},M^{\fy}_{\al})}
    \sim 
    \|\widetilde{A}\|_{\calL(M^1_{\al}, M^{p'}_{\al})}
    \sim
    \sup_{k,k'}\| \wt{A}(T_{\b_kk'}\check{\vph_k^{\a}})\|_{M^{p'}_{\a}}<\fy.
    \een
    Notice that 
    $\text{supp}\calF(\Box_l^{\a}\wt{A}(T_{\b_kk'}\check{\vph_k^{\a}})) 
    \subset 
    B(\lan l \ran^{\frac{\a}{1-\a}}l, C\lan l \ran^{\frac{\a}{1-\a}})$.
    From Proposition \ref{pp-Fourierexp-alpha}, as in \eqref{thm-BKa-pf3}, we have 
    \ben\label{thm-BKa-dual-pf2}
        \|\wt{A}(T_y\check{\va_k^{\al}})\|_{M^{p'}_{\al}(\rd)}
        \sim 
		\big\|\{ \b_l^{d/{p'}}\Box_l^{\al}(\wt{A}(T_y\check{\va_k^{\al}}))(\b_l l')  \}_{l,l'\in \zd}\big\|_{l^{p'}(\bbZ^{2d})}.
    \een
	And by the definition of $A, \wt{A}$, we have 
	\be
	\begin{split}
		\Box_l^{\al}(\wt{A}(T_y\check{\va_k^{\al}}))(x)
		= &
		\lan \wt{A}(T_y\check{\va_k^{\al}}), \overline{\check{\eta_l^{\al}}(x-\cdot)} \ran
        =
        \overline{ \lan  A\overline{\check{\eta_l^{\al}}(x-\cdot)}, T_y\check{\va_k^{\al}}\ran }
		\\
		= &
		\overline{ \lan K_A(z',z), T_y\check{\va_k^{\al}}(z') \otimes \check{\eta_l^{\al}}(x-z) \ran},
        \\
		= &
		\overline{ \lan K_A(z',z), \check{\va_k^{\al}}(z'-y)\otimes\check{\eta_l^{\al}}(x-z)  \ran}.
	\end{split}
    \ee
    Substituting the above calculation into \eqref{thm-BKa-dual-pf1} and \eqref{thm-BKa-dual-pf2}, we obtain that the statement $(1)$ is equivalent to
    \begin{equation*}
        \sup_{k,k'}\big\| 
         \{ 
           \b_l^{d/{p'}} \lan K_A(z',z), \check{\va_k^{\al}}(z'-\b_kk')\otimes\check{\eta_l^{\al}}(\b_ll'-z)  \ran 
         \}_{l,l'\in \zd}
        \big\|_{l^{p'}(\bbZ^{2d})}
        <\i.
    \end{equation*}
    The norm estimate is also obtained.
\end{proof}

\section{Compactness kernel theorem on $\alpha$-modulation spaces}
First, We establish an expansion for the elements in $\a$-modulation spaces first.
The function sequence $\{\va_k^{\a}\}_{k\in \zd}$ is defined in \eqref{dual-frame-alpha}.
\begin{proposition}\label{pp-exp-alpha}
	Let $p\in (0,\fy]$, $\la\in (0,1]$. Denote $\b_k=\frac{\la}{2rC\lan k\ran^{\frac{\al}{1-\al}}}$.
 For $f\in \mpad$, we have the expansion
	\ben\label{lm-exp-alpha-cd0}
	f=\sum_{k,k'\in \zd}\b_k^d\Box_k^{\al}f(\b_k k')T_{\b_k k'}\check{\va_k^{\al}}
	\een
	with unconditional convergence in $\mpad$ for $p<\fy$, and weak-star convergence in $M^{\fy}_{\a}(\rd)$. Moreover, for every fixed $p\in (0,\fy]$, we have the equivalent relation
	\ben\label{lm-exp-alpha-cd1}
	\|f\|_{\mpad}\sim \big\|\{\b_k^{d/p}\Box_k^{\al}f(\b_k k')\}_{k',k\in \zd}\big\|_{l^p(\zdd)},
	\een
    where the implicit constant is independent of $f$ and $\la$.
	Conversely, if $g\in \calS'(\rd)$ satisfies $\big\|\{\b_k^{d/p}\Box_k^{\al}g(\b_k k')\}_{k',k\in \zd}\big\|_{l^p(\zdd)}<\fy$, 
	we have $g \in \mpad$.
\end{proposition}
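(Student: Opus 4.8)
The plan is to reduce everything to the single-block expansion of Proposition \ref{pp-Fourierexp-alpha}, applied to each frequency piece $\Box_k^{\al}f$, and then to reassemble the pieces using the finite overlap of the $\al$-covering. First I fix $k$ and observe that $\Box_k^{\al}f\in L^p_{B(\xi_0,R)}(\rd)$ with $\xi_0=\lan k\ran^{\frac{\al}{1-\al}}k$ and $R=C\lan k\ran^{\frac{\al}{1-\al}}$, so that $2rR=2rC\lan k\ran^{\frac{\al}{1-\al}}$ and $\frac{\la}{2rR}=\b_k$. Choosing $\psi(\xi)=\va_k^{\al}(2rC\lan k\ran^{\frac{\al}{1-\al}}\xi+\lan k\ran^{\frac{\al}{1-\al}}k)$ exactly as in the proof of Theorem \ref{thm-BKa} identifies $\scrF^{-1}(\psi_{\xi_0,2rR})$ with $\check{\va_k^{\al}}$, so Proposition \ref{pp-Fourierexp-alpha} yields the per-block expansion $\Box_k^{\al}f=\b_k^d\sum_{k'}\Box_k^{\al}f(\b_k k')T_{\b_k k'}\check{\va_k^{\al}}$ (converging in $L^p$ for $p<\fy$ and weak-star for $p=\fy$) together with the per-block equivalence $\|\Box_k^{\al}f\|_{L^p}\sim\b_k^{d/p}\|\{\Box_k^{\al}f(\b_k k')\}_{k'}\|_{l^p(\zd)}$. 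The crucial point is that the implicit constant in \eqref{pp-Fourierexp-alpha-cd1} depends only on $r$, hence is independent of both $k$ and $\la$.

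The norm equivalence \eqref{lm-exp-alpha-cd1} is then immediate: raising the per-block equivalence to the $p$-th power and summing over $k$ (with the obvious sup-modification when $p=\fy$) gives $\|f\|_{\mpad}^p=\sum_k\|\Box_k^{\al}f\|_{L^p}^p\sim\sum_{k,k'}\b_k^d|\Box_k^{\al}f(\b_k k')|^p=\|\{\b_k^{d/p}\Box_k^{\al}f(\b_k k')\}_{k,k'}\|_{l^p(\zdd)}^p$, with a $\la$-independent constant. The converse assertion is handled the same way: if $g\in\calS'(\rd)$ satisfies $\{\b_k^{d/p}\Box_k^{\al}g(\b_k k')\}_{k,k'}\in l^p$, then each $\Box_k^{\al}g\in\calS'_{B(\xi_0,R)}(\rd)$ has $\{\Box_k^{\al}g(\b_k k')\}_{k'}\in l^p$, so the inverse part of Proposition \ref{pp-Fourierexp-alpha} forces $\Box_k^{\al}g\in L^p_{B(\xi_0,R)}$ with $\|\Box_k^{\al}g\|_{L^p}\sim\b_k^{d/p}\|\{\Box_k^{\al}g(\b_k k')\}_{k'}\|_{l^p}$, and summing over $k$ shows $g\in\mpad$.

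The real work is the convergence of the double series. I would introduce the finite-overlap relation $k\sim l$ meaning $\text{supp}\,\eta_l^{\al}\cap\text{supp}\,\va_k^{\al}\neq\emptyset$; by the structure of the $\al$-covering there is a constant $N=N(d,\al,r)$ bounding the number of $l$ with $k\sim l$ for each fixed $k$, and vice versa. Since $\scrF(T_{\b_k k'}\check{\va_k^{\al}})$ is supported in $\text{supp}\,\va_k^{\al}$, we have $\Box_l^{\al}(T_{\b_k k'}\check{\va_k^{\al}})=0$ unless $k\sim l$, so for any finite $G\subset\zdd$,
\be
\Box_l^{\al}\Big(\sum_{(k,k')\in G}\b_k^d\Box_k^{\al}f(\b_k k')T_{\b_k k'}\check{\va_k^{\al}}\Big)=\sum_{k\sim l}\Box_l^{\al}\Big(\sum_{k':(k,k')\in G}\b_k^d\Box_k^{\al}f(\b_k k')T_{\b_k k'}\check{\va_k^{\al}}\Big).
\ee
Using the uniform $L^p$-boundedness of $\Box_l^{\al}$ on functions band-limited to a single $\al$-ball (which follows from the derivative bounds on $\eta_l^{\al}$ and a band-limited estimate of the type in Lemma \ref{lm-ebd-Lp}), the quasi-triangle inequality over the at most $N$ indices $k\sim l$, and estimate \eqref{pp-Fourierexp-alpha-cd2} applied to each inner sum, I bound $\|\Box_l^{\al}(\cdots)\|_{L^p}^p$ by $\sum_{k\sim l}\b_k^d\|\{\Box_k^{\al}f(\b_k k')\}_{k':(k,k')\in G}\|_{l^p}^p$, the constant possibly depending on $\la$ through $\|\psi(\cdot/\la)\|_{\calC^N}$ but not on $G$. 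Summing over $l$ and invoking finite overlap once more gives
\be
\Big\|\sum_{(k,k')\in G}\b_k^d\Box_k^{\al}f(\b_k k')T_{\b_k k'}\check{\va_k^{\al}}\Big\|_{\mpad}\lesssim\big\|\{\b_k^{d/p}\Box_k^{\al}f(\b_k k')\}_{(k,k')\in G}\big\|_{l^p(\zdd)}.
\ee
Since the full coefficient array lies in $l^p(\zdd)$ by \eqref{lm-exp-alpha-cd1}, its tail over sets $G$ disjoint from a large finite set is arbitrarily small, which is exactly unconditional convergence in $\mpad$ for $p<\fy$; passing to the limit and using the continuity of each $\Box_l^{\al}$ shows the limit $g$ satisfies $\Box_l^{\al}g=\Box_l^{\al}f$ for every $l$, hence $g=f$.

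For $p=\fy$ the same tail estimate shows the partial sums are uniformly bounded in $M^{\fy}_{\al}(\rd)$, so it only remains to identify their weak-star limit. Here I would invoke the duality $(\calM^1_{\al}(\rd))^*=M^{\fy}_{\al}(\rd)$ from Proposition \ref{pp-dual-alph}: since $\calS(\rd)$ is dense in $\calM^1_{\al}(\rd)$ and the partial sums are uniformly bounded, it suffices to test against $\va\in\calS(\rd)$, where, after writing $\lan\Box_k^{\al}f,\va\ran=\lan f,\Box_k^{\al}\va\ran$ and $\sum_k\Box_k^{\al}\va=\va$, convergence reduces to the block-wise weak-star convergence already supplied by Proposition \ref{pp-Fourierexp-alpha}. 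I expect this convergence step---combining the almost orthogonality of the $\al$-covering, the uniform $L^p$-boundedness of $\Box_l^{\al}$ on band-limited functions (most delicate when $p<1$), and the weak-star argument for $p=\fy$---to be the main obstacle, whereas the norm equivalence and its converse are direct consequences of the single-block results.
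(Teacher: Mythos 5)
Your proposal is correct and follows essentially the same route as the paper: both apply Proposition \ref{pp-Fourierexp-alpha} block by block (with the identical choice of $\xi_0$, $R$ and rescaled $\psi=\va_k^{\al}(2rC\lan k\ran^{\frac{\al}{1-\al}}\cdot+\lan k\ran^{\frac{\al}{1-\al}}k)$), obtain \eqref{lm-exp-alpha-cd1} and the converse statement by summing the per-block equivalences over $k$, and control the double series via \eqref{pp-Fourierexp-alpha-cd2} combined with the finite overlap of the $\al$-covering. The differences are purely organizational: the paper estimates $f$ minus a partial sum directly by splitting into three error terms over nested boxes $|k|\leq N$, $|k'|\leq M$, whereas you prove a uniform bound over arbitrary finite index sets $G$ and conclude by completeness plus identification of the limit through $\Box_l^{\al}$, and you spell out (the almost-orthogonality of $\Box_l^{\al}$ on band-limited pieces, the duality with $\calM^1_{\al}(\rd)$ for $p=\fy$) steps the paper leaves implicit or dismisses as ``suitable modification.''
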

\begin{proof}
	When $p<\fy$, for $f\in \mpad$, we have the expansion
	\be
	f=\sum_{k\in \zd}\Box_k^{\al}f
	\ee
	with unconditional convergence in $\mpad$.
	In fact,
	for any fixed constant $N\in \bbN$ and all finite subset $E\subset \zd$ containing $\{k\in \zd: |k|\leq N\}$ we have
	\be
	\big\|f-\sum_{k\in E}\Box_k^{\al}f\big\|_{\mpad}
	=
	\big\|\sum_{k\in E^c}\Box_k^{\al}f\big\|_{\mpad}
	\lesssim 
	\big(\sum_{|k|>N}\|\Box_k^{\al}f\|_{L^p(\rd)}^p\big)^{1/p}
	<\ep_N,
	\ee
	where $\ep_N\rightarrow 0$ as $N\rightarrow \fy$.
	Next, for all $|k|\leq N$, using Proposition \ref{pp-Fourierexp-alpha}, we have the expansion
	\be
	\Box_k^{\al}f=\b_k^d\sum_{k'\in \zd}\Box_k^{\al}f(\b_k k')T_{\b_k k'}\check{\va_k^{\al}}
	\ee
	with unconditional convergence in $L^p(\rd)$, and that
	\ben\label{pf-box-alpha-norm}
	\|\Box_k^{\al}f\|_{L^p(\rd)}
	\sim 
	\b_k^{d/p}\|\{\Box_k^{\al}f(\b_k k')\}_{k'\in \zd}\|_{l^p(\zd)}.
	\een
	Thus, for any constant $M\in \bbN$ and
	all finite subset $F\subset \zd$ containing $\{k'\in \zd: |k'|\leq M\}$ we have
	\be
	\big\|\Box_k^{\al}f-\b_k^d\sum_{k'\in F}\Box_k^{\al}f(\b_k k')T_{\b_k k'}\check{\va_k^{\al}}\big\|_{L^p(\rd)}<\ep_{N,M}.
	\ee
	for all $|k|\leq N$, where $\ep_{N,M}\rightarrow 0$ as $M\rightarrow \fy$.
	For any finite subset $\Om\subset \zdd$ containing $\{(k,k')\in \zd: |k|\leq N, |k'|\leq M\}$, we write
	\be
	\begin{split}
		&\big\|f-\sum_{(k,k')\in \Om}\b_k^d\Box_k^{\al}f(\b_k k')T_{\b_k k'}\check{\va_k^{\al}}\big\|_{\mpad}
		\\
		\lesssim &
		\big\|f-\sum_{|k|\leq N}\Box_k^{\al}f\big\|_{\mpad}
		+
		\big\|\sum_{(k,k')\in \Om, |k|> N}\b_k^d\Box_k^{\al}f(\b_k k')T_{\b_k k'}\check{\va_k^{\al}}\big\|_{\mpad}
        \\
        &+
        \big\|\sum_{|k|\leq N}\Box_k^{\al}f-\sum_{(k,k')\in \Om, |k|\leq N}\b_k^d\Box_k^{\al}f(\b_k k')T_{\b_k k'}\check{\va_k^{\al}}\big\|_{\mpad}
        =:I+II+III.
	\end{split}
	\ee
	Note that $I\lesssim \ep_N$. Using the derivative assumption of $\va_k^{\al}$ and 
 the estimate \eqref{pp-Fourierexp-alpha-cd2}, for any $|k|>N$ and $\Om_k=\{k'\in \zd, (k,k')\in \Om\}$, we have
	\be
	\begin{split}
		&\big\|\sum_{k'\in \Om_k}\b_k^d\Box_k^{\al}f(\b_k k')T_{\b_k k'}\check{\va_k^{\al}}\big\|_{\lpd}
		\lesssim
		\b_k^{d/p}\big\|\{\Box_k^{\al}f(\b_k k')\}_{k' \in \Om_k}\big\|_{l^p(\zd)}
        \lesssim 
        \|\bka f\|_{\lpd},
	\end{split}
	\ee
    where the implicit constant is independent of $k$.
	From this, we conclude that
	\be
	\begin{split}
		II
		\lesssim &
		\big\|\big\{\big\|\sum_{k'\in \Om_k}\b_k^d\Box_k^{\al}f(\b_k k')T_{\b_k k'}\check{\va_k^{\al}}\big\|_{\lpd}\big\}_{|k|> N}\big\|_{l^p(\zd)}
		\\
		\lesssim &
		\big(\sum_{|k|>N}\|\Box_k^{\al}f\|_{L^p(\rd)}^p\big)^{1/p}
		<\ep_N.
	\end{split}
	\ee
	For the estimate of $III$, we have
	\be
	\begin{split}
		III
		= &
		\big\|\sum_{|k|\leq N}\big(\Box_k^{\al}f-\sum_{k'\in \Om_k}\b_k^d\Box_k^{\al}f(\b_k k')T_{\b_k k'}\check{\va_k^{\al}}\big)\big\|_{\mpad}
		\\
		\lesssim &
		\big(\sum_{|k|\leq N}\big\|\Box_k^{\al}f-\sum_{k'\in \Om_k}\b_k^d\Box_k^{\al}f(\b_k k')T_{\b_k k'}\check{\va_k^{\al}}\big\|_{\lpd}^p\big)^{1/p}
		\lesssim 
		N^{d/p}\ep_{N,M}.
	\end{split}
	\ee
	Combining the above estimates of the terms $I$, $II$ and $III$, we obtain
	\be
	\|f-\sum_{(k,k')\in \Om}\b_k^d\Box_k^{\al}f(\b_k k')T_{\b_k k'}\check{\va_k^{\al}}\|_{\mpad}
	\lesssim \ep_N+N^{d/p}\ep_{N,M}
	\ee
	 for any subset $\Om\subset \zdd$ containing $\{(k,k')\in \zd: |k|\leq N, |k'|\leq M\}$.
	 Then, the desired unconditional convergence follows by first taking sufficiently large $N$ and then letting $M$ be sufficiently large.
	  And \eqref{lm-exp-alpha-cd1} follows by \eqref{pf-box-alpha-norm}:
	 \[
	 \|f\|_{\mpad}
	 =    \|\{\|\Box_k^{\al}f\|_{L^p(\bbR^d)}\}_{k\in\zd}\|_{l^p(\zd)}
	 \sim \big(\|\{\b_k^{d/p}\Box_k^{\al}f(\b_k k')\}_{k',k\in \zd}\big\|_{l^p(\zdd)}.
	 \]
	 When $p=\i$, the conclusion can be verified by a similar argument with suitable modification.
	 
	 Finally, we turn to the inverse direction. Note that $\bka g\in \calS'_{B(\langle
	 	k\rangle^{\frac{\alpha}{1-\alpha}}k, C\langle
	 	k\rangle^{\frac{\alpha}{1-\alpha}})}$.
	 	Using Proposition \ref{pp-Fourierexp-alpha}, we find that $\bka g\in L^p$ with the norm estimates:
	 	\be
	 	\|\bka g\|_{\lpd}
	 	\sim 
	 	\big\|\{\b_k^{d/p}\Box_k^{\al}g(\b_k k')\}_{k'\in \zd}\big\|_{l^p(\zd)}<\fy.
	 	\ee
	 	Then the desired conclusion follows by the definition of $\al$-modulation space.
\end{proof}

\begin{proposition}\label{pp-tbs}
	Let $p\in (0,\fy]$. Suppose that $\{\psi_k: k\in \zn\}$ is a totally bounded subset of $\mpad$. 
	Then, for any bounded subset of $l^{p\wedge 1}(\zn)$ denoted by $B$,
	the following set
	\be
	\Psi=\{f=\sum_{k\in \zn}\la_k\psi_k:\ \{\la_k\}_{k\in \zn}\in B\}
	\ee
	is also totally bounded in $\mpad$.
\end{proposition}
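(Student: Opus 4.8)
The plan is to prove the total boundedness of $\Psi$ directly, by producing for each $\e>0$ an explicit finite $\e$-net. Throughout I would use that $\mpad$ is a quasi-Banach space satisfying the $(p\wedge 1)$-triangle inequality $\|\sum_k f_k\|_{\mpad}^{p\wedge 1}\le \sum_k\|f_k\|_{\mpad}^{p\wedge 1}$. Since a totally bounded set is bounded, put $M:=\sup_k\|\psi_k\|_{\mpad}<\fy$; and since $B$ is bounded in $l^{p\wedge 1}(\zn)$, put $L:=\sup_{\{\la_k\}\in B}\|\{\la_k\}\|_{l^{p\wedge 1}(\zn)}<\fy$. That same inequality shows each series $f=\sum_k\la_k\psi_k$ converges in $\mpad$ (for $p=\fy$, absolutely in the Banach space $M^{\fy}_{\al}(\rd)$), so $\Psi$ is well defined and bounded.

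The decisive step is the control of the tail. A bounded subset of $l^{p\wedge 1}(\zn)$ need not have uniformly decaying tails, so one cannot simply truncate the index $k$; the remedy is to use the \emph{total} boundedness of $\{\psi_k\}$ (not merely its boundedness) to collapse the infinite sum over $k$ into a finite one. Given $\d>0$, choose a finite $\d$-net $\{g_1,\dots,g_m\}\subset\mpad$ for $\{\psi_k\}$ together with a map $\s:\zn\to\{1,\dots,m\}$ satisfying $\|\psi_k-g_{\s(k)}\|_{\mpad}\le\d$ for all $k$. For $f=\sum_k\la_k\psi_k\in\Psi$ I would write $f=\sum_{j=1}^m c_j g_j+R$, where $c_j:=\sum_{k:\s(k)=j}\la_k$ and $R:=\sum_k\la_k(\psi_k-g_{\s(k)})$. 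The remainder is uniformly small: by the $(p\wedge 1)$-inequality, $\|R\|_{\mpad}^{p\wedge 1}\le\sum_k|\la_k|^{p\wedge 1}\|\psi_k-g_{\s(k)}\|_{\mpad}^{p\wedge 1}\le\d^{p\wedge 1}L^{p\wedge 1}$, hence $\|R\|_{\mpad}\le\d L$. The finite part lies in the fixed finite-dimensional subspace $V:=\mathrm{span}\{g_1,\dots,g_m\}$, and its coefficients are automatically bounded: using the embedding $l^{p\wedge 1}(\zn)\hookrightarrow l^1(\zn)$ (valid since $p\wedge 1\le 1$) one gets $|c_j|\le\sum_k|\la_k|\le\|\{\la_k\}\|_{l^{p\wedge 1}(\zn)}\le L$ for every $j$. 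Thus $\sum_{j}c_jg_j$ ranges in $\calF:=\{\sum_{j=1}^m c_jg_j:\ |c_j|\le L\}$, a bounded subset of the finite-dimensional space $V$ and therefore totally bounded.

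To conclude, fix $\e>0$ and choose $\d$ and a finite net radius $\eta$ for $\calF$ small enough that $(\d L)^{p\wedge 1}+\eta^{p\wedge 1}\le\e^{p\wedge 1}$; let $\{F_1,\dots,F_s\}$ be an $\eta$-net of $\calF$. Then for every $f=\sum_j c_jg_j+R\in\Psi$ there is $F_i$ with $\|\sum_j c_jg_j-F_i\|_{\mpad}\le\eta$, so $\|f-F_i\|_{\mpad}^{p\wedge 1}\le\|R\|_{\mpad}^{p\wedge 1}+\|\sum_j c_jg_j-F_i\|_{\mpad}^{p\wedge 1}\le\e^{p\wedge 1}$, and $\{F_1,\dots,F_s\}$ is a finite $\e$-net for $\Psi$. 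I expect the only real obstacle to be exactly the tail issue highlighted above: the temptation is to truncate in $k$, which fails since $B$ is only norm-bounded in $l^{p\wedge 1}$; the correct move is to transfer the truncation onto the totally bounded family $\{\psi_k\}$, which reduces everything to a bounded set in a finite-dimensional space. All remaining estimates are routine applications of the $(p\wedge 1)$-quasi-norm inequality and the $l^{p\wedge 1}\hookrightarrow l^1$ embedding, and the argument is uniform in $p\in(0,\fy]$.
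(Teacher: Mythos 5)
Your proposal is correct and follows essentially the same route as the paper's proof: both use the finite $\e$-net of $\{\psi_k\}$ with an assignment map to split $f$ into a remainder controlled by the $(p\wedge 1)$-triangle inequality and a finite linear combination of net elements whose coefficients are bounded via $l^{p\wedge 1}\hookrightarrow l^1$, then conclude from total boundedness of a bounded set in a finite-dimensional span. The only differences are cosmetic (the paper normalizes $B$ to the unit ball, while you keep a general bound $L$ and spell out the convergence of the series and the final choice of radii more explicitly).
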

\begin{proof}
	Without loss of generality, we assume that $B=\big\{\{\la_k\}_{k\in \zn}: \|\{\la_k\}_{k\in \zn}\|_{l^{p\wedge 1}(\zn)}\leq 1\big\}$.
	For any fixed small constant $\ep>0$, since $\{\psi_k\}_{k\in \zn}$ is totally bounded in $\mpad$, there exist a finite $\ep$-net denoted by
	$
	H_{\ep}=\{h_j\}_{j=1}^N
	$
	such that for any $k\in \zn$, there exists a $j\in \{1,2,\cdots,N\}$ such that $\|\psi_k-h_j\|_{\mpad}<\ep$. 
	Such a relationship leads to a (possibly non-unique) mapping $\r$ from $\zn$ into $\{1,2,\cdots,N\}$ such that
	\be
	\|\psi_k-h_{\r(k)}\|_{\mpad}<\ep.
	\ee
	Using this map, we have a decomposition of $\zn$
	\be
	\zn=\bigcup_{j=1}^d\r^{-1}(j).
	\ee
	Write
	\be
	\begin{split}
			\sum_{k\in \zn}\la_k\psi_k
		= &
		\sum_{j=1}^N\sum_{k\in \r^{-1}(j)}\la_k\psi_k
		\\
		= &
		\sum_{j=1}^N\sum_{k\in \r^{-1}(j)}\la_k(\psi_k-h_j)+\sum_{j=1}^N\big(\sum_{k\in \r^{-1}(j)}\la_k\big)h_j.
	\end{split}
	\ee
	Note that
	\be
	\begin{split}
		\big\|\sum_{j=1}^N\sum_{k\in \r^{-1}(j)}\la_k(\psi_k-h_j)\big\|_{\mpad}
		\leq &
		\big(\sum_{j=1}^N\sum_{k\in \r^{-1}(j)}\big\|\la_k(\psi_k-h_j)\big\|_{\mpad}^{p\wedge 1}\big)^{1/p\wedge 1}
		\\
		< &
		\ep\big(\sum_{j=1}^N\sum_{k\in \r^{-1}(j)}|\la_k|^{p\wedge 1}\big)^{1/p\wedge 1}\leq \ep.
	\end{split}
	\ee
	On the other hand, 
	since
	\be
	\bigg|\sum_{k\in \r^{-1}(j)}\la_k\bigg|\leq \|(\la_k)_{k\in \zn}\|_{l^{p\wedge 1}(\zn)}\leq 1,
	\ee
    then the subset 
	\be
	\bigg\{\sum_{j=1}^N\big(\sum_{k\in \r^{-1}(j)}\la_k\big)h_j: (\la_k)_{k\in \zn}\in B\bigg\}
	\ee
	has a finite $\epsilon$ net in $\mpad$. The above arguments yield that $\Psi$ has a finite $2^{\frac{1}{p\wedge 1}}\ep$-net. For the arbitrary of $\ep$, we obtain 
	the desired conclusion that $\Psi$ is a totally bounded subset of $\mpad$.
\end{proof}

Before proceeding with our main thread, we give the following two characterizations of the totally bounded subsets in $l^p$ and $M^p_{\al}$, respectively.
We omit the proof for the $l^p$ case, as it can be verified directly by the fact that any bounded set is totally bounded in metric space with finite dimension.
\begin{lemma}\label{lm-lp-tot.bdd}
	Let $p\in (0,\fy)$.
	A subset $\calF\subset l^p(\zd)$ is totally bounded if and only if the following two statements hold:
	\bn
	\item Uniform boundedness: 
	\ben\label{lm-lp-u.bdd}
	\sup_{\vec{a}\in \calF}\|\vec{a}\|_{l^p}<\fy;
	\een
	\item Uniform disapperance: 
	\ben\label{lm-lp-u.disa}
	\lim_{N\rightarrow \fy}\sup_{\vec{a}\in \calF}
	\big\|\{a_k\}_{k\in \zd\bs [-N,N]^{d}}\big\|_{l^p(\zd)}=0.
	\een
	\en
\end{lemma}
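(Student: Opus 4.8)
The plan is to prove both implications by splitting each sequence into its restriction to a finite index box (its \emph{head}) and the remaining \emph{tail}, thereby reducing the problem to the elementary fact that bounded subsets of a finite-dimensional normed space are totally bounded. This is the discrete analogue of the Riesz--Fr\'echet--Kolmogorov compactness criterion, where condition \eqref{lm-lp-u.disa} plays the role of uniform smallness of tails (equicontinuity of translates in the continuous setting) and condition \eqref{lm-lp-u.bdd} supplies the uniform bound on the heads.

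For the necessity direction I would assume $\calF$ is totally bounded and fix a finite $\epsilon$-net $\{h_1,\dots,h_M\}\subset\calF$. Since a totally bounded set is bounded, \eqref{lm-lp-u.bdd} follows immediately: any $\vec a\in\calF$ lies within $\epsilon$ of some $h_j$, so $\|\vec a\|_{l^p}$ is controlled by $\max_j\|h_j\|_{l^p}$ up to the quasi-triangle constant. For \eqref{lm-lp-u.disa} I would use that each fixed $h_j\in l^p(\zd)$ has vanishing tail, namely $\|\{(h_j)_k\}_{k\in\zd\bs[-N,N]^d}\|_{l^p}\to0$ as $N\to\fy$; because there are only finitely many $h_j$, one $N$ works simultaneously for all of them. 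Then for arbitrary $\vec a\in\calF$ approximated by $h_j$, I would estimate the tail of $\vec a$ by the tail of $h_j$ plus the tail of $\vec a-h_j$, the latter bounded by $\|\vec a-h_j\|_{l^p}<\epsilon$, which yields \eqref{lm-lp-u.disa} uniformly.

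For the sufficiency direction, given $\epsilon>0$ I would invoke \eqref{lm-lp-u.disa} to choose $N$ with $\sup_{\vec a\in\calF}\|\{a_k\}_{k\in\zd\bs[-N,N]^d}\|_{l^p}<\epsilon$, and introduce the truncation $P_N$ that restricts a sequence to the finite index set $[-N,N]^d\cap\zd$. Since $\|P_N\vec a\|_{l^p}\leq\|\vec a\|_{l^p}$, condition \eqref{lm-lp-u.bdd} makes $P_N(\calF)$ a bounded subset of the finite-dimensional space $\bbC^{(2N+1)^d}$, hence totally bounded; let $\{g_1,\dots,g_L\}$ be a finite $\epsilon$-net for it, regarded as sequences supported in the box. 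For every $\vec a\in\calF$ one then has $\|\vec a-P_N\vec a\|_{l^p}<\epsilon$ together with $\|P_N\vec a-g_i\|_{l^p}<\epsilon$ for a suitable $i$, so $\{g_1,\dots,g_L\}$ is a finite $2^{1/(p\wedge1)}\epsilon$-net for $\calF$, establishing total boundedness.

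The only point requiring genuine care is the quasi-Banach regime $p\in(0,1)$, where $\|\cdot\|_{l^p}$ is merely a quasi-norm: throughout I would replace the ordinary triangle inequality by the $p$-triangle inequality $\|x+y\|_{l^p}^{p\wedge1}\leq\|x\|_{l^p}^{p\wedge1}+\|y\|_{l^p}^{p\wedge1}$, exactly as in Proposition \ref{pp-tbs}. This alters only the explicit covering constants and leaves the head--tail decomposition and the finite-dimensionality of the head untouched, so the argument runs uniformly for all $p\in(0,\fy)$; there is no substantial obstacle beyond this bookkeeping.
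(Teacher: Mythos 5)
Your proof is correct and follows exactly the route the paper indicates: the paper omits the proof of Lemma \ref{lm-lp-tot.bdd}, remarking only that it can be verified directly from the fact that any bounded set in a finite-dimensional metric space is totally bounded, which is precisely the head--tail truncation argument you carry out (necessity via a finite net with uniformly vanishing tails, sufficiency via the projection $P_N$ onto the finite box plus finite-dimensional total boundedness). Your extra care with the $p$-triangle inequality for $p\in(0,1)$ is also appropriate and consistent with how the paper handles the quasi-norm regime elsewhere (e.g.\ in Proposition \ref{pp-tbs}).
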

%
\begin{theorem}\label{thm-tbsa}
	Let $p\in (0,\fy)$, and $\calF$ be a subset of $\mpad$.
	The following three statements are equivalent: 
	\bn
	\item $\calF$ is totally bounded;
	\item $\calF$ satisfies the properties of uniform boundedness and disappearance:
	 \bn
	 \item Uniform boundedness: 
	 \ben\label{thm-tbsa-cd1}
	 \sup_{f\in \calF}\|f\|_{\mpad}<\fy;
	 \een
	 \item Uniform disapperance: 
	 \ben\label{thm-tbsa-cd2}
	 \lim_{N\rightarrow \fy}\sup_{f\in \calF}
	 \big\|\{\lan k\ran^{\frac{-d\al}{(1-\al)p}}\Box_k^{\al}f(\b_k k')\}_{(k,k')\in \zdd\bs [-N,N]^{2d}}\big\|_{l^p(\zdd)}=0.
	 \een
	 \en
	\item $\wt{\calF}:=\Big\{\big\{\lan k\ran^{\frac{-d\al}{(1-\al)p}} \Box_k^{\al}f(\b_k k')\big\}_{(k,k')\in \zdd}: f\in \calF\Big\}$ is totally bounded in $l^p(\zdd)$.
	\en
\end{theorem}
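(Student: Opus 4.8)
The plan is to handle all three conditions through a single change of variables, the analysis (coefficient) map $\Phi\colon \mpad \to l^p(\zdd)$ defined by $\Phi f=\{\b_k^{d/p}\Box_k^{\al}f(\b_k k')\}_{(k,k')\in \zdd}$. Because $\b_k^{d/p}=(\la/2rC)^{d/p}\lan k\ran^{-\frac{\al d}{(1-\al)p}}$, the set $\wt{\calF}$ in statement (3) is precisely the fixed scalar multiple $(2rC/\la)^{d/p}\Phi(\calF)$ of the image of $\calF$. Since total boundedness, uniform boundedness, and uniform tail disappearance in $l^p(\zdd)$ are all unaffected by multiplication by a nonzero constant, it suffices to argue in terms of $\Phi(\calF)$. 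The decisive input is Proposition \ref{pp-exp-alpha}, which furnishes the two-sided estimate $\|\Phi f\|_{l^p(\zdd)}\sim \|f\|_{\mpad}$ with constants independent of $f$ and $\la$; by linearity this becomes $\|\Phi f-\Phi g\|_{l^p(\zdd)}\sim \|f-g\|_{\mpad}$, so $\Phi$ is a bi-Lipschitz bijection of $\mpad$ onto its image $\Phi(\mpad)\subset l^p(\zdd)$.

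For the equivalence $(1)\Leftrightarrow(3)$ I would transfer total boundedness across this bi-Lipschitz bijection. If $\calF$ is totally bounded, the upper estimate sends a finite $\ep$-net of $\calF$ to a finite $O(\ep)$-net of $\Phi(\calF)$, so $\Phi(\calF)$ is totally bounded. Conversely, $\Phi$ is injective ($\Phi f=0$ forces $f=0$) and its inverse on $\Phi(\mpad)$ is Lipschitz by the lower estimate; thus a finite $\ep$-net of $\Phi(\calF)$, after replacing each net point by a nearby point of $\Phi(\calF)$, pulls back under $\Phi^{-1}$ to a finite $O(\ep)$-net of $\calF$. This establishes $(1)\Leftrightarrow(3)$.

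For $(2)\Leftrightarrow(3)$ I would apply the $l^p$ characterization of total boundedness, Lemma \ref{lm-lp-tot.bdd}, to the subset $\wt{\calF}\subset l^p(\zdd)$ (the lemma is stated over $\zd$ but applies verbatim over $\zdd$). It identifies total boundedness of $\wt{\calF}$ with the conjunction of uniform boundedness $\sup_{f\in\calF}\|\Phi f\|_{l^p(\zdd)}<\fy$ and uniform tail disappearance. By the norm equivalence the former is equivalent to \eqref{thm-tbsa-cd1}, while the latter coincides, after the fixed rescaling relating $\wt{\calF}$ and $\Phi(\calF)$, with the uniform disappearance condition \eqref{thm-tbsa-cd2}. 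This yields $(2)\Leftrightarrow(3)$ and completes the cycle of equivalences.

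The remaining care is bookkeeping rather than conceptual. Since $p$ may be below $1$, both $\|\cdot\|_{\mpad}$ and $\|\cdot\|_{l^p(\zdd)}$ are only quasi-norms, so every net-transfer estimate must track the fixed quasi-triangle constant, and I must ensure the implied constants in Proposition \ref{pp-exp-alpha} are uniform in $f$ and $\la$; total boundedness, being a purely metric notion, is nevertheless preserved under a bi-Lipschitz map whether or not the quasi-norm is subadditive. The main obstacle I anticipate is therefore simply confirming that $\Phi^{-1}$ is well defined and Lipschitz on the image, which rests entirely on the lower bound in Proposition \ref{pp-exp-alpha}, together with the set-membership subtlety in the converse net argument (net centers must be taken within $\Phi(\calF)$ before pulling back).
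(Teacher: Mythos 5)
Your proof is correct, but it reorganizes the argument around a different pivot than the paper. The paper proves $(1)\Leftrightarrow(2)$ directly: for $(1)\Rightarrow(2)$ it transfers the uniform-disappearance property from a finite $\ep$-net to all of $\calF$, and for $(2)\Rightarrow(1)$ it synthesizes, forming the truncated expansions
\be
\calF_N=\bigg\{\sum_{(k,k')\in \zdd\cap[-N,N]^{2d}}\b_k^d\Box_k^{\al}f(\b_k k')T_{\b_k k'}\check{\va_k^{\al}}:\ f\in\calF\bigg\},
\ee
invoking Proposition \ref{pp-tbs} (a totally bounded family of atoms summed against $l^{p\wedge 1}$-bounded coefficients stays totally bounded) to conclude each $\calF_N$ is totally bounded, and using uniform disappearance together with \eqref{pp-Fourierexp-alpha-cd2} to show $\calF_N$ approximates $\calF$ uniformly; the equivalence $(2)\Leftrightarrow(3)$ is then read off from Lemma \ref{lm-lp-tot.bdd} and the norm equivalence, exactly as you do. You instead make $(1)\Leftrightarrow(3)$ the core step, treating the analysis map $\Phi f=\{\b_k^{d/p}\Box_k^{\al}f(\b_k k')\}_{(k,k')}$ as a linear bi-Lipschitz bijection onto its image via the two-sided estimate \eqref{lm-exp-alpha-cd1}, and transferring $\ep$-nets in both directions, with the correct care of relocating net centers into $\Phi(\calF)$ (at the cost of the quasi-triangle constant) before pulling back. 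This is a genuine simplification: it bypasses Proposition \ref{pp-tbs}, the unconditional convergence of the expansion \eqref{lm-exp-alpha-cd0}, and all truncation estimates, needing only the norm equivalence plus Lemma \ref{lm-lp-tot.bdd}. What the paper's synthesis route buys is that it is the template reused verbatim in the compactness kernel theorem (Theorem \ref{thm-CKa}, step $(2)\Rightarrow(1)$), where the atoms are the images $A(T_{\b_k k'}\check{\va_k^{\al}})$, no lower bound analogous to \eqref{lm-exp-alpha-cd1} is available, and Proposition \ref{pp-tbs} is genuinely indispensable; your bi-Lipschitz shortcut works here precisely because Proposition \ref{pp-exp-alpha} controls the correspondence in both directions, a luxury absent in that later setting.
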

\begin{proof}
	$(1)\Longrightarrow(2):$
    The statement $(a)$ follows directly by the fact that a totally bounded subset is bounded.
	To verify the statement $(b)$, note that it is valid for every fixed $f\in \mp$, that is,
	\ben\label{thm-tbsa-pf1}
	\lim_{N\rightarrow \fy}\|\{\lan k\ran^{\frac{-d\al}{(1-\al)p}}\Box_k^{\al}f(\b_k k')\}_{(k,k')\in \zdd\bs [-N,N]^{2d}}\big\|_{l^p(\zdd)}=0,\een
	where we use the equivalent $\mpad$-norm in Proposition \ref{pp-exp-alpha}.
	Thus, \eqref{thm-tbsa-pf1} is also valid uniformly for the finite $\ep$-net $\{h_j\}_{j=1}^N$ of $\calF$. 
	Furthermore, \eqref{thm-tbsa-pf1} is valid uniformly for all the elements $f$ in $\calF$ by the following estimates:
	\be
	\begin{split}
		&\big\|\{\lan k\ran^{\frac{-d\al}{(1-\al)p}}\Box_k^{\al}f(\b_k k')\}_{(k,k')\in \zdd\bs [-N,N]^{2d}}\big\|_{l^p(\zdd)}
		\\
		\lesssim &
		\big\|\{\lan k\ran^{\frac{-d\al}{(1-\al)p}}\Box_k^{\al}h_j(\b_k k')\}_{(k,k')\in \zdd\bs [-N,N]^{2d}}\big\|_{l^p(\zdd)}
		\\
		&+
		\big\|\{\lan k\ran^{\frac{-d\al}{(1-\al)p}}\Box_k^{\al}(f-h_j)(\b_k k')\}_{(k,k')\in \zdd\bs [-N,N]^{2d}}\big\|_{l^p(\zdd)}
		\\
		\lesssim &
		\big\|\{\lan k\ran^{\frac{-d\al}{(1-\al)p}}\Box_k^{\al}h_j(\b_k k')\}_{(k,k')\in \zdd\bs [-N,N]^{2d}}\big\|_{l^p(\zdd)}
		+\|f-h_j\|_{\mpad}
		\\
		\leq &
		\big\|\{\lan k\ran^{\frac{-d\al}{(1-\al)p}}\Box_k^{\al}h_j(\b_k k')\}_{(k,k')\in \zdd\bs [-N,N]^{2d}}\big\|_{l^p(\zdd)}
		+\ep.
	\end{split}
	\ee
	
	$(2)\Longrightarrow(1):$
	First, observe that $\{\lan k\ran^{\frac{\al d}{1-\al}(1/p-1)}T_{\b_k k'}\check{\va_k^{\al}}: (k,k')\in \zdd\cap [-N,N]^{2d}\}$ is totally bounded in $\mpad$.
    Denote by
	\be
	\calF_N=\bigg\{\sum_{(k,k')\in \zdd\cap [-N,N]^{2d}}\b_k^d\Box_k^{\al}f(\b_k k')T_{\b_k k'}\check{\va_k^{\al}}: f\in \calF\bigg\}.
	\ee
	It follows by $(a)$, \eqref{lm-exp-alpha-cd1} and Proposition \ref{pp-tbs} that $\calF_N$ is totally bounded in $\mpad$.
	Moreover, by using \eqref{pp-Fourierexp-alpha-cd2},
    the distance between $\calF$ and $\calF_N$ is bounded by
	\be
	\begin{split}
			&\sup_{f\in \calF}\bigg\|f-\sum_{(k,k')\in \zdd\cap [-N,N]^{2d}}\b_k^d\Box_k^{\al}f(\b_k k')T_{\b_k k'}\check{\va_k^{\al}}\bigg\|_{\mpad}
			\\
			= &
			\sup_{f\in \calF}\bigg\|\sum_{(k,k')\in \zdd\bs [-N,N]^{2d}}\b_k^d\Box_k^{\al}f(\b_k k')T_{\b_k k'}\check{\va_k^{\al}}\bigg\|_{\mpad}
            \\
			\lesssim &
			\sup_{f\in \calF}
            \bigg(\sum_{k\in \zdd\cap [-N,N]^{d}}
            \bigg(\bigg\|\sum_{k'\in \zdd\bs [-N,N]^{d}}\b_k^d\Box_k^{\al}f(\b_k k')T_{\b_k k'}\check{\va_k^{\al}}\bigg\|^p_{L^p}\bigg)^{1/p}
            \\
            + &
            \sum_{k\in \zdd\bs [-N,N]^{d}}
            \bigg(\bigg\|\sum_{k'\in \zdd}\b_k^d\Box_k^{\al}f(\b_k k')T_{\b_k k'}\check{\va_k^{\al}}\bigg\|^p_{L^p}\bigg)^{1/p}\bigg)
			\\
			\lesssim &
			\sup_{f\in \calF}\big\|\{\lan k\ran^{\frac{-d\al}{(1-\al)p}}\Box_k^{\al}f(\b_k k')\}_{(k,k')\in \zdd\bs [-N,N]^{2d}}\big\|_{l^p(\zdd)},
	\end{split}
	\ee
	which tends to zero as $N\rightarrow \fy$.
	The above arguments yield that $\calF$ is a totally bounded subset of $\mpad$.
	
	$(2)\Longleftrightarrow(3):$ 
	It follows by \eqref{lm-exp-alpha-cd1} that $(a)$ is equivalent to 
	\ben\label{pf-mp-norm-discrete}
	\sup_{f\in \calF}
	\big\|\{\lan k\ran^{\frac{-d\al}{(1-\al)p}}\Box_k^{\al}f(\b_k k')\}_{(k,k')\in \zdd}\big\|_{l^p(\zdd)}<\i.
	\een
	From this, the condition $(b)$ and Lemma \ref{lm-lp-tot.bdd}, we conclude that (2) is equivalent to that 
    $\wt{\calF}$ is a totally bounded subset of $l^p(\zdd)$.
\end{proof}

By using the viewpoint that the action of the linear operator $A$ on the function space can be reduced to its action
on the atoms, we can deduce the following characterization for the compactness of a linear operator on $\al$-modulation spaces.

\begin{theorem}[Compactness kernel theorem for $\alpha$-modulation spaces]\label{thm-CKa}
	Suppose that $p,q\in (0,\fy)$ satisfying $p\leq q\wedge 1$.
	Let $A\in \calL(\calS(\rd), \calS'(\rd))$ be a linear operator with
	distributional kernel $K_A$.	
	For any $\la\in (0, 1]$, denote $\b_k=\frac{\la}{2rC\lan k\ran^{\frac{\al}{1-\al}}}$ for all $k\in \zd$.
	The following statements are equivalent.
	\bn
	\item
	$A\in\calK(M^p_{\al}(\rd), M^q_{\al}(\rd))$;
	\item
	$\{\lan k\ran^{\frac{\al d}{1-\al}(1/p-1)}A(T_{\b_k k'} \check{\va_k^{\al}}): k,k' \in \zd\}$
	is a totally bounded subset of $M^{q}_{\al}(\rd)$;
	\item
	The following two properties are valid:
	
	(a) Uniform boundedness:
	\be
	\sup\limits_{k,k' \in \zd}\lan k\ran^{\frac{\al d}{1-\al}(1/p-1)}
	\big\|\big\{\lan l\ran^{\frac{-\al d}{(1-\al)q}}\lan K_A(z',z),  \overline{\check{\eta_l^{\al}}(\b_l l'-z')\otimes\check{\va_k^{\al}}(z-\b_k k')}  \ran\big\}_{l, l'\in \zd}\big\|_{l^q(\zdd)}<\fy,
	\ee
	
	(b) Uniform disappearance:
	\be
	\lim\limits_{N\rightarrow \fy}\sup\limits_{k,k' \in \zd}\lan k\ran^{\frac{\al d}{1-\al}(1/p-1)}
	\big\|\big\{\lan l\ran^{\frac{-\al d}{(1-\al)q}}\lan K_A(z',z),  \overline{\check{\eta_l^{\al}}(\b_l l'-z')\otimes\check{\va_k^{\al}}(z-\b_k k')}  \ran\big\}_{(l, l')\in \zdd\bs [-N,N]^{2d}}\big\|_{l^q(\zdd)}=0.
	\ee
	\item
	$\Big\{ \big\{\lan k\ran^{\frac{\al d}{1-\al}(1/p-1)}\lan l\ran^{\frac{-\al d}{(1-\al)q}}\lan K_A(z',z),  \overline{\check{\eta_l^{\al}}(\b_l l'-z')\otimes\check{\va_k^{\al}}(z-\b_k k')}  \ran\big\}_{l, l'\in \zd}: k,k' \in \zd\big\}$
	is a totally bounded subset of $l^q(\zdd)$.
	\en
\end{theorem}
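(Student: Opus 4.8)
The plan is to follow the architecture of the boundedness kernel theorem (Theorem \ref{thm-BKa}), replacing ``bounded subset'' by ``totally bounded subset'' throughout, and to split the four-fold equivalence into two blocks: the operator-theoretic equivalence $(1)\Leftrightarrow(2)$, resting on the expansion of Proposition \ref{pp-exp-alpha} together with the stability result Proposition \ref{pp-tbs}; and the sequence-space equivalences $(2)\Leftrightarrow(3)\Leftrightarrow(4)$, obtained by feeding the atom family into the total-boundedness criterion of Theorem \ref{thm-tbsa}. For $(1)\Rightarrow(2)$ I would argue as at the start of the proof of Theorem \ref{thm-BKa}: the normalized atom family $\{\lan k\ran^{\frac{\al d}{1-\al}(1/p-1)}T_{\b_k k'}\check{\va_k^{\al}}:k,k'\in\zd\}$ is a bounded subset of $\mpad$ by the estimate \eqref{thm-BKa-pf1}, and a compact operator carries bounded sets to totally bounded sets; hence its image under $A$, which is exactly the family in $(2)$, is totally bounded in $\mqad$.

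The substance lies in $(2)\Rightarrow(1)$. Since total boundedness implies boundedness, $(2)$ already forces $A\in\calL(\mpad,\mqad)$ by Theorem \ref{thm-BKa}, so applying $A$ term-by-term to the expansion of Proposition \ref{pp-exp-alpha} gives a series converging in $\mqad$,
\[
Af=\sum_{k,k'\in\zd}\lambda_{k,k'}(f)\,\psi_{k,k'},\qquad
\psi_{k,k'}:=\lan k\ran^{\frac{\al d}{1-\al}(1/p-1)}A(T_{\b_k k'}\check{\va_k^{\al}}),
\]
where $\lambda_{k,k'}(f):=\b_k^d\lan k\ran^{-\frac{\al d}{1-\al}(1/p-1)}\Box_k^{\al}f(\b_k k')$. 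The key step is to show that as $f$ ranges over the unit ball of $\mpad$, the coefficient sequences $\{\lambda_{k,k'}(f)\}$ remain inside a fixed bounded subset of $l^{q\wedge 1}(\zdd)$. Using $p\le q\wedge1$ (so $l^p\hookrightarrow l^{q\wedge1}$) on the inner sum in $k'$, the norm equivalence $\|\{\Box_k^{\al}f(\b_k k')\}_{k'}\|_{l^p}\sim\b_k^{-d/p}\|\Box_k^{\al}f\|_{L^p}$ from Proposition \ref{pp-Fourierexp-alpha}, and $\b_k=\la/(2rC\lan k\ran^{\frac{\al}{1-\al}})$, one finds that the weight $\b_k^{d(1-1/p)}$ contributes precisely $\lan k\ran^{\frac{\al d}{1-\al}(1/p-1)}$ per $(q\wedge1)$-power, cancelling the normalizing factor $\lan k\ran^{-\frac{\al d}{1-\al}(1/p-1)}$ built into $\lambda_{k,k'}$; a second use of $l^p\hookrightarrow l^{q\wedge1}$ in the outer sum over $k$ bounds the whole expression by $\|f\|_{\mpad}$. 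With $\{\psi_{k,k'}\}$ totally bounded (this is $(2)$) and the coefficients confined to a bounded ball of $l^{q\wedge1}(\zdd)$, Proposition \ref{pp-tbs} shows that $\{Af:\|f\|_{\mpad}\le1\}$ is totally bounded, i.e. $A\in\calK(\mpad,\mqad)$.

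The equivalences $(2)\Leftrightarrow(3)\Leftrightarrow(4)$ follow by applying Theorem \ref{thm-tbsa} to the set $\calF=\{\psi_{k,k'}:k,k'\in\zd\}\subset\mqad$. The discretizing map of that theorem sends $\psi_{k,k'}$ to the array $\{\lan l\ran^{-\frac{\al d}{(1-\al)q}}\Box_l^{\al}\psi_{k,k'}(\b_l l')\}_{l,l'}$, and the identity \eqref{thm-BKa-pf4} rewrites $\Box_l^{\al}(A(T_{\b_k k'}\check{\va_k^{\al}}))(\b_l l')$ as the kernel pairing $\lan K_A(z',z),\overline{\check{\eta_l^{\al}}(\b_l l'-z')\otimes\check{\va_k^{\al}}(z-\b_k k')}\ran$, so this array is exactly the one appearing in $(3)$ and $(4)$. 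Theorem \ref{thm-tbsa}(2) then matches the total boundedness of $\calF$ (equivalently $(2)$) with the uniform boundedness and uniform disappearance conditions $(3)$(a)--(b), while Theorem \ref{thm-tbsa}(3) matches it with the total boundedness of the image array in $l^q(\zdd)$, which is statement $(4)$.

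I expect the main obstacle to be the bookkeeping in $(2)\Rightarrow(1)$: checking, uniformly over the unit ball, that the $\lan k\ran$-weights cancel so that all coefficient sequences land in a single bounded subset of $l^{q\wedge1}(\zdd)$, and verifying that the inclusion $\{Af:\|f\|_{\mpad}\le1\}\subseteq\{\sum_{k,k'}\lambda_{k,k'}\psi_{k,k'}:\{\lambda_{k,k'}\}\in B\}$ with $B\subset l^{q\wedge1}(\zdd)$ bounded is legitimate---this is precisely where one needs that the expansion survives term-by-term application of $A$ in $\mqad$, which the boundedness of $A$ already granted by Theorem \ref{thm-BKa} supplies. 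Everything else is a direct transcription of the totally-bounded analogues of the bounded-case arguments.
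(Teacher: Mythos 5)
Your proposal is correct and follows essentially the same route as the paper's own proof: $(1)\Rightarrow(2)$ via compactness applied to the bounded atom family, $(2)\Rightarrow(1)$ via Theorem \ref{thm-BKa} for boundedness, the expansion of Proposition \ref{pp-exp-alpha}, the embedding $l^p\hookrightarrow l^{q\wedge 1}$ to confine the coefficients to a bounded ball, and Proposition \ref{pp-tbs} to conclude total boundedness of the image; and $(2)\Leftrightarrow(3)\Leftrightarrow(4)$ via Theorem \ref{thm-tbsa} combined with the kernel identity \eqref{thm-BKa-pf4}. Your weight bookkeeping ($\b_k^{d(1-1/p)}\sim_\la\lan k\ran^{\frac{\al d}{1-\al}(1/p-1)}$, so the coefficients reduce to $\lan k\ran^{-\frac{\al d}{(1-\al)p}}\Box_k^{\al}f(\b_k k')$ with $l^p$-norm comparable to $\|f\|_{\mpad}$) matches the paper's computation exactly.
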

\begin{proof}
	First, the relation $(1)\Longrightarrow (2)$ follows by $A\in \calK(\mpad,\mqad)$
	and the fact that
	\be
	\{\lan k\ran^{\frac{\al d}{1-\al}(1/p-1)}T_y\check{\va_k^{\al}}\}_{k\in \zd, y\in \rd}
	\ee
	is a bounded subset of $\mpad$.
	
	Next, we turn to the relation $(2)\Longrightarrow (1)$.
	Note that the statement (2) implies that $\{\lan k\ran^{\frac{\al d}{1-\al}(1/p-1)}A(T_{\b_k k'} \check{\va_k^{\al}})\}_{k,k' \in \zd}$
	is a bounded subset of $M^{q}_{\al}(\rd)$.
	Using this and Theorem \ref{thm-BKa}, we have $A\in \calL(\mpad,\mqad)$.
	Denote by $\calF$ a bounded subset of $\mpad$. For $f\in\calF$, write
	\be
	f=\sum_{k,k'\in \zd}\b_k^d\Box_k^{\al}f(\b_k k')T_{\b_k k'}\check{\va_k^{\al}}
	\ee
	with unconditional convergence in $\mpad$.
	Applying the operator $A$ on both sides of the above expansion, we obtain
		\be
		\begin{split}
			Af
			= &
			\sum_{k,k'\in \zd}\b_k^d\Box_k^{\al}f(\b_k k')A(T_{\b_k k'}\check{\va_k^{\al}})
			\\
			= &
			\sum_{k,k'\in \zd}
			\b_k^{d/p}\Box_k^{\al}f(\b_k k') \b_k^{d(1-1/p)}A(T_{\b_k k'}\check{\va_k^{\al}})
		\end{split}
	\ee
	with unconditional convergence in $\mqad$.
	Note that 
	\be
	\begin{split}
		\big\|\{\lan k\ran^{\frac{-\al d}{(1-\al)p}}\Box_k^{\al}f(\b_k k')\}_{k',k\in \zd}\big\|_{l^{q\wedge 1}(\zdd)}
		\lesssim &
		\big\|\{\lan k\ran^{\frac{-\al d}{(1-\al)p}}\Box_k^{\al}f(\b_k k')\}_{k',k\in \zd}\big\|_{l^{p}(\zdd)}
		\\
		\lesssim & \|f\|_{\mpad}\leq \sup_{f\in \calF}\|f\|_{\mpad}.
	\end{split}
	\ee
	Combining this with the fact that $\{\lan k\ran^{\frac{\al d}{1-\al}(1/p-1)}A(T_{\b_k k'} \check{\va_k^{\al}})\}_{k,k' \in \zd}$
	is a totally bounded subset of $M^{q}_{\al}(\rd)$, we use Proposition \ref{pp-tbs} to deduce that 
	$
	\{Af: f\in \calF\}
	$
	is a totally bounded subset of $\mqad$. This implies the desired conclusion $A\in \calK(\mpad,\mqad)$.
	
	Finally, the equivalent relation $(2)\Longleftrightarrow (3)\Longleftrightarrow (4)$ follows by \eqref{lm-exp-alpha-cd1}, Theorem \ref{thm-tbsa} and \eqref{thm-BKa-pf4}. 
\end{proof}

\section{Simplification and complements}
\subsection{The concise form of the kernel theorems}
In Theorems \ref{thm-BKa}, \ref{thm-BKa-dual} and \ref{thm-CKa}, 
the statements are based on the specific expansion of the elements in $\al$-modulation space.
Obviously, the characterization of the linear operator should not based on the specific expansion, but only
depends on the starting and arriving spaces of the linear operator.
Based on this viewpoint, we would like to give the more concise form of Theorems \ref{thm-BKa}, \ref{thm-BKa-dual} and \ref{thm-CKa},
in which
the kernel of linear operator is characterized by the corresponding mixed $\al$-modulation space.

We first introduce the mixed frequency decomposition operators associated with $\rdd$:
\be
\Box_{k,l}^{\al}:= \scrF^{-1}(\eta_k^{\al}\otimes \eta_l^{\al})\scrF,\ \ k,l\in \zd,
\ee
where $\{\eta_k^{\al}\}_{k\in \zd}$ is just the function sequence mentioned in \eqref{pre-2}.
The corresponding mixed $\al$-modulation space can be naturally defined as follows.
\begin{definition}[Mixed $\al$-modulation space]\label{def-mix-alpha}
    Let $0<p_1,p_2,q_1,q_2\leq \fy$, $s, t\in \rr$.
    The mixed $\al$-modulation space $M^{p_1,p_2,q_1,q_2}_{\al,s,t}$ consists
	of all $F\in \calS'(\rdd)$ such that the (quasi-)norm
	\be
 \begin{split}
     \|F\|_{M^{p_1,p_2,q_1,q_2}_{\al,s,t}}
    = &
    \|\lan k\ran^{\frac{s}{1-\al}}\lan l\ran^{\frac{t}{1-\al}}\Box_{k,l}^{\al}F(x,y)\|_{L^{p_1,p_2,q_1,q_2}_{x,y,k,l}}
 \end{split}
    \ee
	is finite. Here $L^{p_1,p_2,q_1,q_2}_{x,y,k,l}$ denotes the mixed norm space with norm
 \be
\|H(x,y,k,l)\|_{L^{p_1,p_2,q_1,q_2}_{x,y,k,l}}
=\left(\sum_{l\in \zd}\left(\sum_{k\in \zd}\left(\int_{\rd}\left(\int_{\rd}|H(x,y,k,l)|^{p_1}dx\right)^{\frac{p_2}{p_1}}dy\right)^{\frac{q_1}{p_2}}\right)^{\frac{q_2}{q_1}}\right)^{\frac{1}{q_2}}.
 \ee
\end{definition}
\begin{remark}
    Notice that the mixed $\al$-modulation space $M^{p,p,q,q}_{\al,0,0}$ does not coincide with the 
    usual $\al$-modulation space $M^{p,q}_{\al}(\rdd)$ unless $\al=0$. See \cite{Bishop2010JoMAaA} for the definition of mixed
    modulation space, and see \cite{CleanthousGeorgiadis2020TAMS} for another type of mixed $\al$-modulation space.
\end{remark}

To state our kernel theorems we need a further extension of Definition \ref{def-mix-alpha}.
\begin{definition}[Mixed $\al$-modulation space with permutation]\label{def-mix-alpha-permutation}
    Let $0<p_1,p_2,q_1,q_2\leq \fy$, $s,t\in \rr$.
    The  mixed-modulation space $M^{p_1,p_2,q_1,q_2}_{\al,s,t}(c_i) (i=1,2)$ consists
	of all $F\in \calS'(\rdd)$ such that the (quasi-)norm
	\be
    \|F\|_{M^{p_1,p_2,q_1,q_2}_{\al,s,t}(c_1)}=\|\lan k\ran^{\frac{s}{1-\al}}\lan l\ran^{\frac{t}{1-\al}}\Box_{k,l}^{\al}F(x,y)\|_{L^{p_1,p_2,q_1,q_2}_{x,k,y,l}}
    \ee
    or
    \be
    \|F\|_{M^{p_1,p_2,q_1,q_2}_{\al,s,t}(c_2)}=\|\lan k\ran^{\frac{s}{1-\al}}\lan l\ran^{\frac{t}{1-\al}}\Box_{k,l}^{\al}F(x,y)\|_{L^{p_1,p_2,q_1,q_2}_{y,l,x,k}}
    \ee
	is finite. 
 Here $L^{p_1,p_2,q_1,q_2}_{x,k,y,l}$ and $L^{p_1,p_2,q_1,q_2}_{y,l,x,k}$ denote the mixed norm spaces having norms similar to
the norm of $L^{p_1,p_2,q_1,q_2}_{x,y,k,l}$ but with different integration order.
 
\end{definition}
Note that the function spaces $M^{q,q,\fy,\fy}_{\al,0,\a d(1/p-1)}(c_1)$ and $M^{p',p',\fy,\fy}_{\al,0,0 }(c_2)$ are used
in Theorems \ref{thm-sBKa} and \ref{thm-sBKa-dual}.
Of course one can conduct a systematic study on these mixed $\al$-modulation spaces, but this is not the focus of our article.
Here, we only do some necessary research for the needs of our main theorems.

\begin{proof}[Proof of Theorem \ref{thm-sBKa}]
    As in the proof of Theorem \ref{thm-BKa}, the relation $(1)\Longrightarrow (2)$ follows by $A\in \calL(\mpad,\mqad)$
	and the fact that
	\be
	\{\lan k\ran^{\frac{\al d}{1-\al}(1/p-1)}T_y\check{\eta_k^{\al}}\}_{k\in \zd, y\in \rd}
	\ee
	is a bounded subset of $\mpad$. 

    Next, we turn to the proof of $(2)\Longrightarrow (1)$.
    Define
    \be
    \widetilde{\va_k^{\al}}=\sum_{l\in \La_k}\eta_l^{\al},\ \ \ \La_k=\{l\in \zd: \text{supp}\eta_l^{\al}\cap B(\lan k\ran^{\frac{\al}{1-\al}}k, C\lan k\ran^{\frac{\al}{1-\al}})\neq \emptyset\}.
    \ee
    There exists a constant $\widetilde{r}>0$ such that 
    the function sequence $\{\widetilde{\va_k^{\al}}\}_{k\in \zd}$ satisfies the condition \eqref{dual-frame-alpha}.
    Note that $|\La_k|\lesssim 1$ uniformly for all $k\in \zd$. 
    If the statement (2) holds, 
    we conclude that 
    \be
    \{\lan k\ran^{\frac{\al d}{1-\al}(1/p-1)}A(T_{y} \check{\widetilde{\va_k^{\al}}}): k\in \zd, y\in \rd\}
    \ee
	is also a bounded subset of $M^{q}_{\al}(\rd)$.
    From this and Theorem \ref{thm-BKa}, we obtain the desired conclusion in the statement (1).

    Finally, let us deal with the relation $(2)\Longleftrightarrow (3)$.
    Write
	\be
	\begin{split}
		\Box_l^{\al}(A(T_y\check{\eta_k^{\al}}))(x)
        = &
        \check{\eta_l^{\al}}\ast(A(T_y\check{\eta_k^{\al}}))(x)
        \\
		= &
  \lan A(T_y\check{\eta_k^{\al}}), \overline{\check{\eta_l^{\al}}(x-\cdot)} \ran
		\\ 
		= &
  \lan K_A(z',z), \overline{\check{\eta_l^{\al}}(x-z')\otimes \check{\eta_k^{\al}}(z-y)} \ran
  \\
  = &
    \lan K_A(z',z), \overline{\check{\eta_l^{\al}}(x-z')\otimes \check{\eta_{-k}^{\al}}(y-z)} \ran
    =
    \Box_{l,-k}^{\al}K_A(x,y),
	\end{split}
    \ee
    where we use the fact
    \be
    \check{\eta_k^{\al}}(\xi)=\int_{\rd}\eta_k^{\al}(x)e^{2\pi ix\cdot \xi}dx
    =
    \int_{\rd}\eta_{-k}^{\al}(-x)e^{2\pi ix\cdot \xi}dx=\check{\eta_{-k}^{\al}}(-\xi).
    \ee
    A direct calculation yields that
    \be
    \begin{split}
        \sup_{k\in \zd, y\in \rd}\lan k\ran^{\frac{\al d}{1-\al}(1/p-1)}\|A(T_{y} \check{\eta_k^{\al}})\|_{M^q_{\al}}
        = &
        \sup_{k\in \zd, y\in \rd}\lan k\ran^{\frac{\al d}{1-\al}(1/p-1)}
        \|\{\|\Box_l^{\al}(A(T_{y} \check{\eta_k^{\al}}))(x)\|_{L^q_x}\}_{l\in \zd}\|_{l^q}
        \\
        = &
        \sup_{k\in \zd, y\in \rd}\lan k\ran^{\frac{\al d}{1-\al}(1/p-1)}
        \|\{\|\Box_{l,-k}^{\al}K_A(x,y)\|_{L^q_x}\}_{l\in \zd}\|_{l^q}
        \\
        = &
        \sup_{k\in \zd, y\in \rd}\lan k\ran^{\frac{\al d}{1-\al}(1/p-1)}
        \|\{\|\Box_{l,k}^{\al}K_A(x,y)\|_{L^q_x}\}_{l\in \zd}\|_{l^q}
        \\
        = &
        \|K_A\|_{M^{q,q,\fy,\fy}_{\al,0,\al d(1/p-1)}(c_1)}.
    \end{split}
    \ee
\end{proof}

Combining Theorem \ref{thm-sBKa} and a duality argument, we give the proof of Theorem \ref{thm-sBKa-dual}.
\begin{proof}[The proof of Theorem \ref{thm-sBKa-dual}.]
    Define $\widetilde{A}\in \calL(\calS, \calS')$ by $\langle \wt{A}g,f\ran:= \overline{\lan Af, g\ran}$
    for all $f,g \in \calS$.
   It follows by Proposition \ref{pp-dual-bd} and Theorem \ref{thm-BKa} that
    $A\in \calL(M^p_{\al}, M^{\fy}_{\al})$
    is equivalent to $\wt{A}\in \calL(M^1_{\al}, M^{p'}_{\al})$.
    From this and Theorem  \ref{thm-sBKa}, we obtain
    \be
    A\in \calL(M^p_{\al}, M^{\fy}_{\al})
    \Longleftrightarrow 
    \wt{A}\in \calL(M^1_{\al}, M^{p'}_{\al})
    \Longleftrightarrow
    K_{\wt{A}}\in M^{p',p',\fy,\fy}_{\al,0,0}(c_1).
    \ee
    Note that
    \be
    \begin{split}
        \Box_{k,l}^{\a}K_{\wt{A}}(x,y)
  = &
  \lan K_{\wt{A}}(z',z), \overline{\check{\eta_k^{\al}}(x-z')\otimes \check{\eta_l^{\al}}(y-z)}\ran
  \\
  = &
  \overline{\lan K_{A}(z',z), \check{\eta_l^{\al}}(y-z')\otimes \check{\eta_k^{\al}}(x-z)\ran} 
  \\
  = &
  \overline{\lan K_{A}(z',z), \overline{\check{\eta_{-l}^{\al}}(y-z')\otimes \check{\eta_{-k}^{\al}}(x-z)}\ran} 
  =\overline{\Box_{-l,-k}K_{A}(y,x)}.
    \end{split}
    \ee
We have
\be
\begin{split}
    \|K_{\wt{A}}\|_{M^{p',p',\fy,\fy}_{\al,0,0}(c_1)}
    = &
    \sup_{l,y}\|\{\|\Box_{k,l}^{\a}K_{\wt{A}}(x,y)\|_{L^{p'}_x}\}_k\|_{l^{p'}}
    \\
    = &
    \sup_{l,y}\|\{\|\overline{\Box_{-l,-k}^{\a}K_{A}(y,x)}\|_{L^{p'}_x}\}_k\|_{l^{p'}}
    \\
    = &
    \sup_{l,y}\|\{\|\Box_{l,k}^{\a}K_{A}(y,x)\|_{L^{p'}_x}\}_k\|_{l^{p'}}
    =\|K_A\|_{M^{p',p',\fy,\fy}_{\al,0,0}(c_2)}.
\end{split}
\ee
\end{proof}

To deal with the compactness case, we introduce the subspace of $M^{q,q,\fy,\fy}_{\al,0,\al d(1/p-1)}(c_1)$.

\begin{definition}
    Let $q\in (0,\fy)$. 
    The function space $\tilde{M}^{q,q,\fy,\fy}_{\al,0,\al d(1/p-1)}(c_1)$ consists of all the elements in $M^{q,q,\fy,\fy}_{\al,0,\al d(1/p-1)}(c_1)$ such that
\be
	\lim\limits_{N\rightarrow \fy}\sup\limits_{k\in \zd,y\in \rd}\lan k\ran^{\frac{\al d}{1-\al}(1/p-1)}
	\big\|\big\{\lan l\ran^{\frac{-\al d}{(1-\al)q}}\Box_{l,k}K_A(\b_ll',y)\big\}_{(l, l')\in \zdd\bs [-N,N]^{2d}}\big\|_{l^q(\zdd)}=0.
	\ee
\end{definition}

\begin{proof}[Proof of Theorem \ref{thm-sCKa}]
    As in the proof of Theorem \ref{thm-CKa}, the relation $(1)\Longrightarrow (2)$ follows by $A\in \calK(\mpad,\mqad)$
	and the fact that
	\be
	\{\lan k\ran^{\frac{\al d}{1-\al}(1/p-1)}T_y\check{\eta_k^{\al}}\}_{k\in \zd, y\in \rd}
	\ee
	is a bounded subset of $\mpad$. 

    Next, we turn to the proof of $(2)\Longrightarrow (1)$.
    Denote by $\{\widetilde{\va_k^{\al}}\}_{k\in \zd}$ the function sequence defined in the proof of Theorem \ref{thm-sBKa}.
    If the statement (2) holds, 
    we conclude that 
    \be
    \{\lan k\ran^{\frac{\al d}{1-\al}(1/p-1)}A(T_{y} \check{\widetilde{\va_k^{\al}}}): k\in \zd, y\in \rd\}
    \ee
	is also a totally bounded subset of $M^{q}_{\al}(\rd)$.
    From this and Theorem \ref{thm-CKa}, we obtain the desired conclusion in the statement (1).

    Finally, the relation $(2)\Longleftrightarrow (3)$ follows by Theorem \ref{thm-tbsa} and the definition of $\tilde{M}^{q,q,\fy,\fy}_{\al,0,\al d(1/p-1)}(c_1)$.
\end{proof}    

\begin{remark}
    As a direct conclusion of the fact that $\calK(M^p_{\al}(\rd), M^q_{\al}(\rd))$ is a closed subspace of 
    $\calL(M^p_{\al}(\rd), M^q_{\al}(\rd))$,
    the function space $\tilde{M}^{q,q,\fy,\fy}_{\al,0,\al d(1/p-1)}(c_1)$
    is also a closed subspace of $M^{q,q,\fy,\fy}_{\al,0,\al d(1/p-1)}(c_1)$.
\end{remark}

\subsection{The case of modulation space}
Recall that the space $\mpad$ coincides with the modulation space $\mpd$ when $\al=0$.
Thus, the kernel theorems for modulation space can be automatically implied by our general conclusions
for $\al$-modulation space.
Here, we would like to give an independent proof for the modulation case, by using the Gabor characterization. 

For $z=(x,\xi)\in \rdd$, modulation operator $M_{\xi}$ and time-frequency shift $\pi(z)$ are defined, respectively, by
\be
 M_{\xi}f(t)=e^{2\pi it\cdot \xi}f(t),\ \ \ \  \pi(z)f(t)=M_{\xi}T_x  f(t)=e^{2\pi it\cdot\xi}f(t-x).
\ee
For a fixed window function $g\in \calS(\rd)\bs\{0\}$, there exists a lattice $\La=\zdd/2^N$ with a large constant $N\in \bbN$, such that
$\calG(g,\La)=\{\pi(\la)g:\ \la\in \La\}$ is a frame on $L^2(\rd)$ with the canonical dual frame $\calG(\g,\La)$.
We recall the Gabor characterization of modulation space $\mpd$, 
see \cite[Corollary 12.2.6]{GrochenigBook2013} and \cite[Theorem 3.7]{GalperinSamarah2004ACHA}.

\begin{proposition}[Gabor characterization of modulation spaces]\label{pp-Gabor-mod}
    Suppose that $p\in (0,\fy]$.
    Let $g\in \calS(\rd)$ such that $\calG(g,\La)$ is a frame on $L^2(\rd)$ with the canonical dual frame $\calG(\g,\La)$.
    For $f\in \calS'(\rd)$, the following two statements are equivalent
    \bn
     \item 
     $f\in \mpd$;
     \item
     $\{\lan f, \pi(\la)\g\ran\}_{\la\in \La}\in l^p$;
     \item
     $\{\lan f, \pi(\la)g\ran\}_{\la\in \La}\in l^p$.
    \en
    Moreover, if one of the above statements holds, we have the norm estimate
    \be
    \|f\|_{\mpd}
    \sim \|\{\lan f, \pi(\la)\g\ran\}_{\la\in \La}\|_{l^p}
    \sim \|\{\lan f, \pi(\la)g\ran\}_{\la\in \La}\|_{l^p},
    \ee 
    and the Gabor expansion
      \be
  f=\sum_{\la \in \La}\langle f, \pi(\la)\g\rangle \pi(\la)g
   =\sum_{\la \in \La}\langle f, \pi(\la)g\rangle \pi(\la)\g
  \ee
  with unconditional convergence in $\mpd$ if $p<\fy$, and weak-star convergence in $M^{\fy}$ otherwise.
\end{proposition}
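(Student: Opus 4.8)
The plan is to deduce all three equivalences and the norm comparisons from the mapping properties of the analysis and synthesis operators attached to the Gabor system $\calG(g,\La)$, transporting the $L^2(\rd)$ frame identity to the whole (quasi-)Banach scale $0<p\leq \fy$. Throughout I write $V_g f(z)=\lan f,\pi(z)g\ran$ for the STFT and use the window-independence $\|f\|_{\mpd}\sim \|V_g f\|_{L^p(\rdd)}$, valid for any fixed nonzero Schwartz window. The one structural input I would secure at the outset is that the canonical dual window $\g=S^{-1}g$, with $Sf=\sum_{\la\in\La}\lan f,\pi(\la)g\ran\pi(\la)g$ the frame operator, again belongs to $\calS(\rd)$. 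Since $S$ commutes with the time-frequency shifts $\pi(\la)$, $\la\in\La$, this also yields $\pi(\la)\g=S^{-1}\pi(\la)g$, so that $\calG(\g,\La)$ is exactly the canonical dual frame. The Schwartz-regularity of $\g$ is a known inverse-closedness fact for Gabor frame operators with Schwartz windows, and I would invoke it as such. Once it is available, $g$ and $\g$ enter symmetrically, and it suffices to prove $(1)\Leftrightarrow(2)$: the equivalence $(1)\Leftrightarrow(3)$ follows verbatim after interchanging $g$ and $\g$.

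The core of the proof consists of two boundedness statements. First, the synthesis operator $D_g c=\sum_{\la\in\La}c_\la\pi(\la)g$ maps $l^p$ into $\mpd$. To prove this I would compute $V_g(D_gc)(z)=\sum_{\la\in\La}c_\la\,\omega(\la,z)\,V_gg(z-\la)$ with a unimodular factor $\omega$, so that $|V_g(D_gc)(z)|\leq \sum_{\la\in\La}|c_\la|\,|V_gg(z-\la)|$, and then exploit the rapid decay of $V_gg$ coming from $g\in\calS(\rd)$. Sampling $z$ over a unit grid and applying the convolution embedding $l^p\ast l^{p\wedge 1}\subset l^p$ (the same Young-type inequality used in Lemma \ref{lm-Fourierexp-standard}) yields $\|D_gc\|_{\mpd}\lesssim \|c\|_{l^p}$. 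Second, the analysis operator $C_\g f=\{\lan f,\pi(\la)\g\ran\}_{\la\in\La}=\{V_\g f(\la)\}_{\la\in\La}$ maps $\mpd$ into $l^p$: since $\g\in\calS(\rd)$, the reproducing relation for the STFT gives the pointwise bound $|V_\g f(z)|\lesssim (|V_\g f|\ast G)(z)$ with a rapidly decaying $G$, so that the lattice $l^p$-sum of $V_\g f$ is controlled by $\|V_\g f\|_{L^p(\rdd)}\sim\|f\|_{\mpd}$.

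With $D_g$ and $C_\g$ bounded, the equivalence and the norm relations follow from the reconstruction identity $D_gC_\g=\mathrm{Id}$, which holds on $L^2(\rd)$ by the definition of the canonical dual frame. For $p<\fy$ I would extend it to all of $\mpd$ using the density of $\calS(\rd)$ together with the continuity of both operators; the Gabor expansion $f=\sum_{\la\in\La}\lan f,\pi(\la)\g\ran\pi(\la)g$ then converges unconditionally in $\mpd$ because its coefficient sequence lies in $l^p$ and the tails are controlled by $\|D_g\|$. For $p=\fy$ the same identity is recovered by a weak-star argument: one tests against $\calS(\rd)$, for which the finite partial sums are uniformly bounded, and passes to the limit. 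Exchanging the roles of $g$ and $\g$ reproduces the second expansion and the equivalence with $(3)$.

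The step I expect to be the main obstacle is treating the quasi-Banach range $0<p<1$ on the same footing as $p\geq 1$. There the triangle inequality must be replaced by its $p$-subadditive form, the convolution and sampling inequalities require smoothness thresholds of the form $N>d/(p\wedge 1)$ exactly as in Lemma \ref{lm-Fourierexp-standard}, and the window-independence $\|V_\g f\|_{L^p}\sim\|f\|_{\mpd}$ together with the density and weak-star extension of $D_gC_\g=\mathrm{Id}$ must be argued with care. What makes every one of these estimates uniform in $p$ is precisely the Schwartz decay of the dual window $\g$; verifying, or citing as in \cite{GrochenigBook2013,GalperinSamarah2004ACHA}, that single fact is the linchpin of the whole argument.
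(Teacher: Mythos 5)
Your proposal is mathematically sound, but note that the paper does not prove this proposition at all: it is explicitly \emph{recalled} from the literature, with the Banach range $1\le p\le\fy$ attributed to \cite[Corollary 12.2.6]{GrochenigBook2013} and the quasi-Banach range $0<p<1$ to \cite[Theorem 3.7]{GalperinSamarah2004ACHA}. What you have written is, in effect, a reconstruction of the proof contained in those references: the boundedness of the synthesis operator $l^p\to M^p$ and of the analysis operator $M^p\to l^p$ via the pointwise domination of the STFT by a convolution with a rapidly decaying kernel, the extension of the $L^2$ reconstruction identity $D_gC_\g=\mathrm{Id}$ by density (for $p<\fy$) and by weak-star testing against $\calS(\rd)$ (for $p=\fy$), and the symmetry between $g$ and $\g$ once one knows $\g=S^{-1}g\in\calS(\rd)$. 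Your identification of the two genuinely nontrivial inputs is accurate: the Schwartz regularity of the canonical dual window (Janssen's theorem, which neither you nor the paper proves, and which legitimately can be cited), and the fact that all convolution, sampling, and density arguments survive in the range $0<p<1$ with the $p$-subadditive triangle inequality and thresholds of the form $N>d/(p\wedge 1)$ --- this last point is precisely the content of the Galperin--Samarah paper the authors cite. So your route and the paper's "route" are not in conflict; the paper simply outsources the entire argument, while you have made its skeleton explicit. The one place where your sketch is looser than what a full write-up would require is the extension of $D_gC_\g=\mathrm{Id}$ from $\calS(\rd)$ to $M^p(\rd)$ for $p<1$: there you need density of $\calS(\rd)$ in $M^p(\rd)$ (true, but itself a quasi-Banach fact requiring proof) and you should check that the $M^p$-limit and the $L^2$-limit of the partial sums agree as tempered distributions; both are routine but worth stating if this were to replace the citation.
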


The short-time Fourier transform (STFT) of $f\in \calS'$ with respect to a window $g\in \calS\bs\{0\}$ is defined by
\be
V_gf(x,\xi):=\lan f,\pi(z)g\ran_{\calS',\calS}.
\ee
We recall the equivalent norm of modulation space
\be
\|f\|_{\mpd}\sim \|V_gf\|_{L^p(\rdd)}.
\ee
See \cite{Triebel1983ZFA} for more details for the equivalent norm of modulation space.

\begin{theorem}[Boundedness kernel theorem for modulation spaces]\label{thm-BK}
Let $p,q\in (0,\fy]$ satisfying $p\leq q\wedge 1$.
Let $g\in \calS(\rd)$ such that $\calG(g,\La)$ is a frame on $L^2(\rd)$ with the canonical dual frame $\calG(\g,\La)$.
	Let $A\in \calL(\calS(\rd), \calS'(\rd))$ be a linear operator with
	distributional kernel $K_A$.	
	The following statements are equivalent.
	\bn
	\item
	$A\in\calL(M^p(\rd), M^q(\rd))$;
	\item
	$\{A(\pi(z) g): z \in \rdd\}$
	is a bounded subset of $M^{q}(\rd)$;
	\item
	$\{A(\pi(\la) g): \la \in \La\}$
	is a bounded subset of $M^{q}(\rd)$;
	\item 
	$\sup_{z\in \rdd}\big\|V_{g\otimes \bar{g}}K_A(z_1',z_1,z_2',-z_2)\big\|_{L^q_{z'}(\rdd)}<\fy$;
 	\item 
	$\sup_{\la\in \La}\big\|V_{g\otimes \bar{g}}K_A(\mu_1,\la_1,\mu_2,-\la_2)\big\|_{l^q_{\mu}(\La)}<\fy$.
	\en
	Moreover, if one of the above statements holds, we have the norm estimate 
	\be
    \begin{split}
        &\|A\|_{\calL(M^p(\rd), M^q(\rd))}
		\sim 
		\sup_{z\in \rdd}\|A(\pi(z)g)\|_{\mqd}
        \sim
        \sup_{\la\in \La}\|A(\pi(\la)g)\|_{\mqd}
		\\
		\sim &
		\sup_{z\in \rdd}\big\|V_{g\otimes \bar{g}}K_A(z_1',z_1,z_2',-z_2)\big\|_{L^q_{z'}(\rdd)}
  \sim 
  \sup_{\la\in \La}\big\|V_{g\otimes \bar{g}}K_A(\mu_1,\la_1,\mu_2,-\la_2)\big\|_{L^q_{\mu}(\La)}.
    \end{split}
	\ee
\end{theorem}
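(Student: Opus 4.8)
The plan is to run the same three-step scheme as in Theorem~\ref{thm-BKa} --- expand an element of the starting space into atoms, transfer the action of $A$ onto the atoms, and read the $M^q$-norm of each $A(\text{atom})$ off the kernel --- but now with the Gabor atoms $\pi(\la)g$ (and their continuous counterpart $\pi(z)g$) playing the role of the $\al$-covering atoms, using the Gabor characterization of Proposition~\ref{pp-Gabor-mod} in place of Proposition~\ref{pp-Fourierexp-alpha}. Concretely I would prove the cycle $(1)\Rightarrow(2)\Rightarrow(3)\Rightarrow(1)$ together with the side equivalences $(2)\Leftrightarrow(4)$ and $(3)\Leftrightarrow(5)$, harvesting the displayed norm equivalences from the same estimates.

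For $(1)\Rightarrow(2)$ I use that the modulation norm is invariant under time-frequency shifts, so $\|\pi(z)g\|_{M^p}=\|g\|_{M^p}$ is constant in $z$ and $\{\pi(z)g\}_{z\in\rdd}\subset\calS$ is a bounded subset of $M^p$; the bound $\|Af\|_{M^q}\lesssim\|f\|_{M^p}$ then gives $\sup_z\|A(\pi(z)g)\|_{M^q}\lesssim\|g\|_{M^p}\|A\|_{\calL(M^p,M^q)}$. The step $(2)\Rightarrow(3)$ is trivial since $\La\subset\rdd$. For the main implication $(3)\Rightarrow(1)$, note that for $f\in\calS$ the coefficients $\langle f,\pi(\la)\g\rangle$ are samples of $V_\g f\in\calS(\rdd)$ and hence rapidly decreasing, so the Gabor expansion $f=\sum_{\la}\langle f,\pi(\la)\g\rangle\pi(\la)g$ converges in $\calS$; since $A\in\calL(\calS,\calS')$ we may apply $A$ termwise to get $Af=\sum_{\la}\langle f,\pi(\la)\g\rangle A(\pi(\la)g)$ in $\calS'$. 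Taking the $M^q$-norm, invoking the $l^{q\wedge1}$ quasi-triangle inequality and the hypothesis $C:=\sup_\la\|A(\pi(\la)g)\|_{M^q}<\fy$, yields $\|Af\|_{M^q}\lesssim C\|\{\langle f,\pi(\la)\g\rangle\}_\la\|_{l^{q\wedge1}}$; the assumption $p\le q\wedge1$ gives the embedding $l^p\hookrightarrow l^{q\wedge1}$, and Proposition~\ref{pp-Gabor-mod} identifies $\|\{\langle f,\pi(\la)\g\rangle\}_\la\|_{l^p}\sim\|f\|_{M^p}$, so $\|Af\|_{M^q}\lesssim C\|f\|_{M^p}$ on $\calS$ and $A$ extends to $\calL(M^p,M^q)$ by density.

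The kernel descriptions $(2)\Leftrightarrow(4)$ and $(3)\Leftrightarrow(5)$ both rest on the single identity
$$\langle A(\pi(z)g),\pi(z')g\rangle=\langle K_A,\pi(z')g\otimes\overline{\pi(z)g}\rangle=V_{g\otimes\bar g}K_A(z_1',z_1,z_2',-z_2),$$
which I obtain by unwinding the kernel relation \eqref{intro-1} and rewriting the tensor window $\pi(z')g\otimes\overline{\pi(z)g}$ as a single time-frequency shift of $g\otimes\bar g$ on $\rdd$. Combining this with the STFT norm equivalence $\|A(\pi(z)g)\|_{M^q}\sim\|V_g(A\pi(z)g)\|_{L^q}$ gives $(2)\Leftrightarrow(4)$ and its norm equivalence, while feeding it into the discrete Gabor characterization of Proposition~\ref{pp-Gabor-mod} (sampling the test variable over $\mu\in\La$, so that $z'\mapsto\mu$ and $z\mapsto\la$) gives $(3)\Leftrightarrow(5)$.

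I expect the main obstacle to be exactly the bookkeeping in this displayed identity: one must correctly match $\pi(z')g\otimes\overline{\pi(z)g}$ with a single $\pi(\cdot)(g\otimes\bar g)$ on $\rdd$, keeping straight which of the four coordinates is a spatial shift ($z_1',z_1$) and which is a frequency shift ($z_2',-z_2$), and in particular locating the sign reversal $z_2\mapsto-z_2$ produced by the complex conjugation of the input window. I would carry out this one computation explicitly; the remaining assertions, including the passage between the continuous index $z\in\rdd$ and the lattice index $\la\in\La$, then follow from Proposition~\ref{pp-Gabor-mod} as above.
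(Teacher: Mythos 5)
Your proposal is correct and follows essentially the same route as the paper's own proof: the cycle $(1)\Rightarrow(2)\Rightarrow(3)\Rightarrow(1)$ via shift-invariance of the $M^p$-norm, the Gabor expansion from Proposition~\ref{pp-Gabor-mod} combined with the $l^{q\wedge 1}$ quasi-triangle inequality and the embedding $l^p\hookrightarrow l^{q\wedge 1}$, and the side equivalences $(2)\Leftrightarrow(4)$, $(3)\Leftrightarrow(5)$ through the identity $V_g(A(\pi(z)g))(z')=\langle K_A,\pi(z')g\otimes\overline{\pi(z)g}\rangle=V_{g\otimes\bar g}K_A(z_1',z_1,z_2',-z_2)$. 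The bookkeeping you flag (the sign reversal $z_2\mapsto -z_2$ from conjugating the input window) is exactly the computation the paper records, so no gap remains.
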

\begin{proof}
	The relation $(1)\Longrightarrow(2)\Longrightarrow(3)$ follows directly by the fact that
 \be
     \|\pi(z)g\|_{M^p}
     \sim \|V_{\phi}(\pi(z)g)\|_{L^p}
     = \|T_zV_{\phi}g\|_{L^p}
     = \|V_{\phi}g\|_{L^p}
     \sim \|g\|_{M^p}.
 \ee
The equivalent relations $(2)\Longleftrightarrow (4)$ and $(3)\Longleftrightarrow (5)$ follow by the facts
\be
V_g(A(\pi(z)g))(z')
=
\lan A\pi(z)g,\pi(z')g\ran=\lan K_A, \pi(z')g\otimes \overline{\pi(z)g}\ran=V_{g\otimes \bar{g}}K_A(z_1',z_1,z_2',-z_2)
\ee
and
\be
\|A(\pi(z)g)\|_{\mqd}
\sim \|V_g(A(\pi(z)g))\|_{L^q(\rdd)}.
\ee
In order to complete this proof, we only need to verify the relation $(3)\Longrightarrow (1)$.
  For $f\in\calS(\rd)$, we have the expansion
\begin{equation*}
    f=\sum_{\la \in \La}\langle f, \pi(\la)\g\rangle \pi(\la)g
\end{equation*}
with unconditional convergence in $\calS(\rd)$.
 From this and the fact that $A\in \calL(\calS(\rd), \calS'(\rd))$, we write
 \be
  Af=\sum_{\la \in \La}\langle f, \pi(\la)\g\rangle A(\pi(\la)g),
 \ee
 with unconditional convergence in $\calS'(\rd)$.
 By Proposition \ref{pp-Gabor-mod}, the desired conclusion follows by
 \be
\begin{split}
    \|Af\|_{\mqd}
    \leq &
    \big(\sum_{\la \in \La}|\langle f, \pi(\la)\g\rangle|^{q\wedge 1} \|A(\pi(\la)g)\|_{\mqd}^{q\wedge 1}\big)^{\frac{1}{q\wedge 1}}
    \\
    \leq &
    \sup_{\la\in \La}\|A(\pi(\la)g)\|_{\mqd}\big(\sum_{\la \in \La}|\langle f, \pi(\la)\g\rangle|^{q\wedge 1}\big)^{\frac{1}{q\wedge 1}}
    \\
    \leq &
    \sup_{\la\in \La}\|A(\pi(\la)g)\|_{\mqd}\big(\sum_{\la\in \La}|\lan f, \pi(\la)\g\ran|^p\big)^{1/p}
    \\
    \sim &
    \sup_{\la\in \La}\|A(\pi(\la)g)\|_{\mqd}\|f\|_{\mpd}.
\end{split}
 \ee
\end{proof}
By using a duality argument, we obtain the following conclusion.
\begin{theorem}\label{thm-BK-dual}
	Suppose that $p\in [1, \i]$.
    Let $g\in \calS(\rd)$ such that $\calG(g,\La)$ is a frame on $L^2(\rd)$ with the canonical dual frame $\calG(\g,\La)$.
	Let $A\in \calL(\calS(\rd), \calS'(\rd))$ be a linear operator with
	distributional kernel $K_A\in \calS'(\rdd)$.	
	The following statements are equivalent.
	\bn
	\item
	$A\in\calL(M^p(\rd), M^{\i}(\rd))$;
		\item 
	$\sup_{z\in \rdd}\big\|V_{g\otimes \bar{g}}K_A(z_1,z_1',z_2,-z_2')\big\|_{L^{p'}_{z'}(\rdd)}<\fy$;
 	\item 
	$\sup_{\la\in \La}\big\|V_{g\otimes \bar{g}}K_A(\la_1,\mu_1,\la_2,-\mu_2)\big\|_{l^{p'}_{\mu}(\La)}<\fy$.
	\en
	Moreover, if one of the above statements holds, we have the norm estimate 
	\be
 \begin{split}
     \|A\|_{\calL(M^p(\rd), M^{\i}(\rd))}
		\sim &
		\sup_{z\in \rdd}\big\|V_{g\otimes \bar{g}}K_A(z_1,z_1',z_2,-z_2')\big\|_{L^{p'}_{z'}(\rdd)}
  \\
  \sim &
  \sup_{\la\in \La}\big\|V_{g\otimes \bar{g}}K_A(\la_1,\mu_1,\la_2,-\mu_2)\big\|_{l^{p'}_{\mu}(\La)}.
 \end{split}
	\ee
\end{theorem}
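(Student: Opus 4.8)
The plan is to deduce Theorem \ref{thm-BK-dual} from the primal kernel theorem (Theorem \ref{thm-BK}) by the same duality device that produced Theorem \ref{thm-BKa-dual} from Theorem \ref{thm-BKa}. First I would introduce the formal adjoint $\wt{A}\in\calL(\calS(\rd),\calS'(\rd))$ defined by $\lan\wt{A}g,f\ran:=\overline{\lan Af,g\ran}$ for all $f,g\in\calS(\rd)$. Applying Proposition \ref{pp-dual-bd} in the case $\al=0$ to the exponent pair $(p,\fy)$, the assertion $A\in\calL(M^p(\rd),M^{\fy}(\rd))$ becomes equivalent to $\wt{A}\in\calL(M^1(\rd),M^{p'}(\rd))$, accompanied by the norm equivalence $\|A\|_{\calL(M^p,M^{\fy})}\sim\|\wt{A}\|_{\calL(M^1,M^{p'})}$. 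The point to record here is that the new exponent pair $(1,p')$ is admissible for Theorem \ref{thm-BK}: the required inequality $1\leq p'\wedge 1$ reads $1\leq 1$ and holds for every $p\in[1,\fy]$, so Theorem \ref{thm-BK} applies to $\wt{A}$ with source exponent $1$ and target exponent $p'$.

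Next I would invoke Theorem \ref{thm-BK} for $\wt{A}$, which converts $\wt{A}\in\calL(M^1,M^{p'})$ into the two equivalent conditions
\be
\sup_{w\in\rdd}\big\|V_{g\otimes\bar{g}}K_{\wt{A}}(w_1',w_1,w_2',-w_2)\big\|_{L^{p'}_{w'}(\rdd)}<\fy
\ee
and its discrete analogue over the lattice $\La$, both with norm comparable to $\|\wt{A}\|_{\calL(M^1,M^{p'})}$. It then remains only to rephrase these conditions in terms of $K_A$ instead of $K_{\wt{A}}$.

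The heart of the argument, and the step I expect to be most delicate, is the identification of $V_{g\otimes\bar{g}}K_{\wt{A}}$ with $V_{g\otimes\bar{g}}K_A$ up to a permutation of the four variable blocks and a conjugation. I would use the identity $V_{g\otimes\bar{g}}K_B(z_1',z_1,z_2',-z_2)=\lan B(\pi(z)g),\pi(z')g\ran$ recorded inside the proof of Theorem \ref{thm-BK} (valid for any $B\in\calL(\calS,\calS')$), combined with the defining relation $\lan\wt{A}(\pi(w)g),\pi(w')g\ran=\overline{\lan A(\pi(w')g),\pi(w)g\ran}$, to obtain
\be
V_{g\otimes\bar{g}}K_{\wt{A}}(w_1',w_1,w_2',-w_2)=\overline{V_{g\otimes\bar{g}}K_A(w_1,w_1',w_2,-w_2')}.
\ee
Taking absolute values kills the conjugation, and relabelling the fixed variable $w$ as $z$ (resp. $\la$) and the free variable $w'$ as $z'$ (resp. $\mu$) turns the two conditions above into exactly statements (2) and (3) of Theorem \ref{thm-BK-dual}. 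The norm estimate then follows by chaining $\|A\|_{\calL(M^p,M^{\fy})}\sim\|\wt{A}\|_{\calL(M^1,M^{p'})}$ with the norm equivalences that Theorem \ref{thm-BK} furnishes for $\wt{A}$. The only genuine difficulty is the bookkeeping of the four coordinate slots and the sign in the last frequency entry; once the phase-space symmetry $\pi(z')g\otimes\overline{\pi(z)g}=\pi\big((z_1',z_1),(z_2',-z_2)\big)(g\otimes\bar{g})$ is noted, the rest is routine.
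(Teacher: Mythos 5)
Your proposal is correct and is essentially the argument the paper intends: the paper states Theorem \ref{thm-BK-dual} with only the remark ``by using a duality argument,'' and your proof instantiates exactly that argument in the same way the paper proves Theorems \ref{thm-BKa-dual} and \ref{thm-sBKa-dual} (define $\wt{A}$, apply Proposition \ref{pp-dual-bd} with $\al=0$ to reduce to $\wt{A}\in\calL(M^1,M^{p'})$, apply the primal kernel theorem to $\wt{A}$, and translate $K_{\wt{A}}$ back to $K_A$ via the permutation-and-conjugation identity). Your verification that the exponent pair $(1,p')$ satisfies the hypothesis $1\leq p'\wedge 1$ of Theorem \ref{thm-BK}, and the identity $V_{g\otimes\bar{g}}K_{\wt{A}}(w_1',w_1,w_2',-w_2)=\overline{V_{g\otimes\bar{g}}K_A(w_1,w_1',w_2,-w_2')}$, are both accurate, so no gaps remain.
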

\begin{remark}
	The (quasi-)norms of $(4),(5)$ in Theorem \ref{thm-BK} and $(2),(3)$ in Theorem \ref{thm-BK-dual} 
 are actually that of the so-called mixed modulation spaces in \cite{CorderoNicola2019JFAA}.
 Our theorems generalized the main conclusions in \cite{CorderoNicola2019JFAA}.
\end{remark}

Next, we consider the compactness kernel theorem of modulation space. 
With the help of Gabor characterization, we give the following characterization of totally boundedness subset in modulation space.
\begin{theorem}\label{thm-tbs}
Let $g\in \calS(\rd)$ such that $\calG(g,\La)$ is a frame on $L^2(\rd)$ with the canonical dual frame $\calG(\g,\La)$.
	Let $p\in (0,\fy)$, and $\calF$ be a subset of $\mpd$.
	The following three statements are equivalent: 
	\bn
	\item $\calF$ is totally bounded;
	\item $\calF$ satisfies the properties of uniformly boundedness and disappearance:
	 \bn
	 \item Uniformly boundedness: 
	 \ben\label{thm-tbs-cd1}
	 \sup_{f\in \calF}\|f\|_{\mpd}<\fy;
	 \een
	 \item Uniformly disapperance: 
	 \ben\label{thm-tbs-cd2}
	 \lim_{N\rightarrow \fy}\sup_{f\in \calF}\big\|V_gf(\la)\big\|_{l^p(\La\bs [-N,N]^{2d})}=0;
	 \een
	 \en
	\item $\wt{\calF}:=\Big\{\big\{V_gf(\la)\big\}_{\la\in \La}: f\in \calF\Big\}$ is totally bounded in $l^p(\La)$.
	\en
\end{theorem}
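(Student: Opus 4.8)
The plan is to mirror the proof of Theorem \ref{thm-tbsa}, replacing the role played there by the $\alpha$-decomposition expansion of Proposition \ref{pp-exp-alpha} with the Gabor expansion and norm equivalence recorded in Proposition \ref{pp-Gabor-mod}. Throughout I would work with the expansion
\be
f=\sum_{\la\in\La}V_gf(\la)\,\pi(\la)\g,\qquad \|f\|_{\mpd}\sim\|\{V_gf(\la)\}_{\la\in\La}\|_{l^p(\La)},
\ee
so that the analysis coefficients $V_gf(\la)$ appearing in the synthesis are exactly the quantities controlled by the uniform disappearance condition \eqref{thm-tbs-cd2}. I would prove the cycle $(1)\Rightarrow(2)$, $(2)\Rightarrow(1)$ and $(2)\Leftrightarrow(3)$. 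The main tools are the (quasi-)norm equivalence above, the boundedness of the Gabor synthesis operator $\{c_\la\}\mapsto\sum_\la c_\la\pi(\la)\g$ from $l^p(\La)$ into $\mpd$ (which underlies the $M^p$-convergence asserted in Proposition \ref{pp-Gabor-mod}), together with Proposition \ref{pp-tbs} and the $l^p$ criterion of Lemma \ref{lm-lp-tot.bdd}.

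For $(1)\Rightarrow(2)$, statement $(a)$ is immediate since a totally bounded set is bounded and $\|f\|_{\mpd}\sim\|\{V_gf(\la)\}\|_{l^p}$. For $(b)$, I would first note that for each fixed $f$ one has $\|V_gf(\la)\|_{l^p(\La\setminus[-N,N]^{2d})}\to0$ as $N\to\fy$, because $\{V_gf(\la)\}\in l^p$. Choosing a finite $\ep$-net $\{h_j\}$ of $\calF$ and using the (quasi-)triangle inequality together with $\|V_g(f-h_j)\|_{l^p}\lesssim\|f-h_j\|_{\mpd}<\ep$, the estimate
\be
\|V_gf(\la)\|_{l^p(\La\setminus[-N,N]^{2d})}\lesssim\|V_gh_j(\la)\|_{l^p(\La\setminus[-N,N]^{2d})}+\ep
\ee
upgrades this pointwise decay to the uniform decay \eqref{thm-tbs-cd2}.

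For $(2)\Rightarrow(1)$, I would set
\be
\calF_N=\Big\{\sum_{\la\in\La\cap[-N,N]^{2d}}V_gf(\la)\,\pi(\la)\g:\ f\in\calF\Big\}.
\ee
The finite family $\{\pi(\la)\g:\la\in\La\cap[-N,N]^{2d}\}$ is totally bounded in $\mpd$ (being finite), and by $(a)$ together with the norm equivalence the truncated coefficient sequences are uniformly bounded in $l^p$, hence---on the fixed finite index set---uniformly bounded in $l^{p\wedge1}$; Proposition \ref{pp-tbs} then shows $\calF_N$ is totally bounded. By synthesis boundedness applied to the tail,
\be
\sup_{f\in\calF}\Big\|f-\sum_{\la\in\La\cap[-N,N]^{2d}}V_gf(\la)\pi(\la)\g\Big\|_{\mpd}
\lesssim\sup_{f\in\calF}\|V_gf(\la)\|_{l^p(\La\setminus[-N,N]^{2d})},
\ee
which tends to $0$ as $N\to\fy$ by $(b)$. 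Thus $\calF$ lies within arbitrarily small distance of a totally bounded set, so $\calF$ itself is totally bounded. Finally, $(2)\Leftrightarrow(3)$ is the translation to the sequence side: by $\|f\|_{\mpd}\sim\|\{V_gf(\la)\}\|_{l^p}$, condition $(a)$ is equivalent to the uniform boundedness \eqref{lm-lp-u.bdd} of $\wt{\calF}$ in $l^p(\La)$, while $(b)$ is literally the uniform disappearance \eqref{lm-lp-u.disa} of $\wt{\calF}$, so Lemma \ref{lm-lp-tot.bdd} gives the equivalence with total boundedness of $\wt{\calF}$. The only genuinely technical point is the uniform tail estimate in $(2)\Rightarrow(1)$, namely controlling the $\mpd$-norm of the Gabor tail by its $l^p$ coefficient tail uniformly over $f\in\calF$; this is precisely the boundedness of the synthesis operator restricted to subsets of $\La$, which I would invoke from the Gabor frame theory underlying Proposition \ref{pp-Gabor-mod}.
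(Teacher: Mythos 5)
Your proposal is correct and takes essentially the same approach as the paper: the paper proves $(1)\Rightarrow(2)$ with the same $\ep$-net estimate, handles $(2)\Rightarrow(1)$ by precisely the truncation argument you describe (it simply defers to the proof of Theorem \ref{thm-tbsa}, whose Gabor analogue you have written out, including Proposition \ref{pp-tbs} for the truncated sets $\calF_N$ and the synthesis-type tail bound), and gets $(2)\Leftrightarrow(3)$ from Lemma \ref{lm-lp-tot.bdd} together with Proposition \ref{pp-Gabor-mod}. Your explicit flagging of the subset-synthesis bound $\|\sum_{\la\in E}c_\la\pi(\la)\g\|_{\mpd}\lesssim\|\{c_\la\}_{\la\in E}\|_{l^p(\La)}$ as the one genuinely technical ingredient is consistent with the paper, which relies on the same fact implicitly through the Gabor frame theory of \cite{GalperinSamarah2004ACHA} underlying Proposition \ref{pp-Gabor-mod}.
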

\begin{proof}
    	$(1)\Longrightarrow(2):$
    The statement $(a)$ follows directly by the fact that a totally bounded subset is bounded.
	The statement $(b)$ follows by the estimate
 \be
 \begin{split}
     \|V_gf(\la)\|_{l^p(\La\bs [-N,N]^{2d})}
     \lesssim &
     \|V_g(f-h_j)(\la)\|_{l^p(\La\bs [-N,N]^{2d})}+\|V_gh_j(\la)\|_{l^p(\La\bs [-N,N]^{2d})}
     \\
     \lesssim &
     \|f-h_j\|_{\mpd}+\|V_gh_j(\la)\|_{l^p(\La\bs [-N,N]^{2d})},
 \end{split}
 \ee
 where $\{h_j\}_{j=1}^N$ is an $\ep$-net of $\calF$.

 The relation $(2)\Longrightarrow(1)$ follows by the similar argument as in the proof of Theorem \ref{thm-tbsa}, by using the Gabor expansion
 of modulation space. The relation $(2)\Longleftrightarrow(3)$  follows by Proposition \ref{lm-lp-tot.bdd} and \ref{pp-Gabor-mod}.
\end{proof}

\begin{remark}\label{rmk-tbs}
    We point out that a characterization of totally boundedness subset in a family of more general function spaces, the so-called coorbit spaces, was established in \cite{DoerflerFeichtingerGroechenig2002CM}.
    As a special case, they give the corresponding conclusion of modulation space, 
    see \cite[Theorem 5]{DoerflerFeichtingerGroechenig2002CM}.
    By using this conclusion, the property (b) in statement (2) of Theorem \ref{thm-tbs} can be replaced by 
    	 \be
	 \lim_{N\rightarrow \fy}\sup_{f\in \calF}\big\|V_gf(z)\big\|_{L^p(\rdd\bs [-N,N]^{2d})}=0.
	 \ee
\end{remark}

Using Theorem \ref{thm-tbs} and the similar argument as in the proof of Theorem \ref{thm-CKa},
we give the corresponding compactness kernel theorem for modulation spaces.
\begin{theorem}[Compactness kernel theorem for modulation spaces]\label{thm-CK}
Let $g\in \calS(\rd)$ such that $\calG(g,\La)$ is a frame on $L^2(\rd)$ with the canonical dual frame $\calG(\g,\La)$.
	Suppose that $p,q\in (0,\fy)$ satisfying $p\leq q\wedge 1$.
	Let $A\in \calL(\calS(\rd), \calS'(\rd))$ be a linear operator with
	distributional kernel $K_A$.	
	The following statements are equivalent.
	\bn
	\item
	$A\in\calK(M^p(\rd), M^q(\rd))$;
	\item
	$\{A(\pi(z) g): z \in \rdd\}$
	is a totally bounded subset of $M^{q}(\rd)$;
	\item
	$\{A(\pi(\la) g): \la \in \La\}$
	is a totally bounded subset of $M^{q}(\rd)$;
	\item 
 The following two properties are valid:
	
	(a) Uniformly boundedness:
	\be
	\sup_{z\in \rdd}\big\|V_{g\otimes \bar{g}}K_A(z_1',z_1,z_2',-z_2)\big\|_{L^q_{z'}(\rdd)}<\fy,
	\ee
	(b) Uniformly disappearance:
	\be
	\lim_{N\rightarrow \fy}\sup_{z\in \rdd}\big\|V_{g\otimes \bar{g}}K_A(z_1',z_1,z_2',-z_2)\big\|_{L^q_{z'}(\rdd\bs [-N,N]^{2d})}=0
	\ee
	\item
	The following two properties are valid:
	
	(a) Uniformly boundedness:
	\be
	\sup_{\la\in \La}\big\|V_{g\otimes \bar{g}}K_A(\mu_1,\la_1,\mu_2,-\la_2)\big\|_{l^q_{\mu}(\rdd)}<\fy,
	\ee
	(b) Uniformly disappearance:
	\be
	\lim_{N\rightarrow \fy}\sup_{\la\in \La}\big\|V_{g\otimes \bar{g}}K_A(\mu_1,\la_1,\mu_2,-\la_2)\big\|_{l^q_{\mu}(\La\bs [-N,N]^{2d})}=0
	\ee
	\en
\end{theorem}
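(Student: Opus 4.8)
The plan is to mirror the proof of Theorem \ref{thm-CKa}, replacing the $\alpha$-atom expansion by the Gabor expansion of Proposition \ref{pp-Gabor-mod} and the totally-bounded criterion of Theorem \ref{thm-tbsa} by its modulation-space analogue Theorem \ref{thm-tbs}. First I would dispose of the easy chain $(1)\Rightarrow(2)\Rightarrow(3)$. For $(1)\Rightarrow(2)$, recall from the proof of Theorem \ref{thm-BK} that $\|\pi(z)g\|_{\mpd}\sim\|g\|_{\mpd}$, so $\{\pi(z)g:z\in\rdd\}$ is a bounded subset of $\mpd$; since $A\in\calK(\mpd,\mqd)$ maps bounded sets to totally bounded sets, its image $\{A(\pi(z)g):z\in\rdd\}$ is totally bounded in $\mqd$. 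The implication $(2)\Rightarrow(3)$ is immediate, because $\{A(\pi(\la)g):\la\in\La\}$ is a subset of $\{A(\pi(z)g):z\in\rdd\}$ and any subset of a totally bounded set is totally bounded.

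The core of the argument is $(3)\Rightarrow(1)$. Statement $(3)$ forces $\{A(\pi(\la)g):\la\in\La\}$ to be bounded in $\mqd$, so Theorem \ref{thm-BK} already yields $A\in\calL(\mpd,\mqd)$. Now fix any bounded subset $\calF\subset\mpd$; I would expand each $f\in\calF$ by Proposition \ref{pp-Gabor-mod} as $f=\sum_{\la\in\La}\lan f,\pi(\la)\g\ran\,\pi(\la)g$ with unconditional convergence in $\mpd$, and apply the bounded operator $A$ to obtain $Af=\sum_{\la\in\La}\lan f,\pi(\la)\g\ran\,A(\pi(\la)g)$ with unconditional convergence in $\mqd$. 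By the Gabor norm equivalence the coefficient sequences $\{\lan f,\pi(\la)\g\ran\}_{\la}$ form a bounded subset of $l^p(\La)$, hence of $l^{q\wedge 1}(\La)$, since $p\leq q\wedge 1$ gives $l^p\hookrightarrow l^{q\wedge 1}$. Applying Proposition \ref{pp-tbs} (with the totally bounded family $\{A(\pi(\la)g)\}_\la$ from $(3)$ and this bounded set of coefficients) shows that $\{Af:f\in\calF\}$ is totally bounded in $\mqd$, i.e. $A\in\calK(\mpd,\mqd)$.

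It remains to convert total boundedness of the image sets into the kernel conditions, that is to prove $(3)\Leftrightarrow(5)$ and $(2)\Leftrightarrow(4)$. Here I would use the STFT identity already recorded in the proof of Theorem \ref{thm-BK},
\[
V_g\bigl(A(\pi(z)g)\bigr)(z')=V_{g\otimes\bar g}K_A(z_1',z_1,z_2',-z_2),
\]
together with $\|A(\pi(z)g)\|_{\mqd}\sim\|V_g(A(\pi(z)g))\|_{L^q(\rdd)}$. For $(3)\Leftrightarrow(5)$, apply Theorem \ref{thm-tbs} (with exponent $q$) to the family $\calF=\{A(\pi(\la)g):\la\in\La\}$: its total boundedness is equivalent to the uniform boundedness and uniform disappearance of $\{V_g(A(\pi(\la)g))(\mu)\}_{\mu\in\La}=\{V_{g\otimes\bar g}K_A(\mu_1,\la_1,\mu_2,-\la_2)\}_\mu$ in $l^q(\La)$, which is precisely $(5)(a)$ and $(5)(b)$. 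For the continuous version $(2)\Leftrightarrow(4)$ I would instead invoke the continuous disappearance criterion of Remark \ref{rmk-tbs} (from \cite{DoerflerFeichtingerGroechenig2002CM}), replacing the $l^q(\La)$ condition by the $L^q(\rdd)$ condition, so that total boundedness of $\{A(\pi(z)g):z\in\rdd\}$ becomes exactly $(4)(a)$ and $(4)(b)$.

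The main obstacle I anticipate is the $(3)\Rightarrow(1)$ step, specifically guaranteeing that the estimates are uniform over the whole bounded family $\calF$ rather than for individual $f$; this uniformity is exactly what Proposition \ref{pp-tbs} supplies, so the crux is to verify its hypotheses cleanly, namely that the Gabor coefficients of a norm-bounded family lie in a fixed bounded subset of $l^{q\wedge 1}(\La)$ and that the summand system $\{A(\pi(\la)g)\}_\la$ is the totally bounded set furnished by $(3)$. A secondary technical point is matching the index set $\La\cong\zdd$ with the formulation of Proposition \ref{pp-tbs} over $\zn$, and ensuring the continuous-to-discrete passage needed for $(2)\Leftrightarrow(4)$ is justified through Remark \ref{rmk-tbs}.
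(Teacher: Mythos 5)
Your proposal is correct and follows essentially the same route as the paper's own proof: the chain $(1)\Rightarrow(2)\Rightarrow(3)$ via boundedness of the Gabor atoms, the step $(3)\Rightarrow(1)$ via Theorem \ref{thm-BK}, the Gabor expansion of Proposition \ref{pp-Gabor-mod}, the embedding $l^p\hookrightarrow l^{q\wedge 1}$, and Proposition \ref{pp-tbs}, and finally $(3)\Leftrightarrow(5)$, $(2)\Leftrightarrow(4)$ via Theorem \ref{thm-tbs} and Remark \ref{rmk-tbs} together with the STFT identity for the kernel. The only discrepancy is that the paper cites Theorem \ref{thm-BK-dual} for the boundedness step, which appears to be a typo for Theorem \ref{thm-BK}, the reference you correctly use.
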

\begin{proof}
	The relation $(1)\Longrightarrow(2)\Longrightarrow(3)$ follows by $A\in \calK(\mpad,\mqad)$
	and the fact that both $\{\pi(z)g\}_{z\in \rdd}$ and $\{\pi(\la)g\}_{\la\in \La}$ are bounded subsets of modulation space.
The equivalent relations $(2)\Longleftrightarrow (4)$ and $(3)\Longleftrightarrow (5)$ follow by 
Remark \ref{rmk-tbs} and Theorem \ref{thm-tbs} respectively.
Finally, we turn to the proof of $(3)\Longrightarrow (1)$.
	Let $\calF$ be bounded subset of $\mpd$. 
 From $(3)$, we have $\{A(\pi(\la) g): \la \in \La\}$ is a bounded subset of $M^{q}(\rd)$. 
 Then by Theorem \ref{thm-BK-dual}, we obtain $A\in\calL(\mpd, \mqd)$.
 For $f\in\calF$, we write the Gabor expansion by
	\be
	f=\sum_{\la \in \La}\langle f, \pi(\la)\g\rangle \pi(\la)g
	\ee
	with unconditional convergence in $\mpd$.
	Applying the operator $A$ on both sides of the above expansion, we obtain
		\be
		\begin{split}
			Af
			=\sum_{\la \in \La}\langle f, (\pi(\la)\g\rangle A(\pi(\la)g),
		\end{split}
	\ee
	with unconditional convergence in $\mqd$.
	Note that 
	\be
	\begin{split}
		\big\|\{\langle f, (\pi(\la)\g\rangle\}_{\la\in \La}\big\|_{l^{q\wedge 1}(\La)}
		\lesssim 
		\big\|\{\langle f, (\pi(\la)\g\rangle\}_{\la\in \La}\big\|_{l^{p}(\La)}
		\sim  \|f\|_{\mpd}\leq \sup_{f\in \calF}\|f\|_{\mpd}.
	\end{split}
	\ee
	Combining this with the fact that $\{A(\pi(\la) g): \la \in \La\}$
	is a totally bounded subset of $\mqd$, we use Proposition \ref{pp-tbs} to deduce that 
	$
	\{Af: f\in \calF\}
	$
	is a totally bounded subset of $\mqd$. This implies the desired conclusion that $A\in \calK(\mpd,\mqd)$.
\end{proof}

\begin{remark}
    In the proof of our main theorems, a crucial step is the expansion of $f\in M^p_{\al}$. 
    To this end, we have established some specific decompositions applicable to our proof, such as Propositions \ref{pp-Fourierexp-alpha}
    and \ref{pp-exp-alpha}. In fact, such decompositions are closely related to the so-called Banach frames.
    One can utilize Proposition \ref{pp-exp-alpha} and its proof to construct a Banach frame for $\al$-modulation space.
    See also \cite{Fornasier2007AaCHA} and \cite{BorupNielsen2006JMAA} for the construction of Banach frame
    for $\al$-modulation spaces.
    Finally, we point out that by using an appropriate Banach frame for $\al$-modulation space, one should be able to follow the approach of modulation space to derive the kernel theorems for $\al$-modulation space. 
    However, such an approach hides the essence that the action of an operator on a function space can be transformed into its action on atoms.
\end{remark}

\subsection*{Acknowledgements} Supported by the National Natural Science Foundation of China [12371100] and Fujian Provincial Natural Science Foundation of China [2022J011241].

\bibliographystyle{abbrv}

\end{document}